\DeclareMathAlphabet{\mathpzc}{OT1}{pzc}{m}{en}
\newcommand{\dashint}{\,\ThisStyle{\ensurestackMath{%
			\stackinset{c}{.2\LMpt}{c}{.5\LMpt}{\SavedStyle-}{\SavedStyle\phantom{\int}}}%
		\setbox0=\hbox{$\SavedStyle\int\,$}\kern-\wd0}\int}
\DeclareMathOperator{\card}{Card}
\DeclareMathOperator{\supp}{Supp}
\DeclareMathOperator{\Hol}{Hol}
\renewcommand{\Re}{\mathrm{Re}\,}
\renewcommand{\Im}{\mathrm{Im}\,}
\newcommand{\Supp}[1]{\supp\left( #1\right) }
\newcommand{\Pfaff}{\mathrm{Pf}}
\newcommand{\ee}{\mathrm{e}}
\newcommand{\dd}{\mathrm{d}}
\DeclarePairedDelimiter{\abs}{\lvert}{\rvert}
\DeclarePairedDelimiter{\norm}{\lVert}{\rVert}
\let\originalleft\left
\let\originalright\right
\renewcommand{\left}{\mathopen{}\mathclose\bgroup\originalleft}
\renewcommand{\right}{\aftergroup\egroup\originalright}
\newcommand{\N}{\mathds{N}}
\newcommand{\Z}{\mathds{Z}}
\newcommand{\C}{\mathds{C}}
\newcommand{\R}{\mathds{R}}
\newcommand{\Ff}{\mathfrak{F}}
\newcommand{\Uf}{\mathfrak{U}}
\newcommand{\Bc}{\mathcal{B}}
\newcommand{\Ec}{\mathcal{E}}
\newcommand{\Fc}{\mathcal{F}}
\newcommand{\Hc}{\mathcal{H}}
\newcommand{\Lc}{\mathcal{L}}
\newcommand{\cM}{\mathcal{M}}  
\newcommand{\Nc}{\mathcal{N}}
\newcommand{\Oc}{\mathcal{O}}
\newcommand{\Sc}{\mathcal{S}}
\newcommand{\Ms}{\mathscr{M}}
\newcommand{\meg}{\leqslant}
\newcommand{\Meg}{\geqslant}
\newcommand{\eps}{\varepsilon}
\newcommand{\mi}{\mu}
\title[Carleson measures on Bernstein spaces]{Carleson and Sampling measures on Bernstein spaces on Siegel CR Manifolds}
\date{}
\begin{document}

\theoremstyle{definition}
\newtheorem{deff}{Definition}[section]

\newtheorem{oss}[deff]{Remark}
\newtheorem{ass}[deff]{Assumptions}
\newtheorem{nott}[deff]{Notation}
\newtheorem{exam}[deff]{Examples}

\theoremstyle{plain}
\newtheorem{teo}[deff]{Theorem}
\newtheorem{lem}[deff]{Lemma}
\newtheorem{prop}[deff]{Proposition}
\newtheorem{cor}[deff]{Corollary}
\author[M. Calzi, M. M. Peloso]{Mattia Calzi, 
Marco M. Peloso}

\address{Dipartimento di Matematica, Universit\`a degli Studi di
  Milano, Via C. Saldini 50, 20133 Milano, Italy}
\email{{\tt mattia.calzi@unimi.it}}
\email{{\tt marco.peloso@unimi.it}}

\keywords{Entire functions of exponential type, quadratic CR
  manifolds, Bernstein spaces, Paley--Wiener spaces, Carleson measures, sampling measures.}
\thanks{{\em Math Subject Classification 2020} 32A15, 32A37, 32A50, 46E22.}
\thanks{The authors are members of the
  Gruppo Nazionale per l'Analisi Matematica, la Probabilit\`a e le  loro Applicazioni (GNAMPA) of the Istituto Nazionale di Alta Matematica (INdAM) }
\thanks{The authors are partially supported by the 2020  INdAM--GNAMPA grant  {\em Fractional Laplacians and subLaplacians on Lie groups and trees}.}
 \begin{abstract}
In this paper we introduce and study Carleson and sampling measures on Bernstein spaces on a class of quadratic CR manifold called {\em Siegel CR manifolds}.
These are spaces of entire functions of exponential type whose restrictions to the given Siegel CR manifold are $L^p$-integrable with respect to a natural measure.  For these spaces, we prove necessary and sufficients conditions for a Radon
measure to be a Carleson or a sampling measure.  We also provide sufficient conditions for sampling sequences.
\end{abstract}
\maketitle

\section{Introduction} 

For $\kappa>0$, let $\Ec_\kappa(\C)$ be the space of entire function of
exponential type  at most $\kappa$, that is,
\[
\Ec_\kappa(\C)=\Set{ f\in\Hol(\C)\colon \limsup_{w\to \infty} \frac{\log \abs{f(w)}}{1+\abs{w}}\meg \kappa}  .
\]
For $p\in (0,\infty]$, the classical Bernstein spaces are defined as the spaces of functions in $\Ec_\kappa(\C)$ whose restriction to the real line are in $L^p(\R)$, that is, writing $w=u+iv$ and letting $f_v\colon u\mapsto f(u+iv)$, 
\[
\Bc_\kappa^p =\Set{ f\in\Ec_\kappa(\C)\colon f_0 \in L^p(\R)},
\]
endowed with the norm $\norm{f}_{\Bc_\kappa^p}\coloneqq\norm{f_0}_{L^p}$.
If $f\in\Bc_\kappa^p$, then the Phragm\'en--Lindel\"of principle easily implies that in fact $\abs{f(w)}\le Ce^{\kappa\abs{w}}$ for all $w\in\C$.  The case $p=2$ corresponds to the classical Paley--Wiener space $PW_\kappa$, and in fact also the Bernstein spaces are sometimes reffered to as the Paley--Wiener spaces.  Cf.~\cite{PlancherelPolya,Levin-Lectures,Young} for more information on classical Bernstein spaces.

In~\cite{Bernstein}, an analogue of Bernstein spaces in several
variables is considered, where the role of $\R$ is played by a `Siegel
CR submanifold'.
More precisely, given a complex hilbertian space $E$ of dimension $n$, a real
hilbertian space $F$ of dimension $m$, and a hermitian map $\Phi\colon
E\times E\to F_\C$, we consider the quadratic (or quadric) CR manifold (cf.~\cite{Boggess,PR})
\[
\cM\coloneqq \Set{(\zeta,x+i\Phi(\zeta))\colon \zeta\in E, x\in F}= \Set{(\zeta,z)\in E\times F_\C\colon \rho(\zeta,z)=0},
\]
where 
\[
\Phi(\zeta)\coloneqq \Phi(\zeta,\zeta) \qquad \text{and} \qquad \rho(\zeta,z)\coloneqq \Im z-\Phi(\zeta)
\]
for every $(\zeta,z)\in E\times F_\C$. Then, $\cM$  is a CR manifold of CR-dimension $n$ and real codimension $m$.
The manifold $\cM$ can be canonically identified with a $2$-step nilpotent Lie group $\Nc\coloneqq E\times F$, endowed with the  product
\[
(\zeta,x)(\zeta',x')\coloneqq (\zeta+\zeta', x+x'+2 \Im \Phi(\zeta,\zeta'))
\]
for every $(\zeta,x),(\zeta',x')\in E\times F$.
Then, $\Nc$ acts freely and affinely on the complex space $E\times F_\C$ as 
\[
(\zeta,x)\cdot (\zeta',z')\coloneqq (\zeta+\zeta',x+i\Phi(\zeta)+z'+2 i\Phi(\zeta',\zeta))
\]
In particular, $\Nc$ acts simply transitively on the CR submanifold $\cM=\Nc\cdot (0,0)$. 

Then, given a compact convex subset $K$ of $F'$,  we consider the Bernstein spaces 
\[
\Bc^p_K(\Nc)\coloneqq \Set{f\in \Hol(E\times F_\C)\colon \forall h\in F\:\: \norm{f_h}_{L^\infty(\Nc)}\meg \ee^{H_K(\rho(\zeta,z))}, f_0\in L^p(\Nc)},
\]
endowed with the norm $\norm{f}_{\Bc^p_K(\Nc)}\coloneqq \norm{f_0}_{L^p(\Nc)}$, where 
\[
f_h\colon \Nc\ni (\zeta,x)\mapsto f(\zeta,x+i\Phi(\zeta)+i h)
\]
for every function $f$ on $E\times F_\C$ and for every $h\in F$, while
\[
H_K\colon F\ni h\mapsto \sup_{\lambda\in -K} \langle \lambda,h\rangle \in [-\infty,\infty)
\]
is the support function associated with $K$ (cf.~\cite[Section 4.3]{Hormander} and~\cite[Exercise 9 of \S\ 2]{BourbakiTVS}). 
Cf.~\cite{Bernstein} for other equivalent definitions and other basic properties of $\Bc^p_K(\Nc)$.

As a consequence of~\cite[Theorem 1.10]{Bernstein}, $\Bc^p_K(\Nc)=\Bc^p_{K\cap \overline{\Lambda_+}}(\Nc)$ when $p<\infty$, where
\[
\Lambda_+\coloneqq \Set{\lambda\in F'\colon \forall \zeta\in
E\setminus \Set{0}\:\: \langle \lambda , \Phi(\zeta) \rangle>0}.
\]
It is therefore natural to restrict our attention to the case in which
the open convex cone $\Lambda_+$ is non-empty, in which case $\cM$ is
said to be a `Siegel' CR submanifold of $E\times F_\C$, while $\Nc$,
endowed with the CR structure induced by $\cM$, is said to be a
`Siegel' CR manifold.

We point out that these spaces constitute a natural, highly nontrivial
multidimensional extension of the classical Bernstein spaces. In Section
\ref{sec:2} we discuss their main properties and features,
connections with the classical spaces in one-variable, with other
extenstions in several variables, and present some examples.

\medskip

We propose to investigate Carleson and sampling measures for the spaces $\Bc^p_K(\Nc)$. 
Given a Hausdorff space $X$, and quasi-Banach space $Y$ of functions
on $X$, and $p\in (0,\infty)$,  a $p$-Carleson measure for $Y$ is a
positive Radon measure $\mi$ on $X$ such that $Y\subseteq L^p(\mi)$
continuously. If, in addition, the canonical mapping $Y\to L^p(\mi)$
is an isomorphism onto its image, that $\mi$ is said to be
$p$-sampling for $Y$.
We also recall that, classically, if $Y$ is a reproducing kernel
hilbertian space,
a locally finite sequence of distinct points $(z_j)_{j\in J}\subseteq X$
is called a $2$-sampling sequence (or, simply a sampling sequence
since 
the parameter $p=2$ is understood from the context) if the measure
$\mu:=\sum_{j\in J} c_j^{-1}\delta_{z_j}$ is a $2$-sampling measure for
$Y$. Here, $c_j=k_{z_j}(z_j)$, where $k_{z_j}$ is the reproducing
kernel of $Y$ at the point $z_j$, and
$\delta_z$ denotes the Dirac delta  at $z\in
X$.
Clearly, the notion of $2$-sampling measures for $Y$ is a
generalization of the notion of ($2$-)sampling {\em sequences} for $Y$. 

In the case of the classical Bernstein spaces $\Bc^p_\kappa$,
$p\in(0,\infty]$, 
  $p$-sampling sequences {\em on the real line} were studied by Plancherel and P\'olya
  in~\cite[Nos.\ 40, 44]{PlancherelPolya}.
They proved that 
  for every $p\in(0,\infty]$ and $\kappa'>\kappa$, there exist two constants $C_{p,\kappa,\kappa'},C'_{p,\kappa,\kappa'}>0$ such that, for every $f\in\Bc^p_\kappa$,
\[
C_{p,\kappa,\kappa'}\norm{ f}_{\Bc^p_\kappa} \le \Big( \sum_{n\in\Z}
\abs{f(n\pi/\kappa')}^p \Big)^{1/p} \le C'_{p,\kappa,\kappa'} \norm{f}_{\Bc^p_\kappa} ,
\]
(modification if $p=\infty$).
If $p\in (1,\infty)$,  then one may take $\kappa'=\kappa$,
while if $p=2$, the classical Whittaker--Kotelnikov--Shannon theorem gives that
$C_{2,\kappa,\kappa}=C'_{2,\kappa,\kappa}=\sqrt{\kappa/\pi}$.
General samplings sequences for $\Bc^2_\kappa$ have been studied by
Beurling~\cite{Beurling} for sampling sequences in $\R$, and by Seip
in~\cite[Theorem 10 of Chapter 6]{Seip} for sampling sequences in
$\C$, see also~\cite{OS2}.

\medskip

Carleson measures were introduced by L.\ Carleson in~\cite{Carleson1,Carleson2} in order to study the corona problem in the classical Hardy spaces on the unit disc. The study of these measures has flourished since then, and has been generalized to several different settings, such as weighted Bergman spaces, the Dirichlet space, Fock spaces, model spaces, Bernstein spaces, etc. 
Sampling measures arose as extensions of sampling sequences.
In the context of Bernstein spaces, we mention~\cite{Panejah1,Panejah2,Lin,Kacnelson,Logvinenko,Ortega-Cerda} for sampling measures (and the particular case of dominant sets), and~\cite{PlancherelPolya,Beurling,Landau,Flornes,Seip,OlevskiiUlanovskii,MonguzziPelosoSalvatori} for sampling sequences. Cf.~\cite{Fricainetal} and the references therein for a survey on sampling (and reverse Carleson) measures on various function spaces.
Cf.~\cite{Seip} and the references therein for more information on sampling (and interpolating) sequences for various function spaces.

\medskip

The paper is structured as follows.
In Section~\ref{sec:2} we recall some basic definitions and facts
which will be needed in the following sections. We introduce the
Bernstein spaces in our setting and discuss the known results in the
classical one-dimensional case and the extensions to several variables
present in the literature.  
 In Section~\ref{sec:3}
we shall prove our main results on Carleson measures. After providing
some general sufficient (cf.~Proposition~\ref{prop:11bis}) and
necessary (cf.~Proposition~\ref{prop:1}) conditions, we characterize
the Carleson measures which are supported in $\rho^{-1}(\overline
B_F(0,R))$ for some $R>1$ (cf.~Theorem~\ref{teo:2}). 

In Section~\ref{sec:4}, we consider only measures $\mi$ supported in $\rho^{-1}(\overline B_F(0,R))$ for some $R>1$, and we provide general necessary (cf.~Proposition~\ref{prop:12}) and sufficient (cf.~Theorem~\ref{teo:1}) conditions for $\mi$ to be $p$-sampling for $\Bc^p_K(\Nc)$, we provide a number of sufficient criteria for sampling measures (cf.~Corollaries~\ref{cor:2} and~\ref{cor:3}) and for sampling sequences (cf.~Corollaries~\ref{cor:4} and~\ref{cor:5}). We then extend to this setting the known relation between sampling sequences for the various $\Bc^p_K(\Nc)$ (cf.~\cite[Theorem 2.1]{OlevskiiUlanovskii} and Proposition~\ref{prop:2}), and we specialize to our setting the general Beurling-type necessary conditions for sampling sequences proved in very general context in~\cite{Romeroetal} (cf.~Proposition~\ref{prop:3}).

\section{Bernstein Spaces on Siegel CR Manifolds}\label{sec:2}

We shall denote by $E$ a complex hilbertian space of finite dimension $n$, by $F$ a real hilbertian space of finite dimension $m$, and by $\Phi\colon E\times E\to F_\C$ a hermitian mapping so that the open convex cone
\[
\Lambda_+\coloneqq \Set{\lambda\in F'\colon \forall \zeta \in E\setminus \Set{0}\:\: \langle \lambda, \Phi(\zeta)\rangle>0}
\]
is not empty. Then, $\Phi$ is non-degenerate and $\Lambda_+$ is the interior of the polar of $\Phi(E)$. By the polar of a subset $A$ of $F$, we mean
\[
A^\circ \coloneqq \Set{\lambda\in F'\colon \forall h\in A\:\: \langle\lambda, h\rangle \Meg -1}.
\]
We define the polar of the subsets of $F'$ (identifying $F$ with $F''$) analogously, so that $A^{\circ \circ}$ is the closed convex envelope of $A\cup\Set{0}$ (cf.~\cite[Theorem 1 of Chapter II, \S\ 6, No. 3]{BourbakiTVS}). In particular, if $A\subseteq B\subseteq A^{\circ \circ}$, then $A^\circ =B^\circ$.

We define $\rho\colon E\times F_\C\ni (\zeta,x)\mapsto \Im z-\Phi(\zeta)\in F$ and identify $\Nc\coloneqq E\times F$ with the CR submanifold $\rho^{-1}(0)$ of $E\times F_\C$ (cf.~\cite{Boggess} for more information on (quadratic or quadric) CR manifolds). If we endow $\Nc$ with the $2$-step nilpotent Lie group structure induced by the product
\[
(\zeta,x)(\zeta',x')\coloneqq (\zeta+\zeta',x+x'+2\Im \Phi(\zeta,\zeta'))
\]
for every $(\zeta,x),(\zeta',x')\in \Nc$, then the CR structure of $\Nc$ is left-invariant and generated by the left-invariant vector fields $Z_v$ which induce the Wirtinger derivative $\frac 1 2 (\partial_v-i\partial_v)$, $v\in E$. Explicitly,
\[
Z_v = \frac 1 2 (\partial_v-i\partial_v)+i\Phi(v,\,\cdot\,)\partial_F.
\]
Thus, by a CR function on $\Nc$ we shall means a function $f$ of class $C^1$ such that $\overline{Z_v} f=0$ for every $v\in E$.

We may also endow $E\times F_\C$ with a $2$-step nilpotent Lie group structure induced by the product
\[
(\zeta,z)\cdot (\zeta',z')\coloneqq(\zeta+\zeta',z+z'+2 i \Phi(\zeta',\zeta)), 
\]
so that $\rho^{-1}(0)$ becomes a subgroup of $E\times F_\C$ and the mapping $\Nc\ni (\zeta,x)\mapsto (\zeta,x+i\Phi(\zeta))\in \rho^{-1}(0)$ an isomorphism.
Given a function $f$ on $E\times F_\C$, we define
\[
f_h\colon \Nc\ni (\zeta,x)\mapsto f(\zeta,x+i\Phi(\zeta)+ i h)
\]
for every $h\in F$. Given $(\zeta,z)\in E\times F_\C$, we define $L_{(\zeta,z)}f\coloneqq f((\zeta,z)^{-1}\,\cdot\,)$. We define $L_{(\zeta,x)}g$, for $(\zeta,x)\in \Nc$ and a function $g$ on $ \Nc$, analogously.

Given a compact subset $K$ of $F'$, we define $\Oc_K(\Nc)$ as the space of CR functions $f$ of class $C^\infty$ on $\Nc$ which grow polynomially with every left- (or right-)invariant derivative, such that $\Fc_F[f(\zeta,\,\cdot\,)]$ is supported in $K$ for every $\zeta\in E$, where $\Fc_F$ denotes the Fourier transform on $F$ (cf.~\cite{PWS}).  
We shall denote by $\Hc^d$ the (suitably normalized) $d$-dimensional Hausdorff measure on the relevant metric space, for every $d\in \N$. In particular, $\Hc^{2n+m}$ and $\Hc^{2n+2m}$ are left a right Haar measures on $\Nc$ and $E\times F_\C$, respectively.

\medskip

Given a compact subset $K$ of $F'$, we define 
\[
H_K\colon F\ni h\mapsto \sup_{\lambda\in -K}\langle \lambda, h\rangle \in [-\infty,\infty),
\]
so that $H_K$ is the supporting function of the convex envelope of $K$ (cf.~\cite[Section 4.3]{Hormander} or~\cite[Exercise 9 of \S\ 2]{BourbakiTVS}). In particular, $H_K=-\infty$ if and only if $K=\emptyset$, while $H_K(h)>-\infty$ for every $h\in F$ when $K\neq \emptyset$. In addition, $H_K$ is continuous and subadditive, and may be identified with the Minkowski functional (or gauge) associated with $K^\circ$ when $0\in K$.

If $K$ is a compact convex subset of $F'$, then the mapping $f\mapsto f_0$ induces a bijection of the set of $f\in \Hol(E\times F_\C)$ such that there are $N,C>0$ such that
\[
\abs{f(\zeta,z)}\meg C(1+\abs{\zeta}+\abs{z})^N \ee^{H_K(\rho(\zeta,z))}
\]
for every $(\zeta,z)\in E\times F_\C$, onto  $\Oc_K(\Nc)$ (cf.~\cite[Theorem 3.3]{PWS}). For this reason, given a (not necessarily convex) compact subset $K$ of $F'$, we define $\Hol_K(E\times F_\C)$ as the set of $f\in \Hol(E\times F_\C)$ satisfying the above estimate and such that $f_0\in \Oc_K(\Nc)$.

Notice that $\Oc_K(\Nc)=\Oc_{K\cap \overline{\Lambda_+}}(\Nc)$ for every compact subset $K$ of $F'$, thanks to~\cite[Proposition 5.7]{PWS}, so that we may reduce to considering only $K\subseteq \overline{\Lambda_+}$.

For every $p\in (0,\infty]$ and for every compact subset $K$ of $F'$, we then define
\[
\Bc^p_K(\Nc)\coloneqq \Set{f\in \Hol_K(E\times F_\C)\colon f_0\in L^p(\Nc)},
\]
endowed with the norm $f \mapsto \norm{f_0}_{L^p(\Nc)}$. This
definition agrees with the one given in~\cite{Bernstein} when $K$ is
convex (which is the only case considered therein). As before,
$\Bc^p_K(\Nc)=\Bc^p_{K\cap \overline{\Lambda_+}}(\Nc)$, so that we may
always assume that $K\subseteq \overline{\Lambda_+}$. \medskip

We now illustrate a few examples of our setting.
\begin{exam}
  First of all, the
classical Bernstein spaces $\Bc_\kappa$ considered in the Introduction
correspond to the case $n=0$, that is $E=\Set{0}$,  $m=1$, so that 
$\Nc = \R$, and $K=[-\kappa,\kappa]$.  If $K$ is a  {\em convex} compact
subset of $\R$, then the spaces $\Bc_K(\R)$ are isomorphic to
$\Bc_\kappa$ via the multplication of a suitable
character $\ee^{iaz}$.  For this classical case, see
e.g.~\cite{Levin-Lectures,Young}.   The case of a general compact
subset $K$  was considered in~\cite{Landau}, where necessary
conditions for a sequence to be sampling were established.

For the reader's convenience, we recall that when $K$ is an interval, $K=[a,b]$ for some $a\meg b$,
then the supporting function $H_K$ is given by
\[
H_K(h)= \begin{cases}
-a h & \text{if $h\Meg 0$}\\
-b h & \text{if $h\meg 0$}
\end{cases}
\] 
for every $h\in \R$. 

The case $n=0$, $m>1$ and $K$ a compact parallelotope was studied
in~\cite{PlancherelPolya}.  In all these cases, $\Nc$ is abelian and
the Fourier transform is the classical Euclidean Fourier transform.
\end{exam}

\begin{exam}
  If $n\ge1$ and $m=1$, then $\cM$ is the {\em topological} boundary
  of the Siegel upper half-space $\Set{ (\zeta,z)\in\C^n\times\C\colon    \Im z>\abs{\zeta}^2}$. In this case, $\Nc$ is the $n$-dimensional Heisenberg group $H_n$.  The spaces
$\Bc^2_K(H_n)$, with $K=[0,\tau]$ were introduced and studied
in~\cite{MonguzziPelosoSalvatori}, and the authors established a sharp
sampling theorem for a class of sequences on $\cM$.  

In the general case, the spaces 
$\Bc^p_K(\Nc)$, when
$K$ is convex,
 were introduced and studied in~\cite{Bernstein}.  
\end{exam}

We observe explicitly that $\cM$ is totally real if and only if $n=0$, and a hypersurface if and only if $m=1$. Further, $\Nc$ is abelian if and only if $n=0$.

\section{Carleson Measures}\label{sec:3}

In this section, we study the $q$-Carleson measures for the Bernstein spaces
$\Bc_K^p(\Nc)$, that is, the Radon measures $\mu$ on $\Nc$ such that
$\Bc^p_K(\Nc)$ embeds as a closed subspace of $L^q(\mi)$.

\begin{deff}
 Define $\theta\coloneqq \frac 1 2 $ if $n>0$, and $\theta\coloneqq 1$ if $n=0$.
 
 We denote by $d_\Nc$ a left-invariant $\theta$-homogeneous distance on $\Nc$, with respect to the dilations given by $t\cdot (\zeta,x)\coloneqq (t^{1/2}\zeta,t x)$. We endow $E\times F_\C$ with the  distance
 \[
 d\colon ((\zeta,z),(\zeta',z'))\mapsto  \max(d_\Nc((\zeta,\Re z),(\zeta',\Re z')),\abs{\rho(\zeta,z)-\rho(\zeta',z')}),
 \]
 which is left-invariant and $\theta$-homogeneous with respect to the dilations given by $t\cdot (\zeta,z)\coloneqq (t^{1/2}\zeta,t z)$.
 We denote by $\cM_+(E\times F_\C)$ the space of positive Radon measures on $E\times F_\C$. 
\end{deff}

\begin{deff}
Given $\delta>0$ and $R>1$, by a $(\delta,R)$-lattice on a metric space $X$ we shall mean a family  $(x_j)$ of elements of $X$ such that the balls $B_X(x_j,\delta)$ are pairwise disjoint, while the balls $\overline B_X(x_j,R\delta)$ cover $X$.

By a restricted $(\delta,R)$-lattice on $E\times F_\C$ we shall mean a family $(\zeta_{j,k},z_{j,k})_{j\in J,k\in K}$ of elements of $E\times F_\C$ such that the balls $B((\zeta_{j,k},z_{j,k}),\delta)$ are pairwise disjoint, such that the balls $\overline B((\zeta_{j,k},z_{j,k}), R\delta)$ cover $E\times F_\C$, and such that $\rho(\zeta_{j,k},z_{j,k})$ does not depend on $j\in J$ for every $k\in K$.
\end{deff}

If we define $h_k\coloneqq \rho(\zeta_{j,k},z_{j,k})$, then the balls $B_F(h_k,\delta)$ are pairwise disjoint and the balls $\overline B_F(h_k,R\delta)$ cover $F$ by our choice of $d$.

\begin{deff}
For every $\mi\in \cM_+(E\times F_\C)$ and for every $R>0$, we define
\[
M_{R}(\mi)\colon E\times F_\C\ni (\zeta,z)\mapsto \mi(\overline B((\zeta,z),R))\in \R_+.
\]
For notational convenience, we also define $M_{K,R}(\mi)=M_R(\ee^{H_K\circ \rho}\cdot \mi)$. for every compact subset $K$ of $F'$, so that $M_R(\mi)=M_{\Set{0},R}$.

We define 
\[
L^{p,q}(E\times F_\C)\coloneqq \Set{f\colon E\times F_\C\to \C\colon \text{$f$ is measurable, } \norm{h\mapsto \norm{f_h}_{L^p(\Nc)}}_{L^q(F)}<\infty},
\]
and we define $L^{p,q}_0(E\times F_\C)$ as the closure of the set of measurable step functions in $L^{p,q}(E\times F_\C)$.
We define $\ell^{p,q}(J, K)$ and $\ell^{p,q}_0(J,K)$ analogously, for any two sets $J$ and $K$.
\end{deff}

\begin{lem}
Take a  compact subset $K$ of $F'$, $R>0$ and $\mi \in \cM_+(\Nc)$. Then, $M_{K,R}(\mi)$ is upper semi-continuous.
\end{lem}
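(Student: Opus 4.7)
The plan is to reduce to showing that, for any positive Radon measure $\nu$ on the metric space $(E\times F_\C,d)$, the function $x\mapsto \nu(\overline B(x,R))$ is upper semi-continuous, and then apply this to $\nu\coloneqq \ee^{H_K\circ \rho}\cdot \mi$, where $\mi$ is regarded as a measure on $E\times F_\C$ by extending it by zero from its support (the closed embedded copy of $\Nc$, namely $\rho^{-1}(0)$). If $K=\emptyset$ then $H_K\equiv -\infty$ and $M_{K,R}(\mi)\equiv 0$ is trivially u.s.c., so I may assume $K\neq\emptyset$, in which case $H_K$ is real-valued and continuous on $F$; hence $\ee^{H_K\circ\rho}$ is locally bounded and continuous on $E\times F_\C$, and $\nu$ inherits the Radon property from $\mi$.

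I would then argue sequentially. Fix $x_0\in E\times F_\C$, a sequence $x_n\to x_0$, and $\eps>0$. By outer regularity of $\nu$ choose an open set $U\supseteq \overline B(x_0,R)$ with $\nu(U)\meg \nu(\overline B(x_0,R))+\eps$. The key step is to show that $\overline B(x_n,R)\subseteq U$ for all sufficiently large $n$; given this, $\nu(\overline B(x_n,R))\meg \nu(U)$, and sending $n\to \infty$ and then $\eps\to 0^+$ yields $\limsup_n M_{K,R}(\mi)(x_n)\meg M_{K,R}(\mi)(x_0)$, establishing the upper semi-continuity.

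The inclusion is shown by contradiction, using that closed $d$-balls are compact: the $d_\Nc$-component is a left-invariant homogeneous distance on the finite-dimensional nilpotent Lie group $\Nc$ and so is proper, while the $\rho$-component is the Euclidean distance on the finite-dimensional space $F$. If the inclusion fails along a subsequence, pick $y_n\in \overline B(x_n,R)\setminus U$; by the triangle inequality eventually $y_n\in \overline B(x_0,R+1)$, so a subsequence $y_{n_k}\to y_0$ converges. Passing to the limit in $d(x_{n_k},y_{n_k})\meg R$ gives $y_0\in \overline B(x_0,R)\subseteq U$, while $y_{n_k}\in \complement U$ (a closed set) forces $y_0\in \complement U$, a contradiction. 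The only step not completely automatic is the properness of $d$, which is standard for homogeneous distances on finite-dimensional nilpotent Lie groups; the rest is outer regularity of Radon measures plus a compactness extraction.
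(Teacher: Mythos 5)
Your proof is correct, but it takes a genuinely different route from the paper's. The paper's argument is essentially one line: it chooses Urysohn functions $\varphi_j\in C_c(E\times F_\C)$ with $\chi_{\overline B((0,0),R)}\meg \varphi_j\meg \chi_{B((0,0),(1+2^{-j})R)}$ and observes that $M_{K,R}(\mi)$ is the pointwise infimum over $j$ of the continuous functions $(\zeta,z)\mapsto \int \varphi_j((\zeta,z)^{-1}(\zeta',z'))\,\ee^{H_K(\rho(\zeta',z'))}\,\dd\mi(\zeta',z')$ (the identification of the infimum uses left-invariance of $d$ and continuity from above of the measure along the shrinking open balls $B(\,\cdot\,,(1+2^{-j})R)$), so upper semi-continuity is immediate as an infimum of continuous functions. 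You instead work directly with the measure of the closed ball, invoking outer regularity of the weighted Radon measure $\ee^{H_K\circ\rho}\cdot\mi$ together with the properness of $d$ (compactness of closed balls, a fact the paper itself uses in the proof of its Lemma~3.5) to show that $\overline B(x_n,R)$ is eventually contained in any open neighbourhood of $\overline B(x_0,R)$. Both arguments ultimately rest on the same two ingredients --- finiteness of the weighted measure on balls and compactness of closed $d$-balls --- but yours makes the topological mechanism and the appeal to regularity explicit, at the cost of a compactness extraction, while the paper's is shorter and hides the regularity inside the monotone limit of the approximating integrals. Your treatment of the degenerate case $K=\emptyset$ and of the ambient space for $\mi$ (extending by zero from $\rho^{-1}(0)$, which reconciles the statement's $\mi\in\cM_+(\Nc)$ with the definition of $M_{K,R}$ on $E\times F_\C$) is careful and consistent with how the lemma is used later.
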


\begin{proof}
 Observe that there is a sequence $(\varphi_j)$ of elements of $C_c(E\times F_\C)$ such that $\chi_{\overline B((0,0), R)} \meg \varphi_j\meg \chi_{B((0,0),(1+2^{-j})R)}$ for every $j\in \N$, so that $M_{K,R}(\mi)$ is the pointwise infimum of the continuous functions
 \[
 (\zeta,z)\mapsto \int_{E\times F_\C} \varphi_j((\zeta,z)^{-1}(\zeta',z')) \ee^{H_{K}(\rho(\zeta',z') )}\,\dd \mi(\zeta',z')
 \]
 as $j$ runs through $\N$. 
\end{proof}

\begin{lem}\label{lem:1}
 Fix a compact subset $K$ of $F'$ and $p,q\in (0,\infty]$. Then, for every $\delta,R'>0$ and for every $R>1$ there is a constant $C>0$ such that
 \[
 \frac 1 C \norm{M_{K,R'}(\mi)}_{L^{p,q}(E\times F_\C)}\meg \norm{M_{K,R\delta}(\mi)(\zeta_{j,k},z_{j,k})}_{\ell^{p,q}(J,K)}\meg C
 \norm{M_{K,R'}(\mi)}_{L^{p,q}(E\times F_\C)}
 \]
 for every $\mi \in \cM_+(E\times F_\C)$ and for every  restricted $(\delta,R)$-lattice $(\zeta_{j,k},z_{j,k})_{j,k\in J,K}$ on $E\times F_\C$.
 
 In addition, $M_{K,R'}(\mi)\in L^{p,q}_0(E\times F_\C)$  if and only if $M_{K,R\delta}(\mi)(\zeta_{j,k},z_{j,k})\in \ell^{p,q}_0(J,K)$.
\end{lem}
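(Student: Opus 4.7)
The plan is to reduce to a mixed-norm discretization by exploiting the product structure of $E\times F_\C$. The map $(\zeta,z)\mapsto((\zeta,\Re z),\rho(\zeta,z))$ is a group isomorphism onto the direct product $\Nc\times F$, carrying $d$ to $\max(d_\Nc,\abs{\cdot}_F)$ and Haar measure to a product measure, while $\ee^{H_K\circ\rho}$ depends only on the $F$-coordinate of the integration variable. Thus $L^{p,q}$ becomes the mixed norm $L^p_n L^q_h$, and a restricted $(\delta,R)$-lattice becomes $(n_{j,k},h_k)$ with $(h_k)_k$ a $(\delta,R)$-lattice in $F$ and the balls $B_\Nc(n_{j,k},\delta)$ pairwise disjoint. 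As a preliminary step I would establish the change-of-scale equivalence $\norm{M_{K,R_1}(\mi)}_{L^{p,q}}\asymp\norm{M_{K,R_2}(\mi)}_{L^{p,q}}$ for any $R_1,R_2>0$ (and the analogous discrete statement), obtained by covering $\overline B(0,R_1)$ with finitely many translates $\overline B(v_\ell,R_2/2)$ and using that right translation by $v_\ell$ preserves $L^{p,q}$ (unimodularity of $\Nc$ and translation-invariance of Lebesgue on $F$).

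For the upper bound, setting $a_{j,k}:=M_{K,R\delta}(\mi)(w_{j,k})$, I would note that for $(n,h)\in B_\Nc(n_{j,k},\delta)\times B_F(h_k,\delta)$ the inclusion $\overline B(w_{j,k},R\delta)\subset\overline B((n,h),(R+2)\delta)$ gives $a_{j,k}\le M_{K,(R+2)\delta}(\mi)(n,h)$. Averaging in $n$ over the disjoint balls $B_\Nc(n_{j,k},\delta)$ converts $a_{j,k}^p$ into an inner $L^p_n$-integral of $M_{K,(R+2)\delta}(\mi)^p$; I then raise to $q/p$, average in $h$ over the disjoint $B_F(h_k,\delta)$, sum in $k$, and pass from $(R+2)\delta$ to $R'$ via the change-of-scale lemma.

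For the lower bound, I would fix a measurable partition $\Nc\times F=\bigsqcup_{(j,k)}A_{j,k}$ with $A_{j,k}\subset\overline B_\Nc(n_{j,k},R\delta)\times\overline B_F(h_k,R\delta)$ (afforded by the lattice covering property) and apply the pointwise bound $M_{K,R'}(\mi)(n,h)\le M_{K,R'+R\delta}(\mi)(n_{j,k},h_k)$ on $A_{j,k}$. Integrating in $n$, raising to $q/p$, and integrating in $h$ yields the discrete norm at scale $R'+R\delta$, which the discrete change-of-scale lemma reduces to the norm of $(a_{j,k})$. The main obstacle is that $(A_{j,k})$ is \emph{not} a product, so the outer $h$-integration introduces overlap between the $k$-indices; this is absorbed by the uniform bound $\#\{k:h\in\overline B_F(h_k,R\delta)\}\le N$ coming from the disjointness of $B_F(h_k,\delta)$.

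Finally, the equivalence $M_{K,R'}(\mi)\in L^{p,q}_0\iff(M_{K,R\delta}(\mi)(w_{j,k}))\in\ell^{p,q}_0$ would follow by applying the same estimates to tails: given $\eps>0$, a finite $S\subset J\times K$ satisfying $\norm{(a_{j,k})_{(j,k)\notin S}}_{\ell^{p,q}}<\eps$ corresponds, via the compact $R\delta$-neighborhood $X\subset E\times F_\C$ of $\{w_{j,k}:(j,k)\in S\}$, to $\norm{M_{K,R'}(\mi)\chi_{X^c}}_{L^{p,q}}<C\eps$ and vice versa, by running the upper- and lower-bound estimates localized to the complements of $S$ and $X$ respectively.
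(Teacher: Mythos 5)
Your proposal is correct and follows essentially the same route as the paper's proof: a change-of-scale lemma for the continuous norms, an upper bound obtained by averaging $M_{K,R\delta}(\mi)(\zeta_{j,k},z_{j,k})$ over the pairwise disjoint balls of radius $\delta$, a lower bound via the covering property combined with bounded overlap and a discrete change-of-scale, and a tail/truncation argument for the $L^{p,q}_0$ statement. The only point to state more carefully is the \emph{discrete} change-of-scale equivalence, which does not follow from right-translation invariance (translates of lattice points are not lattice points) but from the bounded-overlap counting argument you already invoke for the $h$-overlap: each ball $\overline B((\zeta_{j,k},z_{j,k}),R_1)$ meets at most $N$ of the balls $\overline B((\zeta_{j',k'},z_{j',k'}),R_2)$, with $N$ independent of $(j,k)$, and each index is charged at most $N$ times --- which is exactly how the paper proves this step.
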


The proof is analogous to those of~\cite[Lemmas 2.9 and 2.12]{NanaSehba} and~\cite[Lemma 5.1]{CP2}.

\begin{proof}
 Since $M_{K,R''}(\mi)=M_{R''}(\ee^{H_K\circ \rho}\mi)$ for every $R''>0$, we may assume that $K=\Set{0}$.
 
 \textsc{Step I.}  Let us first prove that, for every $R''>R'$ there is a constant $C_1>0$ such that
 \[
 \norm{M_{R'}(\mi)}_{L^{p,q}(E\times F_\C)}\meg \norm{M_{R''}(\mi)}_{L^{p,q}(E\times F_\C)}\meg C_1
 \norm{M_{R'}(\mi)}_{L^{p,q}(E\times F_\C)}
 \]
 for every $\mi \in \cM_+(E\times F_\C)$. The first inequality is obvious.  Then, observe that, since $d$ is homogeneous and left-invariant, $\overline B((\zeta,z),R'')$ is compact for every $(\zeta,z)\in E\times F_\C$. Then, there are $(\zeta_1,z_1),\dots, (\zeta_k, z_k)\in E\times F_\C$ such that $\overline B((0,0),R'')\subseteq \bigcup_{j=1}^k \overline B((\zeta_j,z_j),R')$, so that, by left-invariance,
 \[
 M_{R''}(\mi)\meg \sum_{j=1}^k M_{R'}(\mi)(\,\cdot\,(\zeta_j,z_j))\in L^{p,q}(E\times F_\C)
 \]
 for every $\mi \in \cM_+(E\times F_\C)$.
 Hence, 
 \[
 \norm{M_{R''}(\mi)}_{L^{p,q}(E\times F_\C)}\meg k^{\max(1/p,1/q,1)}
 \norm{M_{R'}(\mi)}_{L^{p,q}(E\times F_\C)}
 \]
 for every $\mi \in \cM_+(E\times F_\C)$.
 
 \textsc{Step II.} Let $(\zeta_{j,k},z_{j,k})_{j\in J,k\in K}$ be a restricted $(\delta,R)$-lattice on $E\times F_\C$. Observe that
 \[
 M_{R\delta}(\mi)(\zeta_{j,k},z_{j,k})\meg  M_{(R+1)\delta}(\mi)(\zeta,z)
 \]
 for every $(\zeta,z)\in B((\zeta_{j,k},z_{j,k}),\delta)$, for every $j\in J$, and for every $k\in K$, so that 
 \[
 \norm{M_{R\delta}(\mi)(\zeta_{j,k},z_{j,k})}_{\ell^p(J)}\meg \Hc^{2n+m}(B_\Nc((0,0),\delta))^{-1/p} \norm{[M_{(R+1)\delta}(\mi)]_h}_{L^p(\Nc)}
 \]
 for every $k\in K$ and for every $h\in B_F(h_k,\delta)$, since the balls $B((\zeta_{j,k},z_{j,k}),\delta)$ are pairwise disjoint. Then,
 \[
 \norm{M_{R\delta}(\mi)(\zeta_{j,k},z_{j,k})}_{\ell^{p,q}(J,K)}\meg \Hc^{2n+m}(B_\Nc((0,0),\delta))^{-1/p} \Hc^m(B_F(0,\delta))^{-1/q} \norm{M_{(R+1)\delta}(\mi)}_{L^{p,q}(E\times F_\C)}.
 \]
 By \textsc{step I}, this proves that there is a constant $C_2>0$ such that
 \[
 \norm{M_{R\delta}(\mi)(\zeta_{j,k},z_{j,k})}_{\ell^{p,q}(J,K)}\meg C_2 \norm{M_{R'}(\mi)}_{L^{p,q}(E\times F_\C)}
 \]
 for every $\mi \in \cM_+(E\times F_\C)$.
 
 \textsc{Step III.}  Take a  restricted $(\delta,R)$-lattice $(\zeta_{j,k},z_{j,k})_{j\in J,k\in K}$   on $E\times F_\C$. 
 Let us first prove that there is a constant $C_3>0$ such that
 \[
 \norm{M_{(R+1)\delta}(\mi)(\zeta_{j,k},z_{j,k})}_{\ell^{p,q}(J,K)}\meg C_3 \norm{M_{R\delta}(\mi)(\zeta_{j,k},z_{j,k})}_{\ell^{p,q}(J,K)}
 \]
 for every $\mi\in \cM_+(E\times F_\C)$. Indeed, for every $(j,k)\in J\times K$ define 
 \[
 J_{j,k}\coloneqq \Set{(j',k')\in J\times K\colon \overline B_\Nc((\zeta_{j',k'},z_{j',k'}),R\delta)\cap \overline B((\zeta_{j,k},z_{j,k}),(R+1)\delta)\neq \emptyset },
 \]
 and observe that there is $N\in \N$, \emph{depending only on $\delta$ and $R$}, such that $\card(J_{j,k})\meg N$ for every $(j,k)\in J\times K$, and such that each $(j',k')\in J\times K$ is contained in at most $N$ of the sets $J_{j,k}$. For every $k\in K$,  define $h_k\coloneqq \rho(\zeta_{j,k},z_{j,k})$ for some/every $j\in J$, and set $K_k\coloneqq \Set{k'\in K\colon \overline B_F(h_{k'},R\delta)\cap  \overline B_F(h_k,(R+1)\delta)\neq \emptyset }$. Observe that we may assume that $\card(K_k)\meg N$ for every $k\in K$, and that each $k'\in K$ is contained in at most $N$ of the sets $K_k$.
 
 Then,
 \[
 M_{(R+1)\delta}(\mi)(\zeta_{j,k},z_{j,k})\meg \sum_{(j',k')\in J_{j,k}} M_{R\delta}(\mi)(\zeta_{j',k'},z_{j',k'})
 \]
 so that
 \[
 \norm{M_{(R+1)\delta}(\mi)(\zeta_{j,k},z_{j,k})}_{\ell^p(J)}\meg \sum_{k'\in K_k} N^{\max(1,1/p)}\norm{M_{R\delta}(\mi)(\zeta_{j,k'},z_{j,k'})}_{\ell^p(J)},
 \]
 whence 
 \[
 \norm{M_{(R+1)\delta}(\mi)(\zeta_{j,k},z_{j,k})}_{\ell^{p,q}(J,K)}\meg  N^{\max(1,1/p)+\max(1,1/q)}\norm{M_{R\delta}(\mi)(\zeta_{j,k},z_{j,k})}_{\ell^{p,q}(J,K)}
 \]
 and our assertion.
 
 Then, observe that
 \[
 M_{\delta}(\mi)(\zeta,z)\meg M_{(R+1)\delta}(\mi)(\zeta_{j,k},z_{j,k})
 \]
 for every $(\zeta,x)\in \overline  B((\zeta_{j,k},z_{j,k}),R\delta)$ and for every $(j,k)\in J\times K$, so that
 \[
 \norm{M_{\delta}(\mi)}_{L^p(\mi)}\meg \Hc^{2n+m}(B_\Nc((0,0),R\delta))^{1/p} \Hc^m(B_F(0,R\delta))^{1/q} \norm{M_{(R+1)\delta}(\mi)(\zeta_{j,k},z_{j,k})}_{\ell^{p,q}(J,K)}.
 \]
 By \textsc{step I}, this proves that there is a constant $C_4>0$ such that
 \[
  \norm{M_{R'}(\mi)}_{L^{p,q}(E\times F_\C)}\meg C_4 \norm{M_{R\delta}(\mi)(\zeta_{j,k},z_{j,k})}_{\ell^{p,q}(J,K)}
 \]
 for every $\mi \in \cM_+(E\times F_\C)$. This completes the proof of the first assertion.
 
 \textsc{Step IV.} The second assertion follows from the first one, approximating $M_{R_1}(\mi)$ with $M_{R_1}(\chi_{B((0,0),R_2)}\cdot \mi)$, for $R_2\to +\infty$, for every $R_1>0$.
\end{proof}

\begin{deff}
 For every $p\in (0,\infty]$, we define $p'\coloneqq \max(1,p)'$, so that $p'=\infty$ if $p\meg 1$, while $\frac 1 p+\frac{1}{p'}=1$ otherwise.
\end{deff}

\begin{prop}\label{prop:11bis}
Take  a compact subset $K$ of $\overline{\Lambda_+}$, $p,q\in (0,\infty]$, with $q<\infty$, and $R>0$.
Then, there is a constant $C>0$ such that
\[
\norm{f}_{L^q(\mi)}\meg C\norm{M_{q K, 1}(\mi)}_{L^{(p/q)',1}(E\times F_\C)}^{1/q}   \sup_{h\in F}\ee^{-H_{K}(h)}\norm{ (\chi_{B(\supp{\mi},R')} f)_h   }_{L^p(\Nc)}
\]
for every $\mi \in \cM_+(E\times F_\C)$. 

In particular, if $M_{q K,1}(\mi)\in L^{(p/q)',1}(E\times F_\C)$ (resp.\ $M_{q K,1}(\mi)\in L^{(p/q)',1}_0(E\times F_\C)$), then $\mi$ is a (resp.\ compact) $q$-Carleson measure for $\Bc^p_K(\Nc)$.
\end{prop}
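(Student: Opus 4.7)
The plan is to prove the stronger integral inequality $\int|f|^q \,d\mu \le C^q A^q\|M_{qK,1}(\mu)\|_{L^{(p/q)',1}}$, where $A := \sup_{h\in F} e^{-H_K(h)}\|(\chi_{B(\supp\mu, R')} f)_h\|_{L^p(\Nc)}$; the stated inequality follows by extracting $q$-th roots. I would discretise on a restricted $(\delta, R_0)$-lattice $(\zeta_{j,k}, z_{j,k})_{j\in J, k\in K}$ on $E\times F_\C$, with $\delta$ so small that every cell $B_{j,k} := \overline B((\zeta_{j,k}, z_{j,k}), R_0\delta)$ hitting $\supp\mu$ has its slight enlargement $B'_{j,k} := \overline B((\zeta_{j,k}, z_{j,k}), (R_0+1)\delta)$ contained in $B(\supp\mu, R')$. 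Set $\tilde f := \chi_{B(\supp\mu, R')} f$ and $h_k := \rho(\zeta_{j,k}, z_{j,k})$; cells disjoint from $\supp\mu$ contribute nothing to $\int|f|^q\,d\mu$.

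Plurisubharmonicity of $|f|^p$ gives $\|f\|_{L^\infty(B_{j,k})}^p \le C\int_{B'_{j,k}}|\tilde f|^p\,dV$ on each relevant cell, and the change of variables $(\zeta', z')\mapsto(\zeta',\Re z',\rho(\zeta',z'))$ yields
\[
u_{j,k} := \int_{B'_{j,k}}|\tilde f|^p\,dV = \int_{|h' - h_k|\le (R_0+1)\delta}\int_{B_\Nc((\zeta_{j,k},\Re z_{j,k}),(R_0+1)\delta)}|\tilde f_{h'}|^p\,d\Hc^{2n+m}\,dh'.
\]
Using $\|\tilde f_{h'}\|_{L^p(\Nc)} \le e^{H_K(h')}A$ and continuity of $H_K$ on the bounded range of $h'$ gives $u_{j,k} \le C A^p e^{pH_K(h_k)}$ for each $(j,k)$. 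Critically, for fixed $k$ the centres $(\zeta_{j,k},\Re z_{j,k})$ form a lattice in $\Nc$, so the $\Nc$-balls in the inner integral have bounded multiplicity of overlap, yielding the sharper
\[
\sum_{j\in J} u_{j,k} \le C A^p e^{pH_K(h_k)}.
\]

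Combining $\int_{B_{j,k}}|f|^q \,d\mu \le \mu(B_{j,k})\|f\|_{L^\infty(B_{j,k})}^q \le C\mu(B_{j,k})u_{j,k}^{q/p}$ with H\"older in $j$ (conjugate exponents $(p/q)'$ and $p/q$ when $p > q$; when $p \le q$ the paper's convention puts $(p/q)' = \infty$, and one combines the $(\infty,1)$ H\"older pair with the power-mean bound $\sum_j u_{j,k}^{q/p} \le (\sum_j u_{j,k})^{q/p}$) gives
\[
\sum_j \mu(B_{j,k})u_{j,k}^{q/p} \le C A^q e^{qH_K(h_k)}\|\mu(B_{j,k})\|_{\ell^{(p/q)'}(J)}.
\]
Since continuity of $H_K$ on $B_{j,k}$ yields $e^{qH_K(h_k)}\mu(B_{j,k}) \asymp M_{qK,R_0\delta}(\mu)(\zeta_{j,k},z_{j,k})$, summing in $k$ and invoking Lemma~\ref{lem:1} converts the resulting discrete $\ell^{(p/q)',1}$-norm into $C'\|M_{qK,1}(\mu)\|_{L^{(p/q)',1}(E\times F_\C)}$, completing the bound. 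The compact-Carleson ``in particular'' then follows from the parallel $L^{p,q}_0/\ell^{p,q}_0$ portion of Lemma~\ref{lem:1} together with a standard truncation argument. The main obstacle is keeping the H\"older step uniform across the regimes $p > q$, $p = q$, and $p < q$ under the paper's convention $p' = \max(1,p)'$, and ensuring that the finite-overlap refinement is applied only in the $\Nc$-direction for fixed $k$, so that the sharper bound on $\sum_j u_{j,k}$ survives the integration over $h'$.
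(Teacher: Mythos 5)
Your proof is correct and follows essentially the same route as the paper's: a restricted-lattice discretization, pointwise control of $|f|$ on each cell via subharmonicity of a power of $|f|$ together with bounded overlap of the $\Nc$-balls at fixed $k$, a H\"older step with exponents $p/q$ and $(p/q)'$ producing the mixed $\ell^{(p/q)',1}(J,K)$-norm of the lattice values of $M_{qK,\cdot}(\mu)$, and Lemma~\ref{lem:1} to convert that into $\norm{M_{qK,1}(\mu)}_{L^{(p/q)',1}}$, with the compact case handled by truncation exactly as in the paper. The only cosmetic differences are that the paper uses $|f|^{\min(1,p)}$ and phrases the H\"older step as a single mixed-norm inequality on $J\times K$ (via the quantity $(S_+f)_{j,k}$ and its $\ell^{p,\infty}$-norm) rather than arguing at fixed $k$ and then summing.
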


Here, we set $B(\supp{\mi},R)\coloneqq \bigcup_{(\zeta,x)\in \supp{\mi}} B((\zeta,x),R)=(\supp{\mi}) B((0,0),R)$.

The proof is based on standard techniques, cf., e.g.,~\cite{Luecking2,NanaSehba,CP2}.

\begin{proof}
 We may assume that $K$ is convex and non-empty.
 
\textsc{Step I.} Let $(\zeta_{j,k},z_{j,k})_{j\in J,k\in K}$ be a restricted $(1/4,4)$-lattice on $E\times F_\C$, and define
\[
(S_+ f)_{j,k}\coloneqq \max_{\overline B((\zeta_{j,k},x_{j,k}),1)} \ee^{-H_{K}\circ \rho}\chi_{\supp{\mi}}\abs{f}\in \R_+
\]
for every $f\in \Bc^p_K(\Nc)$.
Then,
\[
\begin{split}
 \norm{f}_{L^q(\mi)}&\meg \norm{(S_+ f)_{j,k} M_{q K,1}(\mi)(\zeta_{j,k},z_{j,k})^{1/q}  }_{\ell^q(J\times K)} \\
 &\meg \norm{S_+ f}_{\ell^{p,\infty}(J,K)} \norm{M_{q K,1}(\mi)(\zeta_{j,k},z_{j,k})^{1/q}  }_{\ell^{s,q}(J,K)}\\
 &\meg \norm{S_+ f}_{\ell^{p,\infty}(J,K)} \norm{M_{q K,1}(\mi)(\zeta_{j,k},z_{j,k})  }_{\ell^{ s/q,1}(J)}^{1/q},
\end{split}
\]
where $s\coloneqq \frac{pq}{(p-q)_+}=q \left(\frac p q\right)'$. Thanks to Lemma~\ref{lem:1}, it will then suffice to prove that there is a constant $C_1>0$ such that
\[
\norm{S_+ f}_{\ell^{p,\infty}(J,K)}\meg C_1\sup_{h\in F}\ee^{-H_{K}(h)}\norm{ (\chi_{B(\supp{\mi},R)} f)_h   }_{L^p(\Nc)}
\]
for every $f\in \Bc^p_K(\Nc)$. 

Observe first that
\[
\frac{1}{C_2}\ee^{-H_{K}(h')}\meg \ee^{-H_{K}(h)}\meg C_2 \ee^{-H_{K}(h')}
\]
for every $h,h'\in F$ such that $\abs{h-h'}\meg 1+R$, where $C_2\coloneqq \sup_{\abs{h}\meg 1+R} \ee^{H_{K}(h)}$.
Now, observe that by the subharmonicity of $\abs{f}^{\min(1,p)}$,
\[
\abs{f(0,0)}^{\min(1,p)}\meg \dashint_{B_{E\times F_\C}((0,0),R')} \abs{f}^{\min(1,p)}\,\dd \Hc^{2n+2m}
\]
for every $R'>0$ and for every $f\in \Hol(E\times F_\C)$, 
where $B_{E\times F_\C}((0,0),R')$ denotes the Euclidean ball of centre $(0,0)$ and radius $R'$  in $E\times F_\C$. Hence, choosing $R'>0$ so that $B_{E\times F_\C}((0,0),R')\subseteq B((0,0),R)$, and using the left-invariance of $\Hc^{2n+2m}$, we infer that there is a constant $C_3>0$ such that
\[
\abs{f(\zeta,z)}^{\min(1,p)}\meg C_3 \int_{B((\zeta,z),R)} \abs{f}^{\min(1,p)}\,\dd \Hc^{2n+2m}=C_3 \int_{B_F(\rho(\zeta,z),R)} \int_{B((\zeta,\Re z),R)} \abs{f_h}^{\min(1,p)}\,\dd \Hc^{2n+m}\,\dd h
\]
for every $(\zeta,z)\in E\times F_\C$ and for every $f\in \Hol(E\times F_\C)$.
Hence, there is a constant $C_4>0$ such that
\[
(S_+ f)_{j,k}^{\min(1,p)}\meg C_4 \ee^{-H_{K}(h_k)}\int_{B_F(h_k,1+R)} \int_{B_\Nc((\zeta_{j,k},\Re z_{j,k}),1+R)} \abs{(\chi_{B(\supp{\mi},R)}f)_h}^{\min(1,p)}\,\dd \Hc^{2n+m}\,\dd h
\]
for every $(j,k)\in J\times K$. 
Using the finite intersection property of the balls $B_\Nc((\zeta_{j,k},z_{j,k}),1+R)$, we then infer that there is a constant $C_5>0$ such that 
\[
\begin{split}
 \norm{(S_+ f)_{j,k}}_{\ell^p(J)}&\meg  C_5 \ee^{-H_{K}(h_k)}\int_{B_F(h_k,1+R)} \norm{(\chi_{B(\supp{\mi},R)}f)_h}_{L^p(\Nc)}\,\dd h\\
 &\meg  C_5 C_2 \Hc^m(B_F(0,R+1)) \sup_{h\in F} \ee^{-H_{K}(h)} \norm{ \chi_{B(\supp{\mi},R)} f_h}_{L^p(\Nc)}
\end{split}
\]
for every $k\in K$, whence the first assertion.

\textsc{Step II.}  Now, assume that  $M_{q K,1}(\mi)\in L^{(p/q)',1}(\Nc)$. By means of  \textsc{step I} and~\cite[Theorem 1.7]{Bernstein}, we see that
\[
\norm{f}_{L^q(\mi)}\meg C\norm{M_{q K, 1}(\mi)}_{L^{(p/q)',1}(E\times F_\C)}^{1/q}   \sup_{h\in F}\ee^{-H_{K}(h)}\norm{ f_h   }_{L^p(\Nc)}=C \norm{M_{q K, 1}(\mi)}_{L^{(p/q)',1}(E\times F_\C)}^{1/q}  \norm{f_0}_{L^p(\Nc)}
\]
for every $f\in \Bc_K^p(\Nc)$, so that $\mi$ is a $q$-Carleson measure for $\Bc_K^p(\Nc)$.

\textsc{Step III.} Finally, assume that $M_{q K,1}(\mi)\in L^{(p/q)',1}_0(\Nc)$. Then, \textsc{step II} shows that the mappings 
\[
\iota_k\colon \Bc^p_K(\Nc)\ni f \mapsto \chi_{B((0,0),k+1)} f\in L^q(\mi)
\]
converge to the inclusion mapping $\Bc^p_K(\Nc)\to L^q(\mi)$ in $\Lc(\Bc^p_K(\Nc); L^q(\mi))$, so that it will suffice to show that the $\iota_k$ are compact. Now, observe that $\Bc^p_K(\Nc)$ embeds continuously into $\Hol(E\times F_\C)$ (cf.~\cite[Corollary 3.3]{Bernstein}), and that the mapping $\Hol(E\times F_\C)\ni f \mapsto \chi_{B((0,0),k+1)} f\in L^q(\mi)$ is clearly compact for every $k\in \N$. The assertion follows.
\end{proof}

\begin{prop}\label{prop:1}
Take  a  compact subset $K$ of $\overline{\Lambda_+}$, $p,q\in (0,\infty]$, with $q<\infty$, and  $\mi\in \cM_+(E\times F_\C)$. Assume that $\Bc^p_K(\Nc)\neq \Set{0} $. 
Then, there is $R>0$ such that, if  $\mi$ is a $q$-Carleson measure for $\Bc^p_K(\Nc)$, then the mapping 
\[
(\zeta,z)\mapsto \ee^{- q H_K(-\rho(\zeta,z))}M_{R}(\mi)(\zeta,z)
\]
is bounded.

If, in addition, $K$ has a non-empty interior and $\mi$ is a compact $q$-Carleson measure for $\Bc^p_K(\Nc)$, then the above function vanishes at the point at infinity.
\end{prop}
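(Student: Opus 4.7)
The plan is to test the Carleson inequality against test functions in $\Bc^p_K(\Nc)$ built from a single nonzero element by combining an $\Nc$-left-translation with an imaginary shift in the $F$-direction; the construction yields, for every prescribed base point $(\zeta_0,z_0)\in E\times F_\C$, a test function concentrated near $(\zeta_0,z_0)$ whose $\Bc^p_K(\Nc)$-norm is controlled by $\ee^{H_K(-\rho(\zeta_0,z_0))}$.

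By hypothesis there is a nonzero $f_*\in\Bc^p_K(\Nc)$; by holomorphy, fix $c_0,R>0$ and a point $(\zeta_*,z_*)$ with $\abs{f_*}\Meg c_0$ on $\overline B((\zeta_*,z_*),2R)$. Given $(\zeta_0,z_0)$, set $h_0\coloneqq\rho(\zeta_*,z_*)-\rho(\zeta_0,z_0)$ and pick $\ell_0\in\Nc$ sending $(\zeta_*,\Re z_*+i\Phi(\zeta_*))\in\cM$ to $(\zeta_0,\Re z_0+i\Phi(\zeta_0))\in\cM$; define
\[
g(\zeta,z)\coloneqq f_*\bigl(\ell_0^{-1}\cdot(\zeta,z+ih_0)\bigr).
\]
Since $\Nc$-translations commute with imaginary shifts and preserve $\rho$, the map $(\zeta,z)\mapsto\ell_0^{-1}\cdot(\zeta,z+ih_0)$ is a $d$-isometry carrying $(\zeta_0,z_0)$ to $(\zeta_*,z_*)$, so $\abs{g}\Meg c_0$ on $\overline B((\zeta_0,z_0),2R)$. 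The same commutation identifies the trace $(g)_0$ with an $\Nc$-translate of $(f_*)_{h_0}$, whence $\norm{g}_{\Bc^p_K(\Nc)}=\norm{(f_*)_{h_0}}_{L^p(\Nc)}$; combining the estimate $\norm{(f_*)_{h_0}}_{L^p(\Nc)}\meg C\ee^{H_K(h_0)}\norm{f_*}_{\Bc^p_K(\Nc)}$ (cf.~\cite[Theorem 1.7]{Bernstein}, used also in the proof of Proposition~\ref{prop:11bis}) with the subadditivity $H_K(h_0)\meg H_K(\rho(\zeta_*,z_*))+H_K(-\rho(\zeta_0,z_0))$ yields $\norm{g}_{\Bc^p_K(\Nc)}\meg C_1\ee^{H_K(-\rho(\zeta_0,z_0))}$ for a constant $C_1$ depending only on $f_*$. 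Applying the Carleson inequality for $\mi$ to $g$ then gives
\[
c_0^q M_R(\mi)(\zeta_0,z_0)\meg \int\abs{g}^q\,\dd\mi\meg C_\mi\norm{g}_{\Bc^p_K(\Nc)}^q\meg C_\mi C_1^q\,\ee^{qH_K(-\rho(\zeta_0,z_0))},
\]
which is the uniform boundedness asserted in the first part of the statement.

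For the compactness statement, suppose $K$ has nonempty interior, $\mi$ is a compact $q$-Carleson measure, and, for contradiction, that there exist $\eps>0$ and $p_n=(\zeta_n,z_n)\to\infty$ in $E\times F_\C$ with $\ee^{-qH_K(-\rho(p_n))}M_R(\mi)(p_n)\Meg\eps$. The normalized test functions $\hat g_n\coloneqq g_n/(C_1\ee^{H_K(-\rho(p_n))})$ form a bounded sequence in $\Bc^p_K(\Nc)$ with $\norm{\hat g_n}_{L^q(\mi)}^q\Meg(c_0/C_1)^q\eps$; compactness of $\iota\colon\Bc^p_K(\Nc)\to L^q(\mi)$ extracts an $L^q(\mi)$-convergent subsequence, and the plan is to identify its limit as $0$ to obtain the contradiction. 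The route to $0$ is to prove that $\hat g_n\to 0$ locally uniformly on $E\times F_\C$, by combining (i) the pointwise estimate inherited from $f_*\in\Hol_K(E\times F_\C)$, (ii) Plancherel--P\'olya-type bounds on $\abs{f_*}$ by local $L^p$ integrals to handle escape in the $\Nc$-direction, and (iii) the hypothesis that $K$ has nonempty interior, which makes $H_K$ coercive in every direction of $F$ and ensures that the normalization $\ee^{-H_K(-\rho(p_n))}$ dominates the pointwise growth of $g_n$ as $\rho(p_n)\to\infty$; coupled with the standard rephrasing of compact Carleson as uniform smallness of tails outside large compacts, this yields $\norm{\hat g_n}_{L^q(\mi)}\to 0$, contradicting the lower bound. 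The principal obstacle is step~(iii): verifying $\mi$-a.e.\ (or locally uniform) decay of $\hat g_n$ when the escape to infinity occurs in the $F$-direction, which is where the nonempty interior hypothesis on $K$ is used in an essential way.
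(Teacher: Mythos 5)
Your proof of the first assertion is correct and is essentially the paper's own argument: the paper uses a single test function $f^{(\zeta,z)}=f((\zeta,z)^{-1}\,\cdot\,)$ translated by the full group $E\times F_\C$, which packages your $\Nc$-translation $\ell_0$ and imaginary shift $ih_0$ into one step, and then invokes the same norm bound $\norm{f_{-\rho(\zeta,z)}}_{L^p(\Nc)}\meg \ee^{H_K(-\rho(\zeta,z))}\norm{f_0}_{L^p(\Nc)}$ from \cite[Theorem 1.7]{Bernstein}.

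For the second assertion, however, your argument stops exactly where the real work begins. You reduce matters to showing that the normalized translates $\hat g_n=\ee^{-H_K(-\rho(p_n))}g_n$ tend to $0$ locally uniformly, and you yourself flag the case where $p_n$ escapes to infinity in the $F$-direction as an unresolved ``principal obstacle''. This is a genuine gap, and it cannot be closed with the test function you are using: an arbitrary nonzero $f_*\in\Bc^p_K(\Nc)$ may satisfy $\abs{f_*}\asymp \ee^{H_K\circ\rho}$ (and, when $p=\infty$, need not decay at all along $\Nc$), in which case $\ee^{-H_K(-\rho(p_n))}\abs{g_n(w)}\meg \ee^{H_K(\rho(w)-\rho(p_n))-H_K(-\rho(p_n))}\meg \ee^{H_K(\rho(w))}$ is merely bounded, not small, as $\abs{\rho(p_n)}\to\infty$. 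The paper's fix is to \emph{change the test function} at this stage: using the nonempty-interior hypothesis it picks a nonzero $f\in\Bc^{\min(1,p)}_{K'}(\Nc)\subseteq\Bc^p_K(\Nc)$ with $K'$ a closed ball contained in $K$. Then $H_K-H_{K'}$ grows linearly in $\abs{h}$, so the normalization $\ee^{-H_K(-\rho(p_n))}$ beats the pointwise bound $\abs{f}\lesssim\ee^{H_{K'}\circ\rho}$ when $\rho(p_n)\to\infty$, while membership in $\Bc^{\min(1,p)}$ (together with \cite[Theorem 3.2]{Bernstein}) forces decay when the escape is in the $\Nc$-direction; this yields $\ee^{-H_K(-\rho(\zeta,z))}f^{(\zeta,z)}\to 0$ pointwise, after which your compactness/convergence-in-measure scheme (run in the paper with ultrafilters rather than sequences) goes through. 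Without this choice of $f$ — or some equivalent device separating $K$ from a strictly smaller spectral set — step (iii) of your outline is false as stated, so the second half of the proposal is incomplete.
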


Notice that, by~\cite[Proposition 3.4]{Bernstein}, if $K$ is convex and $p<\infty$, then $\Bc^p_K(\Nc)\neq \Set{0}$ if and only if $K$ has a non-empty interior. In addition, $\Bc^\infty_K(\Nc)\neq \Set{0}$ if and only if $K\neq \emptyset $.

The proof is analogous to those of~\cite[Theorem 3.5]{NanaSehba} and~\cite[Proposition 5.3]{CP2}.

\begin{proof}
\textsc{Step I.} By assumption, there is a constant $C>0$ such that
\[
\norm{f_0}_{L^q(\mi)}\meg C\norm{f}_{\Bc^p_K(\Nc)}
\]
for every $f\in \Bc_K^p(\Nc)$.
Take $f\in \Bc^p_K(\Nc)$ with $\norm{f_0}_{L^p(\Nc)}=1$, and observe that, by translation invariance, we may assume that there is $R>0$ such that $f(\zeta,z)\neq 0$ for every $(\zeta,z)\in \overline B((0,0),R)$. Since $\overline B((0,0),R)$ is compact, there is a constant $C'>0$ such that
\[
\frac{1}{C'}\meg \abs{f(\zeta,z)}\meg C'
\]
for every $(\zeta,z)\in \overline B((0,0),R)$.  
Define $f^{(\zeta,z)}\coloneqq f((\zeta,z)^{-1}\,\cdot\,)$ for every $(\zeta,z)\in E\times F_\C$, so that $f^{(\zeta,z)}\in \Bc^p_K(\Nc)$, $\norm{f^{(\zeta,z)}}_{\Bc^p_K(\Nc)}=\norm{f_{-\rho(\zeta,z)}}_{L^p(\Nc)}\meg \ee^{H_{K}(-\rho(\zeta,z))}$ (cf.~\cite[Theorem 1.7]{Bernstein}), and
\[
\frac{1}{C'}\abs{f(0,0)} \meg \abs{f^{(\zeta,z)}(\zeta',z')}\meg C'\abs{f(0,0)}
\]
for every $(\zeta',z')\in B((\zeta,z),R)$.
Then,
\[
\ee^{H_{K}(-\rho(\zeta,z))}C \Meg \norm{f^{(\zeta,z)}}_{L^q(\mi)}\Meg \frac{\abs{f(0,0)}}{C'} M_{R}(\mi)(\zeta,z)^{1/q}
\]
for every $(\zeta,z)\in E\times F_\C$, whence the first assertion.

\textsc{Step II.} Assume, now, that $K$ has a non-empty interior and that $\mi$ is a compact $q$-Carleson measure for $\Bc^p_K(\Nc)$. 
Then, by~\cite[Theorem 3.2 and Proposition 3.4]{Bernstein}, there is a non-zero $f\in \Bc^{\min(1,p)}_{K'}(\Nc)\subseteq \Bc^p_K(\Nc)$, where $K'$ is a closed ball contained in $K$. We may then define $R,C'$ and $f^{(\zeta,z)}$, for every $(\zeta,z)\in E\times F_\C$, as in \textsc{step I}. 
Then, fix $h\in F$ and let $\Uf$ be an ultrafilter on $E\times F_\C$ which is finer than the filter `$(\zeta, z)\to \infty $', and observe that, by the compactness of the inclusions $\Bc^p_K(\Nc)\subseteq L^q(\mi)$ and  $\Bc^p_K(\Nc)\subseteq \Hol(E\times F_\C)$, $f^{(\zeta,z)}$ have limits $g_0$ and $g_1$ in $L^q(\mi)$ and in $\Hol(E\times F_\C)$ along $\Uf$, respectively. Since convergence in $L^q(\mi)$ implies convergence in measure, it is clear that $g_0=g_1$ $\mi$-almost everywhere. In addition, observe that $\lim_{(\zeta,z)\to \infty} \ee^{-H_K(-\rho(\zeta,z))} f^{(\zeta,z)}=0$ pointwise, by~\cite[Theorem 3.2]{Bernstein}, so that $g_1=0$. Hence, the arguments of \text{step I} imply that $\lim_{(\zeta,z), \Uf} \ee^{-q H_K(-\rho(\zeta,z))} M_{R}(\mi)(\zeta,z)=0$. By the arbitrariness of $\Uf$, this implies that $\ee^{-q H_K(-\rho(\,\cdot\,))} M_{R}(\mi)\in L^\infty_0(E\times F_\C)$. 
\end{proof}

\begin{cor}\label{cor:1}
 Take a compact subset $K$ of $\overline{\Lambda_+}$, $p,q\in (0,\infty)$ with $p\meg q$, and  $\mi\in \cM_+(E\times F_\C)$ such that $\rho(\supp{\mi})$ is bounded. Assume that $\Bc^p_K(\Nc)\neq \Set{0}$. Then, $\mi$ is a (resp.\ compact) $q$-Carleson measure for $\Bc^p_K(\Nc)$ if and only if $M_1(\mi)\in L^\infty(E\times F_\C)$ (resp.\ $M_1(\mi)\in L^\infty_0(E\times F_\C)$).
\end{cor}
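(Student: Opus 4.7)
The plan is to deduce Corollary~\ref{cor:1} directly from Propositions~\ref{prop:11bis} and~\ref{prop:1}, using the hypothesis that $\rho(\supp\mi)\subseteq\overline B_F(0,R_0)$ for some $R_0>0$ to collapse the weighted mixed-norm quantities that appear there. The key elementary remark is that $|\rho(\zeta,z)-\rho(\zeta',z')|\meg d((\zeta,z),(\zeta',z'))$ by the very definition of the distance $d$, so that every function of the form $M_R(\mi)$ or $M_{K',R}(\mi)$ vanishes off the slab $S_R\coloneqq \rho^{-1}(\overline B_F(0,R_0+R))$, on which the continuous weight $\ee^{\pm H_{K'}\circ \rho}$ is bounded above and below by positive constants.

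For the ``if'' direction, the hypothesis $p\meg q$ gives $(p/q)'=\infty$, so Proposition~\ref{prop:11bis} reduces the $q$-Carleson property (resp.\ compact $q$-Carleson property) to checking that $M_{qK,1}(\mi)\in L^{\infty,1}(E\times F_\C)$ (resp.\ $L^{\infty,1}_0$). Setting $C\coloneqq \sup_{h\in\overline B_F(0,R_0)}\ee^{H_{qK}(h)}$, the remark above yields the pointwise bound $M_{qK,1}(\mi)\meg C\,\chi_{S_1}\,M_1(\mi)$, so that
\[
\norm{M_{qK,1}(\mi)}_{L^{\infty,1}(E\times F_\C)}\meg C\,\Hc^m\bigl(\overline B_F(0,R_0+1)\bigr)\,\norm{M_1(\mi)}_{L^\infty(E\times F_\C)}.
\]
In the compact case, the same pointwise domination combined with the fact that $M_1(\mi)_h$ vanishes at infinity on $\Nc$ for each $h$ allows one to approximate $M_{qK,1}(\mi)$ in $L^{\infty,1}$ norm by its truncations to large balls, via dominated convergence in the $h$-variable, placing it in $L^{\infty,1}_0$.

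For the ``only if'' direction, Proposition~\ref{prop:1} produces $R>0$ such that $\ee^{-qH_K(-\rho(\,\cdot\,))}M_R(\mi)$ is bounded (resp.\ vanishes at infinity). Since $M_R(\mi)$ vanishes off $S_R$, on which $\ee^{-qH_K(-\rho)}$ is bounded below by a positive constant, $M_R(\mi)$ itself is bounded (resp.\ vanishes at infinity). The covering argument from \textsc{step I} of Lemma~\ref{lem:1}, applied with $p=q=\infty$, then transfers this to $M_1(\mi)$. The main conceptual obstacle is simply to recognize that, under $p\meg q$ together with the support hypothesis on $\mi$, all the weighted mixed-norm machinery of Propositions~\ref{prop:11bis} and~\ref{prop:1} collapses to boundedness (resp.\ vanishing at infinity) of the single scalar function $M_1(\mi)$.
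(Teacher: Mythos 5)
Your proof is correct and follows exactly the route the paper intends: the paper's own proof of this corollary is the single line ``follows from Lemma~\ref{lem:1} and Propositions~\ref{prop:11bis} and~\ref{prop:1}'', and your argument supplies precisely the missing details (the collapse of $(p/q)'$ to $\infty$ when $p\meg q$, the confinement of $M_{K',R}(\mi)$ to a slab where the weights $\ee^{\pm H_{K'}\circ\rho}$ are comparable to $1$, and the radius-change via \textsc{Step I} of Lemma~\ref{lem:1}). No gaps.
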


\begin{proof}
 The assertion follows from Lemma~\ref{lem:1} and Propositions~\ref{prop:11bis} and~\ref{prop:1}.
\end{proof}

From now on, we shall restrict ourselves to measures supported in a set of the form $\rho^{-1}(\overline B_F(0,R))$ for some $R>0$, since there are no simple criteria  to determine general Carleson measures as in the one-dimensional case. 
More precisely, assume that $\Nc=F$ and that $K$ is polyhedral for simplicity. We may then assume that $0$ is an extreme point of $K$. Let $C$ be the corresponding tangent cone (that is, $\R_+ K$), and $\Omega$ the interior of its polar (which is the set where $H_K=0$). Observe that, if $\mi$ is the vague limit on $F+i \Omega$ of a sequence of measures of the form $r_k^{q m/p} (r_k\,\cdot\,)_*\mi_k$, with $r_k\to 0^+$ and the $\mi_k$ uniformly $q$-Carleson for $\Bc^p_K(F)$, then $\mi$ is $q$-Carleson for $H^p(F+i\Omega)$. Conversely, if $\mi$ is concentrated in $F+i  \Omega$ and $q$-Carleson for $H^p(F+i \Omega)$, then it is $q$-Carleson for $\Bc^q_K(F)$. Thus, the problem of determining Carleson measures for $\Bc^q_K(F)$ is essentially equivalent to the problem of determining Carleson measures for $H^p(F+i\Omega)$, where $\Omega$ runs through the set of polars of the tangent cones to $K$ at its extremal points.

Consequently, already when $K$ is a parallelotope, determining $p$-Carleson measures for $\Bc^p_K(F)$ would require imposing a condition of the form $\int_{T(U)}\ee^{p H_K(\Im z)}\,\dd \mi(z)\meg C \Hc^m(U)$ for every open connected subset $U$ of $F$, where $T(U)$ is a suitable `tent' on $U$, adapted to $K$. The situation for general polyhedral cones is even less clear, whereas for general cones even this loose connection with Hardy spaces is no longer applicable.

\begin{teo}\label{teo:2}
 Take a compact subset $K$ of $\overline{\Lambda_+}$ with a non-empty interior, $p,q\in (0,\infty]$ with $q<\infty$, and $\mi\in \cM_+(E\times F_\C)$ such that $\rho(\supp{\mi})$ is bounded. Then, $\mi$ is a (resp.\ compact) $q$-Carleson measure for $\Bc^p_K(\Nc)$ if and only if $M_1(\mi)\in L^{(p/q)'}(E\times F_\C)$ (resp.\ $M_1(\mi)\in L^{(p/q)'}_0(E\times F_\C)$).
\end{teo}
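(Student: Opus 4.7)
The argument pivots on the observation that, since $\rho(\supp{\mi})$ is contained in some $\overline{B_F(0,R_0)}$, the factor $\ee^{q H_K\circ\rho}$ is bounded above and below by positive constants on any fixed neighbourhood of $\supp{\mi}$. Consequently, on the support of any average $M_r(\mi)$ one has $M_{qK,r}(\mi)\asymp M_r(\mi)$ pointwise; and any measurable $f\colon E\times F_\C\to \C$ with bounded $\rho$-support satisfies $\norm{f}_{L^{(p/q)',1}(E\times F_\C)}\lesssim \norm{f}_{L^{(p/q)'}(E\times F_\C)}$ by H\"older's inequality in the $h$-variable (trivially when $(p/q)'=\infty$), and the same continuous embedding holds between the corresponding $L^\bullet_0$ closures. \emph{Sufficiency} is then immediate: if $M_1(\mi)\in L^{(p/q)'}$ (resp.\ $L^{(p/q)'}_0$), these observations place $M_{qK,1}(\mi)\in L^{(p/q)',1}$ (resp.\ $L^{(p/q)',1}_0$), and Proposition~\ref{prop:11bis} yields the conclusion.

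\emph{Necessity when $p\leq q$} (so $(p/q)'=\infty$) is also immediate from Proposition~\ref{prop:1}: the boundedness (resp.\ vanishing at infinity) of $\ee^{-qH_K(-\rho)}M_R(\mi)$, together with the boundedness of $\ee^{qH_K(-\rho)}$ on the support of $M_R(\mi)$, yields $M_R(\mi)\in L^\infty$ (resp.\ $L^\infty_0$); Lemma~\ref{lem:1} then transfers this to $M_1(\mi)$.

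\emph{Necessity when $p>q$} is the substantive case. Note that here $L^{(p/q)'}_0=L^{(p/q)'}$, so it suffices to treat ordinary Carleson measures (compactness being automatic by truncation, since $M_1(\chi_{E\times F_\C\setminus B((0,0),L)}\mi)\to 0$ in $L^{(p/q)'}$ as $L\to\infty$). By Lemma~\ref{lem:1}, the goal reduces to showing $(M_{R\delta}(\mi)(\zeta_{j,k},z_{j,k}))\in \ell^{(p/q)'}(J\times K')$ for some restricted $(\delta,R)$-lattice; the bounded $\rho$-support of $\mi$ confines the nontrivially contributing indices $k$ to a finite subset $K_0\subseteq K'$, reducing the claim further to an $\ell^{(p/q)'}(J)$ bound for each $k_0\in K_0$. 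By duality this is equivalent to the bilinear estimate
\[
\sum_j b_j\, M_\delta(\mi)(\zeta_{j,k_0},z_{j,k_0}) \lesssim \norm{(b_j)}_{\ell^{p/q}(J)}
\]
for every nonnegative sequence $(b_j)$. To prove this Luecking-style inequality (cf.~\cite{Luecking2,NanaSehba,CP2}), I fix a nonzero atom $\psi\in \Bc^{\min(p,1)}_{K''}(\Nc)\subseteq \Bc^p_K(\Nc)$ for a closed ball $K''\subseteq K^\circ$ (available by~\cite[Theorem 3.2 and Proposition 3.4]{Bernstein}), chosen so that $\abs{\psi}\geq c>0$ on $\overline{B((0,0),\delta)}$, and form the randomised test function $f_\eps\coloneqq \sum_j \eps_j\, b_j^{1/q}\, \psi^{(\zeta_{j,k_0},z_{j,k_0})}$ with independent Rademacher signs $(\eps_j)$. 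Khintchine's inequality applied pointwise inside the integrals gives, on one side,
\[
\mathbb{E}\norm{f_\eps}_{L^q(\mi)}^q \gtrsim \sum_j b_j\, M_\delta(\mi)(\zeta_{j,k_0},z_{j,k_0}),
\]
and, on the other,
\[
\mathbb{E}\norm{f_\eps}_{\Bc^p_K(\Nc)}^p \lesssim \sum_j b_j^{p/q};
\]
invoking the $q$-Carleson hypothesis together with Jensen's inequality (permissible since $p\geq q$) then closes the bilinear bound.

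The main obstacle is the upper Khintchine estimate $\mathbb{E}\norm{f_\eps}_{\Bc^p_K}^p \lesssim \sum_j b_j^{p/q}$, which requires quantitative almost-orthogonality of the translates $(\psi^{(\zeta_{j,k_0},z_{j,k_0})})_0$ in $L^p(\Nc)$. The constancy of $\rho$ at $h_{k_0}$ along this sub-lattice reduces the Phragm\'en--Lindel\"of exponential factor from~\cite[Theorem 1.7]{Bernstein} to a harmless constant, leaving one to control the $L^p(\Nc)$-overlap of the translated atoms via the $L^p$-decay of $\psi$ and the geometry of the lattice (the noncommutativity of $\Nc$ being absorbed by the left-invariance of $L^p(\Nc)$).
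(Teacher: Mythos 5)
Your overall strategy coincides with the paper's: Proposition~\ref{prop:11bis} for sufficiency, the reduction via Proposition~\ref{prop:1} and Lemma~\ref{lem:1} when $p\meg q$, and, for the substantive case $q<p$, Luecking's randomization scheme (Rademacher signs, Khintchine's inequality, and $\ell^{p/q}$--$\ell^{(p/q)'}$ duality) applied to translates of a fixed atom along a lattice. The lower Khintchine estimate, the duality step, and the closing Jensen/H\"older step are all in order and match the paper's argument.

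The gap sits exactly at what you yourself call ``the main obstacle'': the estimate $\mathbb{E}\norm{f_\eps}_{\Bc^p_K(\Nc)}^p\lesssim\sum_j b_j^{p/q}$ is asserted but never proved, and the phrase ``control the $L^p(\Nc)$-overlap of the translated atoms via the $L^p$-decay of $\psi$ and the geometry of the lattice'' does not constitute an argument. This step is not a routine overlap count: for $p>1$ the triangle inequality only gives an $\ell^1(J)$ bound on the coefficients, and upgrading it to $\ell^p(J)$ requires a genuine input from the separation of the lattice. The paper proves the stronger, deterministic statement that the synthesis map $\Psi\colon\lambda\mapsto\sum_j\lambda_j L_{(\zeta_j,x_j+i\Phi(\zeta_j))}\psi$ is bounded from $\ell^p(J)$ into $\Bc^p_K(\Nc)$ (so no expectation is needed on this side): for $p\meg 1$ this is the $p$-triangle inequality together with the norm control of translates from~\cite[Theorem 1.7]{Bernstein}; for $p>1$ one dominates $\sum_j\abs{\lambda_j}L_{(\zeta_j,x_j)}\abs{\psi_0}$ pointwise by a constant multiple of the convolution $\bigl(\sum_j\abs{\lambda_j}\chi_{B_\Nc((\zeta_j,x_j),R/2)}\bigr)*\abs{\psi_0}$, using that $\abs{\psi_0}$ is comparable to its translates by elements of a small ball, and then applies Young's inequality together with the pairwise disjointness of the balls $B_\Nc((\zeta_j,x_j),R/2)$, which yields $\norm{\sum_j\abs{\lambda_j}\chi_{B_\Nc((\zeta_j,x_j),R/2)}}_{L^p(\Nc)}\asymp\norm{\lambda}_{\ell^p(J)}$. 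Until some such argument is written out, your necessity proof for $q<p$ is incomplete. A minor further point: you take the atom in $\Bc^{\min(p,1)}_{K''}(\Nc)$ with ``$K''\subseteq K^\circ$''; in this paper $K^\circ$ denotes the \emph{polar} of $K$, and what is needed is a closed ball contained in the interior of $K$ (as in Step~II of the proof of Proposition~\ref{prop:1}), which is where the hypothesis that $K$ has non-empty interior enters.
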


The proof is based on a technique developed in~\cite{Luecking1},  and then applied also in~\cite{NanaSehba,CP2}.

\begin{proof}
 One implication follows from Proposition~\ref{prop:11bis}. In addition, by Corollary~\ref{cor:1}, we may reduce to the case in which $q<p$, in which case $L^{(p/q)'}_0(E\times F_\C)=L^{(p/q)'}(E\times F_\C)$. Then, assume that $\mi$ is a $q$-Carleson measure for $\Bc^p_K(\Nc)$, and let us prove that $M_1(\mi)\in L^{(p/q)'}(E\times F_\C)$.
 
 Fix $f\in \Bc^{\min(1,p)}_K(\Nc)$ such that $\norm{f}_{\Bc^{\min(1,p)}_K(\Nc)}=1$ (cf.~\cite[Proposition 3.4]{CalziPeloso}). Observe that, up to a translation, we may assume that $f(\zeta,z)\neq 0$ for every $(\zeta,z) \in \overline B((0,0),R)$ for some $R>0$. Set $C_1\coloneqq \max_{\overline B((0,0),R)}\abs{f_0}/\min_{\overline B((0,0),R)}\abs{f_0}$.
 Take an $(R/4,2)$-lattice $(\zeta_j,x_j)_{j\in J}$ on $\Nc$, and let us prove that the mapping
 \[
 \Psi\colon \ell^p(J)\ni \lambda \mapsto \sum_j \lambda_j L_{(\zeta_j,x_j+i\Phi(\zeta_j))} f\in \Bc^p_K(\Nc)
 \]
 is well defined and continuous. If $p\meg 1$, then clearly
 \[
 \norm{\Psi(\lambda)}_{\Bc^p_K(\Nc)}\meg \norm*{ \abs{\lambda_j}\norm{L_{(\zeta_j,x_j+i\Phi(\zeta_j))} f}_{\Bc^p_K(\Nc)}  }_{\ell^p(J)}=\norm{\lambda}_{\ell^p(J)},
 \]
 whence our claim in this case.
 If, otherwise, $p>1$, then observe that, setting $c_R\coloneqq \Hc^{2n+m}(B_\Nc((0,0),R/2))$,
 \[
 \begin{split}
  \norm{\Psi(\lambda)}_{\Bc^p_K(\Nc)}& \meg \norm*{\sum_j \abs{\lambda_j} L_{(\zeta_j,x_j)} \abs{f_0}}_{L^p(\Nc)}\\
   &\meg \frac{C_1}{c_R}\norm*{\sum_j \int_\Nc \abs{\lambda_j} \chi_{B_\Nc((\zeta_j,x_j),R/2)}(\zeta,x) L_{(\zeta,x)}\abs{f_0}\,\dd (\zeta,x)   }_{L^p(\Nc)}\\
   &= \frac{C_1}{c_R}\norm*{\Big(\sum_j  \abs{\lambda_j} \chi_{B_\Nc((\zeta_j,x_j),R/2)}\Big)*\abs{f_0} }_{L^p(\Nc)}\\
   &\meg \frac{C_1}{c_R}\norm*{\sum_j  \abs{\lambda_j} \chi_{B_\Nc((\zeta_j,x_j),R/2)}}_{L^p(\Nc)}\\
   &\meg \frac{C_1}{c_R^{1/p'}}\norm*{\lambda}_{\ell^p(J)}
 \end{split}
 \]
 by Young's inequality, whence our claim also in this case.

 Now, take a probability space $(X,\nu)$ and a countable family $(r_j)_{j\in J}$ of $\nu$-measurable functions on $X$ such
 that 
 \[
 \Big(\bigotimes_{j\in J'} r_j \Big)(\nu)=\frac{1}{2^{\card(J')}}\sum_{\eps\in \Set{-1,1}^{J'}} \delta_\eps
 \]
 for every finite subset $J'$ of $J$ (cf.~\cite[C.1]{Grafakos}). By
 Khintchine's inequality, there is a constant $C_2>0$ such that 
 \[
 \frac{1}{C_2}\Big( \sum_{j\in J} \abs{a_j}^2 \Big)^{1/2}\meg\norm*{ \sum_{j\in J} a_j r_j}_{L^{q}(\nu)}\meg C_2 \Big( \sum_{j\in J} \abs{a_j}^2\Big)^{1/2} 
 \]
 for every $(a_j)\in \C^{(J)}$ (cf.~\cite[C.2]{Grafakos}). By the assumptions and the continuity of $\Psi$, there is a constant $C_3>0$ such that, for every
 $\lambda\in \C^{(J)}$, 
 \[
 \norm{\Psi((r_j \lambda_j)_j)}_{L^{q}(\mi)}\meg C_3  \norm{(r_j \lambda_j)_j}_{\ell^{p}(J)}= C_3 \norm{\lambda}_{\ell^{p}(J)} 
 \]
 $\nu$-almost everywhere.
 Therefore, by means of Tonelli's theorem we see that
 \[
 \norm*{\left( \sum_{j} \abs*{\lambda_{j}  L_{(\zeta_j,x_j+i\Phi(\zeta_j))}f}^2 \right)^{1/2}}_{L^{q}(\mi)}\meg C_2 C_3\norm{\lambda}_{\ell^{p}(J)} 
 \]
 for every $\lambda\in \C^{(J)}$. 
 Now, observe that there is $N\in\N$ such that $\sum_j \chi_{B((\zeta_j,x_j+i\Phi(\zeta_j)),R)}\meg N$ on $E\times F_\C$.
 Then,
 \[
 \begin{split}
  \norm*{\left( \sum_{j} \abs*{\lambda_{j}  L_{(\zeta_j,x_j+i\Phi(\zeta_j))}f}^2 \right)^{1/2}}_{L^{q}(\mi)}
  &\Meg \frac{\abs{f(0,0)}}{C_1}\norm*{\left( \sum_{j} \abs*{\lambda_{j}} \chi_{B((\zeta_j,x_j+i\Phi(\zeta_j)),R)} \right)^{1/2}}_{L^{q}(\mi)}\\ 
  &\Meg  \frac{\abs{f(0,0)}N^{-(1-p/2)_+}}{C_1}\left( \sum_{j} \abs*{\lambda_{j}}^q M_{R}(\mi)(\zeta_j,x_j+i\Phi(\zeta_j))\right)^{1/q}
 \end{split}
 \]
 for every $\lambda\in \C^{(J)}$. Using the natural duality between $\ell^{p/q}_0(J)$ and $\ell^{(p/q)'}(J)$, we then see that
 \[
 M_{R}(\mi)(\zeta_j,x_j+i\Phi(\zeta_j))\in \ell^{(p/q)'}(J)  .
 \]
 Therefore, by means of Lemma~\ref{lem:1}, we see that $M_1( (\chi_{B_F(0,R/2)}\circ \rho)\cdot \mi)\in L^{(p/q)'}(E\times F_\C)$. Applying the preceding arguments to (a finite number of) the translates $L_{(0,i h)}\mi$ of $\mi$ (which are still necessarily $q$-Carleson for $\Bc^p_K(\Nc)$), $h\in F$, the assertion follows. 
\end{proof}

We now illustrate a consequence of the theorem in the setting of the
Bernstein spaces $\Bc_K^p(\R)$, when $K$ is an interval in $\R$. 
It depends on the well-known characterization
for Carleson measures on Hardy spaces and it is proved along the lines of~\cite[Chapter VI,
\S\ 2]{Seip} for the case in which $p=q=2$ and $\mi$ is a (locally
finite) sum of distinct Dirac deltas.

For the sake of simplicity, we do not consider the case $q<p$, since
the extension of the corresponding assertion for the Hardy space
$H^p(\C_+)$  is more involved.  We denote by $\C_\pm=\R\pm i \R_+^*$. 

\begin{prop}\label{prop:11}
	Assume that $E=\Set{0}$ and $F=F'=\R$. Take $p,q\in (0,\infty)$, $p\meg q$, a compact interval $K$ in $\R$, and $\mi\in \cM_+(\C)$. Then, $\mi$ is $q$-Carleson for $\Bc^p_K(\R)$ if and only if there is a constant $C>0$ such that
 \[
 M_{q K, R}(\mi)\meg C R^{q/p}
 \]
 on $\R$, for every $R\Meg 1$.
\end{prop}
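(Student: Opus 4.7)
The plan is to reduce the problem to a Carleson-measure question for the Hardy spaces $H^p(\C_\pm)$ on the upper and lower half-planes, via the standard exponential factorization of Bernstein functions. Writing $K=[a,b]$, so that $H_K(y)=-ay$ for $y\Meg 0$ and $H_K(y)=-by$ for $y\meg 0$, for $f\in\Bc^p_K(\R)$ the functions $F_+(z):=\ee^{-iaz}f(z)$ and $F_-(z):=\ee^{-ibz}f(z)$ restricted to $\C_+$ and $\C_-$ respectively belong to $H^p(\C_\pm)$ with norms bounded by $\|f\|_{\Bc^p_K(\R)}$, thanks to $\|f_y\|_{L^p(\R)}\meg\ee^{H_K(y)}\|f_0\|_{L^p(\R)}$ from~\cite[Theorem 1.7]{Bernstein}. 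The identities $|f(z)|^q=\ee^{qH_K(\Im z)}|F_\pm(z)|^q$ on $\overline{\C_\pm}$ convert $\int_{\C_\pm}|f|^q\,\dd\mi$ into $\int|F_\pm|^q\,\dd\nu_\pm$, where $\nu_\pm:=\ee^{qH_K(\Im z)}\mi|_{\C_\pm}$.

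For the sufficiency direction, assuming $M_{qK,R}(\mi)(x)\meg CR^{q/p}$ on $\R$ for $R\Meg 1$, I split $\int_\C|f|^q\,\dd\mi$ into the slab $\{|\Im z|\meg 1\}$ and the two regions $\{\pm\Im z\Meg 1\}$. On the slab, the case $R=1$ of the hypothesis yields $\mi(I_n\times[-1,1])\meg C$ for unit intervals $I_n=[n,n+1)$; combined with a local Plancherel--P\'olya estimate $\|f\|_{L^\infty(I_n\times[-1,1])}^p\meg C'\int_{n-3}^{n+4}|f_0|^p\,\dd t$ (obtained via subharmonicity of $|f|^p$ on a slightly enlarged box together with the line-wise $L^p$ growth bound), and summed using the embedding $\ell^1\subset\ell^{q/p}$ (valid since $q\Meg p$), this produces $\int_{\{|\Im z|\meg 1\}}|f|^q\,\dd\mi\meg C''\|f\|_{\Bc^p_K}^q$. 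On $\{\Im z\Meg 1\}$ I verify the classical Hardy-Carleson condition $\nu_+(T(I))\meg C|I|^{q/p}$ for the tents $T(I)=I\times(0,|I|]$: for $|I|\Meg 1$, the inclusion $T(I)\subset\overline B((x_I,0),|I|)$ gives $\nu_+(T(I))\meg M_{qK,|I|}(\mi)(x_I)\meg C|I|^{q/p}$, while for $|I|<1$ the tent $T(I)$ does not meet $\{\Im z\Meg 1\}$ and the bound is vacuous. The classical Carleson theorem for $H^p(\C_+)$ (Carleson for $p=q$, Duren for $p<q$) then yields $\int_{\{\Im z\Meg 1\}}|f|^q\,\dd\mi=\int|F_+|^q\,\dd\nu_+\meg C\|F_+\|_{H^p}^q\meg C\|f\|_{\Bc^p_K}^q$; the region $\{\Im z\meg -1\}$ is treated symmetrically via $F_-$.

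For the necessity direction, fix $x_0\in\R$ and $R\Meg R_0\asymp 1/(b-a)$, and consider the test functions $f_+^{x_0,R}(z):=\ee^{iaz}\psi_R(z-x_0)$ and $f_-^{x_0,R}(z):=\ee^{ibz}\psi_R(z-x_0)$, where
\[
\psi_R(z):=\Bigl(\frac{\sin(z/(2R))}{z/(2R)}\Bigr)^{2N}\ee^{iNz/R},
\]
with $N\Meg 1$ chosen so that $2N/R\meg b-a$ and $2Np>1$ (so $\psi_R\in L^p(\R)$). The identity $|\sin((u+iv)/(2R))|^2=\sin^2(u/(2R))+\sinh^2(v/(2R))$, combined with $|\sin t|\Meg(2/\pi)|t|$ for $|t|\meg\pi/2$ and $|\sinh t|\Meg|t|$, gives $|\psi_R(u+iv)|\Meg c>0$ for $|u|,|v|\meg R$, while $\|\psi_R\|_{L^p(\R)}\asymp R^{1/p}$. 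Hence $f_\pm^{x_0,R}\in\Bc^p_K$ with norm $\lesssim R^{1/p}$, and $|f_+^{x_0,R}(z)|^q\Meg c^q\ee^{qH_K(\Im z)}$ on $\overline B((x_0,0),R)\cap\overline{\C_+}$ (using $H_K(y)=-ay$ for $y\Meg 0$). The Carleson hypothesis $\int|f_+^{x_0,R}|^q\,\dd\mi\meg C\|f_+^{x_0,R}\|_{\Bc^p_K}^q\lesssim R^{q/p}$ then yields $M_{qK,R}(\mi|_{\overline{\C_+}})(x_0)\lesssim R^{q/p}$, and the symmetric argument with $f_-^{x_0,R}$ on $\overline{\C_-}$ plus summation gives $M_{qK,R}(\mi)(x_0)\lesssim R^{q/p}$ for $R\Meg R_0$; the case $R\in[1,R_0)$ follows by monotonicity. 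The main obstacle is producing the test function $\psi_R$ with the required uniform lower bound on the $2R\times 2R$ complex box, which is settled by the explicit sinc-power construction above; the other potentially delicate point, namely the small-scale Hardy-Carleson condition in the sufficiency direction, is finessed by first separating off the slab $\{|\Im z|\meg 1\}$.
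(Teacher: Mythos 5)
Your overall strategy coincides with the paper's. For sufficiency, both arguments split off a horizontal slab around $\R$ and reduce the two remaining regions to the Carleson--Duren embedding for $H^p(\C_\pm)$ via the factors $\ee^{-iaz}$, $\ee^{-ibz}$; the paper disposes of the slab by invoking Theorem~\ref{teo:2} and of the half-planes by citing Luecking, whereas you redo the slab by hand and verify the tent condition explicitly (correctly: tents of side $\Meg 1$ sit inside the balls of the metric $d$, and smaller tents miss the support). For necessity, both arguments test the Carleson inequality against translates of a function of $L^p$-norm $\asymp R^{1/p}$ that is bounded below, relative to $\ee^{H_K(\Im z)}$, on an $R\times R$ box; the paper builds it from a Fourier integral behaving like $\ee^{i\alpha(w-\overline z)}(w-\overline z)^{-k}$, while you use a modulated power of $\sinc$ with spectrum $[a,a+2N/R]\subseteq K$. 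Your spectral bookkeeping, the lower bound on the box, and the treatment of $1\meg R<R_0$ are all fine.

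The one step that fails as written is the ``local Plancherel--P\'olya estimate'' $\norm{f}^p_{L^\infty(I_n\times[-1,1])}\meg C'\int_{n-3}^{n+4}\abs{f_0}^p\,\dd t$. No pointwise bound with a compactly supported right-hand side can hold for band-limited functions: already for $p=2$ and $K=[-\kappa,\kappa]$, a bound $\abs{f(i/2)}^2\meg C\int_{-4}^{4}\abs{f_0}^2$ would force $\xi\mapsto\ee^{-\xi/2}$ to agree on $[-\kappa,\kappa]$ with the Fourier transform of a function in $L^2(-4,4)$ (the functional $f\mapsto f(i/2)$ would lie in the range of the adjoint of the truncated Fourier transform $L^2(-\kappa,\kappa)\to L^2(-4,4)$), which is impossible by unique continuation since $\ee^{-\xi/2}\notin L^2(\R)$. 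Indeed, the two ingredients you cite only give $\norm{f}^p_{L^\infty(I_n\times[-1,1])}\meg C\int_{-2}^{2}\int_{n-2}^{n+3}\abs{f(s+it)}^p\,\dd s\,\dd t$ (subharmonicity) and the line-wise bound $\norm{f_t}_{L^p(\R)}\meg\ee^{H_K(t)}\norm{f_0}_{L^p(\R)}$, which is global in $s$ and cannot be localized. Fortunately your argument never needs the local form: after the step $\ell^1\subseteq\ell^{q/p}$ you only use the \emph{summed} inequality $\sum_n\norm{f}^p_{L^\infty(I_n\times[-1,1])}\meg C\norm{f_0}^p_{L^p(\R)}$, which follows by summing the area bounds over $n$ (the strips $[n-2,n+3]$ have finite overlap) and then applying the global line-wise bound --- this is exactly the content of Proposition~\ref{prop:11bis}. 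With that substitution the slab estimate, and hence the whole proof, goes through.
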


\begin{proof}
The condition is sufficient. Observe that, by
 Theorem~\ref{teo:2} below, we may assume that $\mi$ is
supported in $\Set{z\in \C\colon \abs{\Im z}\Meg 1}$. Then,
take $\alpha,\beta\in \R$ so that $K=[\alpha,\beta]$, and
observe that $\ee^{-i\langle \alpha,\,\cdot\,\rangle}
\Bc^p_K (\R)  \subseteq H^p(\C_+)$ while $\ee^{-i\langle
  \beta,\,\cdot\,\rangle} \Bc^p_K (\R) \subseteq H^p(\C_-) $. Since the
restrictions of $\ee^{-\langle \alpha,\,\cdot\,\rangle}\mi $ and
$\ee^{-\langle \beta,\,\cdot\,\rangle}\mi$ to $\C_+$ and $\C_-$,
respectively, are $q$-Carleson measures for $H^p(\C_+)$ and
$H^p(\C_-)$, respectively, thanks to the assumptions
and~\cite[7.E.II]{Luecking}, it is clear that $\mi$ is a $q$-Carleson
measure for $\Bc^p_K(\R)$. 

 The condition is necessary. By assumption, there is a constant $C_1>0$ such that
 \[
 \norm{f}_{L^q(\mi)}\meg C_1\norm{f_0}_{L^p(\R)}
 \]
 for every $f\in \Bc^p_K(\R)$. 
 Now, take $k\in\N$ so that $ k p>1$ and fix $\varphi\colon \R\to \R$ so that $\varphi(\lambda)=\frac{1}{(k-1)!}(\lambda-\alpha)_+^{k-1} $ for every  $\lambda\in (-\infty, (\alpha+\beta)/2]$, while $\varphi$ is of class $C^\infty$ on $(\alpha,+\infty) $ and $\varphi(\lambda)=0$ for $\lambda \Meg \beta$. Define
 \[
 f^{(z)}\colon \C\ni w \mapsto \int_\alpha^\beta (\lambda-\alpha)^k \varphi(\lambda) \ee^{i\langle \lambda, w-\overline z\rangle}\,\dd \lambda
 \]
 for every $z\in \C$. Observe that, integrating by parts,
 \[
 f^{(z)}(w)= \frac{(-1)^k \ee^{i\langle \alpha, w-\overline z\rangle}}{[i(w-\overline z)]^k}\left( 1 +\int_0^{\beta-\alpha} \varphi^{(k)}(\lambda+\alpha)\ee^{i\langle \lambda ,w-\overline z\rangle}\,\dd \lambda  \right)
 \]
 for every $w,z\in\C$ with $w\neq \overline z$. Observe that, for every $w,z\in\C$,
 \[
 \begin{split}
 \abs*{\int_0^{\beta-\alpha} \varphi^{(k)}(\lambda+\alpha)\ee^{i\langle \lambda w-\overline z\rangle}\,\dd \lambda }&\meg \norm{\varphi^{(k)}}_{L^\infty(\R)} \frac{\ee^{-\langle (\beta-\alpha)/2, \Im w+\Im z\rangle}-\ee^{-\langle \beta-\alpha, \Im w+\Im z\rangle}}{\Im w+\Im z},
 \end{split}
 \]
 which we may assume to be $\meg \frac 1 2 $ if $\Im w+\Im z\Meg M$ for some $M>0$.
 Then,
 \[
 \norm{f^{(z)}_0}_{L^p(\R)}^p\meg \frac{C_2}{(\Im z)^{k p-1}}\ee^{-p\langle\alpha,\Im z\rangle}
 \]
 for every $z\in \C$ with $\Im z\Meg M$, where $C_2\coloneqq \frac{3^p}{2^p} \int_\R \frac{1}{\abs{x+i}^{kp}}\,\dd x$. Analogously,
 \[
 \norm{f^{(z)}}_{L^q(\mi)}^q\Meg \frac{\ee^{ -q\langle \alpha, \Im z\rangle}}{2} \int_{B(\Re z,R)\cap\overline{\C_+}} \frac{\ee^{-q\langle\alpha, \Im w \rangle}}{2^{k q /2}(R+\Im z)^{k q} }\,\dd \mi(w)
 \]
 for every $z\in \C$ with $\Im z\Meg M$, and for every $R>0$. Choosing $z=x+i R$, this implies that
 \[
 \int_{B(x, R)\cap \overline{\C_+}} \ee^{-q \langle \alpha, \Im w\rangle}\,\dd \mi(w)\meg 2^{1+3 k p/2} C_2  R
 \]
 for every $x\in \R$ and for every $R\Meg M$. Thus,
 \[
 M_{q K,R}(\chi_{\overline{\C_+}}\cdot \mi)\meg 2^{3k q/2+1} C_2^{q/p} R^{q/p}
 \]
 on $\R$, for every $R\Meg M$. In a similar way, one shows that there is a constant $C_3>0$ such that
 \[
 M_{q K,R}(\chi_{\overline{\C_-}}\cdot \mi)\meg C_3 R^{q/p}
 \]
 on $\R$, for $R$ sufficiently large. By means of Proposition~\ref{prop:1} below, this completes the proof. 
\end{proof}

\section{Sampling Measures}\label{sec:4}

We keep the notation of Section~\ref{sec:3}. We shall only consider measures supported in $\rho^{-1}(B)$ for some bounded subset $B$ of $F$. Consequently, we shall no longer consider $M_{K,R}(\mi)$, but only $M_R(\mi)$.

\begin{deff}
 Take a compact subset  $K$ of $\overline{\Lambda_+}$, $p\in (0,\infty)$ and $\mi\in \cM_+(E\times F_\C)$. We say that $\mi$ is $p$-sampling for $\Bc^p_K(\Nc)$ if $\Bc^p_K(\Nc)\subseteq L^p(\mi)$ continuously and the canonical mapping $\Bc^p_K(\Nc)\to L^p(\mi)$ is an isomorphism onto its image.
 
 We say that $\mi$ is strongly $p$-sampling for $\Bc^p_K(\Nc)$ if it is $p$-sampling for $\Bc^p_K(\Nc)$ and 
 \[
 \Bc^{p}_K(\Nc)=\Set{f\in \Hol_K(E\times F_\C)\colon f\in L^p(\mi)  }.
 \]
 
 We say that a locally finite\footnote{This condition is only added to ensure that the corresponding measure be Radon.} family $(\zeta_j,z_j)$ of elements of $E\times F_\C$ is (strongly) sampling for $\Bc^p_K(\Nc)$ if the  measure $\sum_j \delta_{(\zeta_j,z_j)}$ is (strongly) $p$-sampling for $\Bc^p_K(\Nc)$.
\end{deff}

Let us briefly comment on the notion of a strongly $p$-sampling measure. On the one hand, $p$-sampling measures allow to reconstruct (up to constants) the quasi-norm of a holomorphic function \emph{which is known to belong to  the Bernstein space $\Bc^p_K(\Nc)$}. On the other hand, strongly $p$-sampling measures allow to verify if a holomorphic function $f\in \Hol_K(E\times F_\C)$ belongs to the Bernstein space $\Bc^p_K(\Nc)$. Obviously, some (growth) conditions on $f$ have to be imposed, unless the measure $\mi$ satisfies very strong properties (in particular, $\rho(\mi)$ must be unbounded, a case which is \emph{not} considered below). 

Notice that the sampling results proved in~\cite{PlancherelPolya} concern actually strongly sampling sequences, and hold uniformly for every $p\in (0,\infty]$. Sharp sampling results are then obtained by a limiting procedure for $p\in (1,\infty)$.

\begin{deff}
 Take a compact subset  $K$ of $\overline{\Lambda_+}$ and $\eps>0$. Then, we set $K_\eps\coloneqq K+(\overline B_{F'}(0,\eps)\cap \overline{\Lambda_+})$.
\end{deff}

\begin{lem}\label{lem:2}
 Take a compact subset  $K$ of $\overline{\Lambda_+}$, $p\in (0,\infty)$ and $\mi\in \cM_+(E\times F_\C)$ such that $\rho(\supp{\mi})$ is bounded. If $\mi$ is $p$-sampling for $\Bc^p_{K_\eps}(\Nc)$, then $\mi$ is strongly $p$-sampling for $\Bc^p_K(\Nc)$.
\end{lem}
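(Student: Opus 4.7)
The plan is to first address the easy direction: the inclusion $\Bc^p_K(\Nc)\hookrightarrow \Bc^p_{K_\eps}(\Nc)$ is isometric (both norms coincide with $\norm{f_0}_{L^p(\Nc)}$), so the $p$-sampling property on the larger space immediately descends to the smaller one. The substance of the lemma lies in the reverse inclusion defining strong sampling: every $f\in \Hol_K(E\times F_\C)$ with $f\in L^p(\mi)$ must satisfy $f_0\in L^p(\Nc)$.

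The strategy for this is to multiply $f$ by a one-parameter family of Schwartz CR functions whose Fourier supports shrink to $\Set{0}$ inside $\overline{\Lambda_+}$, apply the sampling inequality for $\Bc^p_{K_\eps}(\Nc)$, and let the parameter degenerate. Concretely, I would fix $\chi\in C_c^\infty(\Lambda_+\cap B_{F'}(0,\eps))$ with $\int \chi\,\dd\lambda=1$ and set
\[
\phi(\zeta,z)\coloneqq \int_{F'}\chi(\lambda)\,\ee^{i\langle \lambda,z\rangle}\,\dd\lambda,\qquad \phi_\tau(\zeta,z)\coloneqq \phi(\tau^{1/2}\zeta,\tau z)\quad (\tau\in (0,1]).
\]
Because $\supp\chi$ is a compact subset of the open cone $\Lambda_+$, the Gaussian weight $\ee^{-\langle \lambda,\Phi(\zeta)\rangle}$ appearing in $\phi_0(\zeta,x)=\int \chi(\lambda)\,\ee^{i\langle \lambda,x\rangle-\langle \lambda,\Phi(\zeta)\rangle}\,\dd\lambda$ makes $\phi_0$ Schwartz on $\Nc$; moreover $\phi_0\in \Oc_{B_{F'}(0,\eps)\cap \overline{\Lambda_+}}(\Nc)$, $\phi_0(0,0)=1$, and the homogeneity $\Phi(\tau^{1/2}\zeta)=\tau \Phi(\zeta)$ gives $(\phi_\tau)_0(\zeta,x)=\phi_0(\tau^{1/2}\zeta,\tau x)\to 1$ pointwise as $\tau\to 0^+$.

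The key verification is that $\phi_\tau f\in \Bc^p_{K_\eps}(\Nc)$ for every $\tau\in (0,1]$. The growth estimate follows from the identity $H_{K_\eps}=H_K+H_A$ with $A=B_{F'}(0,\eps)\cap \overline{\Lambda_+}$ (valid since $A$ is convex) together with $\tau H_A\meg H_A$, a consequence of $0\in A$ forcing $H_A\Meg 0$; the Fourier support of $(\phi_\tau f)_0$ lies in $K+\tau A\subseteq K_\eps$; and the Schwartz factor $(\phi_\tau)_0$ absorbs the polynomial growth of $f_0$, so $(\phi_\tau f)_0\in L^p(\Nc)$. Applying the sampling hypothesis,
\[
\norm{(\phi_\tau)_0 f_0}_{L^p(\Nc)}\meg C\norm{\phi_\tau f}_{L^p(\mi)}\meg C \Big(\sup_{\supp\mi}\abs{\phi_\tau}\Big)\norm{f}_{L^p(\mi)},
\]
and because $\rho(\supp\mi)$ is bounded while $\abs{\phi_\tau(\zeta,z)}\meg C'\ee^{\tau H_A(\rho(\zeta,z))}\meg C'\ee^{H_A(\rho(\zeta,z))}$, the supremum is uniformly bounded in $\tau\in (0,1]$. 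Fatou's lemma applied to $(\phi_\tau)_0 f_0\to f_0$ then delivers $\norm{f_0}_{L^p(\Nc)}<\infty$, as desired.

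The main obstacle I anticipate is marshalling all the qualitative properties of $\phi$ at once: CR, Schwartz on $\Nc$, with Fourier support prescribed arbitrarily close to the origin inside $\overline{\Lambda_+}$, and with the rescaled family $\phi_\tau$ uniformly bounded on $\supp\mi$. The Gaussian-inverse-Fourier recipe above handles all four constraints simultaneously, and it is available precisely because of the openness of the cone $\Lambda_+$ --- i.e.\ because $\cM$ is Siegel.
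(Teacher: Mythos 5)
Your argument is correct and is essentially the paper's own proof: the paper likewise multiplies $f$ by dilates $g(2^{-j}\,\cdot\,)$ of a fixed Schwartz CR function $g\in \Bc^\infty_{\overline B_{F'}(0,\eps)\cap\overline{\Lambda_+}}(\Nc)$ with $g(0,0)=1$ (whose existence it cites from the Paley--Wiener--Schwartz reference rather than constructing it by the inverse-Fourier recipe as you do), applies the sampling inequality for $\Bc^p_{K_\eps}(\Nc)$, and passes to the limit. The only difference is cosmetic: you build the auxiliary function explicitly, while the paper quotes it.
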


\begin{proof}
 Fix $g\in \Bc^\infty_{\overline B_{F'}(0,\eps)\cap\overline{\Lambda_+}}(\Nc) $ so that $g_0\in \Sc(\Nc)$ and $g(0,0)=\norm{g_0}_{L^\infty(\Nc)}=1$ (cf.~\cite[Theorem 4.2 and Proposition 5.2]{PWS}). Take $f\in \Hol_K(E\times F_\C)$, so that $f g^{(j)}\in \Hol_{K_\eps}(E\times F_\C)$ for every $j\in \N$, where $g^{(j)}\coloneqq g(2^{-j}\,\cdot\,)$. In addition, $f_0 g^{(j)}_0\in \Sc(\Nc)$, so that $f g^{(j)}\in \Bc^p_{K_\eps}(\Nc)$. Then, there is a constant $C>0$ such that
 \[
 \norm{f_0 g^{(j)}_0}_{L^p(\Nc)}\meg C \norm{f g^{(j)}}_{L^q(\mi)}\meg C\norm{f}_{L^q(\mi)} \sup_{h\in\rho(\supp{\mi})} \ee^{\eps \abs{h}}
 \]
 for every $j\in \N$ (cf.~\cite[Theorem 1.7]{Bernstein}). Passing to the limit for $j\to \infty$, this proves that $f_0\in L^p(\Nc) $. The assertion follows.
\end{proof}

\begin{deff}
 We say that $\mi\in \cM_+(E\times F_\C)$ is sparse if there is $R$ such that for every $\eps>0$ there is $R'_\eps>0$ such that $M_{R}(\mi)\meg \eps$ on $(E\times F_\C)\setminus B((0,0),R'_\eps)$.
\end{deff}

\begin{prop}\label{prop:4}
 Take a compact subset $K$ of $\overline{\Lambda_+}$, $\eps>0$, $p\in (0,\infty)$, and  $\mi,\mi'\in \cM_+(E\times F_\C)$ such that $\rho(\Supp{\mi+\mi'})$ is bounded. If $\mi+\mi'$ is $p$-sampling for $\Bc^p_{K_\eps}(\Nc)$ and $\mi'$ is sparse, then $\mi$ is strongly $p$-sampling for $\Bc^p_K(\Nc)$.
\end{prop}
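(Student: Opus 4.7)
The plan is to reduce to a sampling estimate for $\mi$ on the slightly larger space $\Bc^p_{K_{\eps/2}}$ via Lemma~\ref{lem:2}, exploit the sparseness of $\mi'$ to absorb its ``far'' part as a small-constant Carleson term through Proposition~\ref{prop:11bis}, and treat its compactly supported ``near'' part via a Fredholm perturbation argument. By Lemma~\ref{lem:2} applied with $\eps/2$ in place of $\eps$ (noting $K_{\eps/2}\subseteq K_\eps$), it suffices to show that $\mi$ is $p$-sampling for $\Bc^p_{K_{\eps/2}}$. Given $\delta>0$ to be fixed later, sparseness yields a decomposition $\mi'=\mi'_1+\mi'_2$, where $\mi'_1$ is the restriction of $\mi'$ to a sufficiently large ball $\overline{B((0,0),R_\delta)}$ (hence compactly supported) and $\mi'_2=\mi'-\mi'_1$ satisfies $M_1(\mi'_2)\meg\delta$ pointwise on $E\times F_\C$. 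Both pieces inherit the boundedness of $\rho(\Supp\mi')$, so applying Proposition~\ref{prop:11bis} to $\mi'_2$ with $q=p$ and $K$ replaced by $K_\eps$ (the $L^{\infty,1}$-norm of $M_{pK_\eps,1}(\mi'_2)$ being controlled by $\delta$ times a constant depending only on the bounded $\rho$-support) gives $\|f\|_{L^p(\mi'_2)}^p\meg C_1\delta\|f_0\|_{L^p(\Nc)}^p$ for every $f\in\Bc^p_{K_\eps}$.

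Inserting this into the sampling inequality for $\mi+\mi'$ on $\Bc^p_{K_\eps}$, restricting to $f\in\Bc^p_{K_{\eps/2}}$, and fixing $\delta$ so small that $C_0^pC_1\delta\meg 1/2$, the $\mi'_2$-term absorbs and one obtains
\[
\|f_0\|_{L^p(\Nc)}^p\meg 2C_0^p\bigl(\|f\|_{L^p(\mi)}^p+\|f\|_{L^p(\mi'_1)}^p\bigr)\qquad\text{for every }f\in\Bc^p_{K_{\eps/2}}.
\]
Because $\mi'_1$ is compactly supported, $M_{pK_{\eps/2},1}(\mi'_1)\in L^{\infty,1}_0(E\times F_\C)$, so by the second assertion of Proposition~\ref{prop:11bis} the inclusion $\Bc^p_{K_{\eps/2}}\hookrightarrow L^p(\mi'_1)$ is compact. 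A classical functional-analytic lemma (a bound $\|x\|_X\meg c(\|Ax\|_Y+\|Kx\|_Z)$ with $K$ compact forces $A$ to have finite-dimensional kernel and closed range, and becomes the estimate $\|x\|_X\meg c'\|Ax\|_Y$ whenever $\ker A=\Set{0}$) then reduces the problem to showing that the inclusion $A\colon\Bc^p_{K_{\eps/2}}\to L^p(\mi)$ has trivial kernel.

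The principal technical obstacle is this kernel-triviality. Suppose $f\in\Bc^p_{K_{\eps/2}}$ satisfies $\|f\|_{L^p(\mi)}=0$; applying the sampling inequality for $\mi+\mi'$ to $f\in\Bc^p_{K_\eps}$ and absorbing the $\mi'_2$-contribution as above yields $\|f_0\|^p\meg 2C_0^p\|f\|_{L^p(\mi'_1)}^p$. I would then apply the absorbed sampling inequality to the left-translates $L_{(\zeta_0,x_0)}f$, which all share the same $L^p(\Nc)$-norm, and use the pointwise decay at infinity along the $\Nc$-slices of Bernstein functions with $p<\infty$ (via the standard subharmonicity-based pointwise estimate, together with the compact support of $\mi'_1$) to force $\|L_{(\zeta_0,x_0)}f\|_{L^p(\mi'_1)}\to 0$ as $(\zeta_0,x_0)\to\infty$ by dominated convergence. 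Combining this with a careful analysis of $\|L_{(\zeta_0,x_0)}f\|_{L^p(\mi)}$ handling the possibly infinite total mass of $\mi$ (using the Carleson property of $\mi$ inherited from $\mi+\mi'$) then drives $\|f_0\|^p$ to zero, giving $f=0$. This translation-vanishing argument is the subtle technical crux.
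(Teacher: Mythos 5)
Your overall architecture (absorb the far part of $\mi'$ by sparseness via Proposition~\ref{prop:11bis}, treat the compactly supported near part $\mi'_1$ as a compact perturbation, reduce to triviality of the kernel of $\Bc^p_{K_{\eps/2}}(\Nc)\to L^p(\mi)$, and finish with Lemma~\ref{lem:2}) is sound and runs parallel to the paper's proof; your Fredholm-type packaging of the closed-range step is in fact a clean reformulation of the normal-families argument the paper uses. The gap is exactly where you place the ``technical crux'': the translation argument for kernel-triviality does not work. The hypothesis on $f$ is that $f=0$ $\mi$-almost everywhere, and this condition is \emph{not} translation-invariant: one has
\[
\norm{L_{(\zeta_0,x_0+i\Phi(\zeta_0))}f}_{L^p(\mi)}^p=\int \abs{f\big((\zeta_0,x_0+i\Phi(\zeta_0))^{-1}w\big)}^p\,\dd\mi(w),
\]
i.e.\ the integral of $\abs{f}^p$ against a \emph{translate} of $\mi$, and $f$ has no reason to vanish on the support of that translate. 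In the model case $\Nc=\R$ with $\mi$ a sum of Dirac deltas on a separated set and $f$ vanishing precisely on that set, the translates of $f$ take values comparable to $\norm{f_0}_{L^p}$ at the mass points, so $\norm{L_{(\zeta_0,x_0)}f}_{L^p(\mi)}$ stays bounded below (along subsequences) rather than tending to $0$. Your inequality then reads $\norm{f_0}^p\meg 2C_0^p(\text{bounded}+o(1))$ and yields no contradiction. (The $L^p(\mi'_1)$ term does tend to $0$ as you say, by the decay of $f$ on $\rho^{-1}(\text{bounded sets})$ and the compact support of $\mi'_1$; that part is fine.)

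The missing idea is a multiplication, not a translation. If $f\in\Bc^p_{K_{\eps/2}}(\Nc)\setminus\Set{0}$ vanishes $\mi$-a.e., then $fg$ also vanishes $\mi$-a.e.\ for every $g\in\Bc^\infty_{\overline B_{F'}(0,\eps/2)\cap\overline{\Lambda_+}}(\Nc)$, and $fg\in\Bc^p_{K_\eps}(\Nc)$ with $g\mapsto fg$ injective by holomorphy; hence the kernel of $\Bc^p_{K_\eps}(\Nc)\to L^p(\mi)$ would contain an infinite-dimensional subspace. This is incompatible with your own absorbed estimate $\norm{h_0}_{L^p(\Nc)}^p\meg 2C_0^p\norm{h}_{L^p(\mi'_1)}^p$ on that kernel, since $h\mapsto\norm{h}_{L^p(\mi'_1)}$ is a compact (quasi-)norm on $\Bc^p_{K_\eps}(\Nc)$ — equivalently, it contradicts the finite-dimensionality of the kernel delivered by your Fredholm lemma. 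The paper implements exactly this: it uses Riesz's lemma to produce a $\tfrac12$-separated sequence $f_0g^{(k)}_0$ of unit vectors, passes to a locally uniformly convergent subsequence via the continuous embedding $\Bc^p_{K_\eps}(\Nc)\hookrightarrow\Hol(E\times F_\C)$, and derives $\tfrac12\meg 2C_1\cdot o(1)$. With this substitution your proof closes; without it, the key step is unproven and the proposed route to it fails.
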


This result is based on~\cite[Lemma 4]{Lin}.

\begin{proof}
 Since $\mi+\mi'$ is $p$-sampling for $\Bc^p_{K_\eps}(\Nc)$, there is a constant $C_1>0$ such that
 \[
 \norm{f_0}_{L^p(\Nc)}^p\meg C_1 \norm{f}_{L^p(\mi+\mi')}^p= C_1(\norm{f}_{L^p(\mi)}^p+\norm{f}_{L^p(\mi')}^p )
 \]
 for every $f\in \Bc^p_{K_\eps}(\Nc)$. Choose $R>0$ so that for every $\eps>0$ there is $R'_\eps>0$ such that $M_{R}(\mi')\meg \eps$ on $(E\times F_\C)\setminus B((0,0),R'_\eps)$. 
 In addition, by Proposition~\ref{prop:11bis} there is a constant $C_2>0$ such that
 \[
 \norm{f}_{L^p(\mi'')}\meg C_2 \norm{M_R(\mi'') }_{L^\infty(\Nc)}^{1/p} \norm{f_0}_{L^p(\Nc)}
 \]
 for every $f\in \Bc^p_{K_\eps}(\Nc)$ and for every $\mi''\in \cM_+(E\times F_\C)$.
 Take $\eps>0$ so that $C_1 C_2^p \eps<\frac 1 2$, and observe that
 \[
 \begin{split}
  \norm{f}^p_{L^p(\mi')}\meg \int_{B((0,0),R)} \abs{f}^p\,\dd \mi'+  C_2^p \eps\norm{f_0}_{L^p(\Nc)}^p
 \end{split}
 \]
 for every $f\in  \Bc^p_{K_\eps}(\Nc)$, so that
 \[
 \norm{f_0}_{L^p(\Nc)}^p\meg 2C_1 \left(\int_{B((0,0),R)} \abs{f}^p\,\dd \mi'+ \norm{f}_{L^p(\mi)}^p\right) 
 \]
 for every $f\in  \Bc^p_{K_\eps}(\Nc)$. 
 Now, let us prove that the canonical mapping $\Bc^p_K(\Nc)\to L^p(\mi)$ is one-to-one. 
 Assume, by contradiction, that there is $f\in \Bc^p_K(\Nc)$ with $\norm{f_0}_{L^p(\Nc)}=1$ and $\norm{f}_{L^p(\mi)}=0$. 
 Observe that the mapping $\Bc^\infty_{\overline B_{F'}(0,\eps)\cap\overline{\Lambda_+}}(\Nc)\ni g\mapsto  f g\in \Bc^p_{K_\eps}(\Nc)$ is continuous and one-to-one by holomorphy. 
 By means of Riesz's lemma we may then find, by induction, a sequence $(g^{(k)})$ of elements of $\Bc^\infty_{\overline B_{F'}(0,\eps)\cap\overline{\Lambda_+}}(\Nc)$ such that the sequence $\norm{f_0 g^{(k)}_0}_{L^p(\Nc)}=1$ and $\norm{f_0 (g^{(k)}-g^{(k')})_0}_{L^p(\Nc)}\Meg \frac 1 2 $ for every $k\in \N$ and for every $k'<k$. 
 Since $ f g^{(k)}\in \Bc^p_{K_\eps}(\Nc)$ for every $k\in \N$ and since $\Bc^p_{K_\eps}(\Nc)$ embeds continuously in $\Hol(E\times F_\C)$ (cf.~\cite[Corollary 3.3]{Bernstein}), we may assume that $(f g^{(k)})$ converges locally uniformly on $E\times F_\C$, so that $\int_{B((0,0),R)} \abs{f g^{(k)}-f g^{(k+1)}}^p\,\dd \mi'\to 0$ for $k\to \infty$. 
 In addition, $\norm{f g^{(k)}}_{L^p(\mi)}=0$  for every $k\in \N$. Thus,
 \[
 \frac 1 2 \meg \norm{f_0 (g^{(k)}-g^{(k+1)})_0}_{L^p(\Nc)}\meg 2C_1 \left(\int_{B((0,0),R)} \abs{f(g^{(k)}-g^{(k+1)})}^p\,\dd \mi'\right) \to 0
 \]
 for $k\to \infty$: contradiction. Thus, the canonical mapping $\Bc^p_K(\Nc)\to L^p(\mi)$ is one-to-one.
 
 Now, assume by contradiction that the canonical mapping $\Bc^p_K(\Nc)\to L^p(\mi)$ is not an isomorphism onto its image. Then, there is a sequence $(f^{(j)})$ of elements of $\Bc^p_K(\Nc)$ such that $\norm{f^{(j)}_0}_{L^p(\Nc)}=1$ for every $j\in \N$, while $\norm{f^{(j)}}_{L^p(\mi)}\to 0$. As before, we may assume that $f^{(j)}\to f$ locally uniformly for some $f\in \Hol_K(E\times F_\C)$, so that $\norm{f_0}_{L^p(\Nc)}\meg 1 $, $f\in \Bc^p_K(\Nc)$, and $\norm{f}_{L^p(\mi)}=0$. Then, $f=0$. Therefore,
 \[
 1=\lim_{j\to \infty}\norm{f_0^{(j)}}_{L^p(\Nc)}^p\meg 2C_1 \lim_{j\to \infty} \left(\int_{B((0,0),R)} \abs{f^{(j)}}^p\,\dd \mi'+ \norm{f^{(j)}}_{L^p(\mi)}^p\right) =0,
 \]
 which is absurd. The proof is then completed by means of Lemma~\ref{lem:2}.
\end{proof}

We now prove a necessary condition for sampling measures, which is based on~\cite[Theorem 4.3]{Luecking3} (cf.~also~\cite[Proposition 7.2]{CP2}).

\begin{prop}\label{prop:12}
Take a compact subset $K$ of $\overline{\Lambda_+}$, $p\in (0,\infty)$, and $\mi\in \cM_+(E\times F_\C)$ such that $\rho(\supp{\mi})$ is bounded. 
Assume that  $\Bc^p_K(\Nc)\neq \Set{0}$ and that $\mi$ is a $p$-sampling measure for $\Bc^p_K(\Nc)$. Then, $M_1(\mi)$ is bounded and  there are $R,C>0$ such that 
\[
[M_{R}(\mi)]_0\Meg C
\] 
on $\Nc$.
\end{prop}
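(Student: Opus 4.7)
The upper bound on $M_1(\mi)$ is immediate: a $p$-sampling measure is in particular $p$-Carleson for $\Bc^p_K(\Nc)$, so since $\Bc^p_K(\Nc)\neq\Set{0}$ and $\rho(\supp{\mi})$ is bounded, Corollary~\ref{cor:1} (applied with $q=p$) yields $M_1(\mi)\in L^\infty(E\times F_\C)$.

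For the lower bound on $[M_R(\mi)]_0$ I would argue by contradiction. If the conclusion failed, I could pick $R_n\to\infty$, positive $C_n\downarrow 0$ (to be chosen), and points $(\zeta_n,x_n)\in\Nc$ with $\mi(\overline B((\zeta_n,x_n+i\Phi(\zeta_n)),R_n))<C_n$. Let $\mi_n$ denote the pushforward of $\mi$ under the left $\Nc$-action of $(\zeta_n,x_n)^{-1}$ on $E\times F_\C$. Three left-invariance facts are crucial here: $d$ is left-$\Nc$-invariant by construction, $\rho$ is left-$\Nc$-invariant (since $\cM=\rho^{-1}(0)=\Nc\cdot(0,0)$ is a single orbit), and $f\mapsto L_{(\zeta_n,x_n)}f$ is an isometry of $\Bc^p_K(\Nc)$ onto itself that intertwines the $L^p(\mi)$-quasi-norm with the $L^p(\mi_n)$-quasi-norm. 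Consequently all the $\mi_n$ are $p$-sampling for $\Bc^p_K(\Nc)$ with the \emph{same} constant $C$ as $\mi$, $\norm{M_1(\mi_n)}_{L^\infty}=\norm{M_1(\mi)}_{L^\infty}$, $\rho(\supp{\mi_n})\subseteq \overline B_F(0,R_\mi)$ for a fixed $R_\mi>0$, while $\mi_n(\overline B((0,0),R_n))<C_n$ by construction.

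Next, fix any non-zero $f\in\Bc^p_K(\Nc)$ (which exists by hypothesis), and split
\[
\norm{f}_{L^p(\mi_n)}^p=\int_{B((0,0),R_n/2)}\abs{f}^p\,\dd\mi_n+\int_{B((0,0),R_n/2)^c}\abs{f}^p\,\dd\mi_n.
\]
The near term I control via plurisubharmonicity, $\abs{f(\zeta,z)}^p\meg C_0\int_{B((\zeta,z),1)}\abs f^p\,\dd m$ (as in the proof of Proposition~\ref{prop:11bis}), combined with Fubini and the bound $\mi_n(B(\cdot,1))\meg C_n$ valid for all points whose unit ball is contained in $B((0,0),R_n)$; this yields
\[
\int_{B((0,0),R_n/2)}\abs{f}^p\,\dd\mi_n\meg C_0\,C_n\int_{B((0,0),R_n/2+1)}\abs{f}^p\,\dd m,
\]
and fibering over $h=\rho(\zeta,z)$ with the bound $\norm{f_h}_{L^p(\Nc)}\meg \ee^{H_K(h)}\norm{f_0}_{L^p(\Nc)}$ (cf.~\cite[Theorem~1.7]{Bernstein}) majorizes the right-hand side by $C_n\,I(R_n)\norm{f_0}_{L^p(\Nc)}^p$ for a finite function $I$ of $R$ depending only on $f$. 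Choosing $C_n\meg 1/(n\,I(R_n))$ in the construction forces this term to vanish as $n\to\infty$. For the far term, the same subharmonicity/Fubini argument, this time with the uniform bound $\mi_n(B(\cdot,1))\meg \norm{M_1(\mi)}_{L^\infty}$ and the inclusion $\supp{\mi_n}\subseteq\rho^{-1}(\overline B_F(0,R_\mi))$, gives
\[
\int_{B((0,0),R_n/2)^c}\abs{f}^p\,\dd\mi_n\meg C_1\int_{B((0,0),R_n/2-1)^c\cap\rho^{-1}(\overline B_F(0,R_\mi+1))}\abs{f}^p\,\dd m;
\]
since $f$ is Lebesgue-$L^p$ on the tube $\rho^{-1}(\overline B_F(0,R_\mi+1))$ (again by the fiberwise bound and the continuity of $H_K$ on bounded sets) and these sets decrease to $\emptyset$, dominated convergence kills this term too. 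Hence $\norm{f}_{L^p(\mi_n)}\to 0$ while $\norm{f_0}_{L^p(\Nc)}$ is a fixed positive constant, contradicting the uniform sampling inequality $\norm{f_0}_{L^p(\Nc)}\meg C\norm{f}_{L^p(\mi_n)}$.

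The main technical hurdle is the translation bookkeeping: verifying that left-translation by $(\zeta_n,x_n)\in\Nc$ preserves both the sampling and the Carleson constants, as well as the $\rho$-support hypothesis, and then choosing $C_n$ small enough (depending on $R_n$ and on the fixed function $f$) to override the possibly exponential growth of $\int_{B_F(0,R_n/2+1)}\ee^{pH_K(h)}\,\dd h$ as $R_n\to\infty$.
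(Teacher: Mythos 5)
Your proof is correct and is essentially the paper's argument in contrapositive form: the paper translates the test function $f$ to each point $(\zeta,x)$, lower-bounds $\norm{f^{(\zeta,x)}}_{L^p(\mi)}$ by the sampling inequality, and upper-bounds the contribution outside $B((\zeta,x+i\Phi(\zeta)),R''+1)$ via Proposition~\ref{prop:11bis}, which is exactly your near/far splitting with the function translated instead of the measure. The only cosmetic differences are that you obtain the smallness of the far term by dominated convergence of $\abs{f}^p$ on the tube $\rho^{-1}(\overline B_F(0,R_\mi+1))$ rather than by the compactness of the family $(f_h)_{\abs{h}\meg R'+1}$ in $L^p(\Nc)$, and that your indirect formulation requires the (correctly handled) choice of $C_n$ depending on $R_n$, whereas the paper's direct version produces an explicit pair $(R,C)$.
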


\begin{proof}
 The first assertion follows from Corollary~\ref{cor:1}.
 Take $R'>0$ such that $\rho(\supp{\mi})\subseteq \overline B_F(0,R')$.
By  Proposition~\ref{prop:11bis},  there is a constant $C_1>0$ such that
\[
\norm{f}_{L^p(\mi')}\meg C_1 \norm{M_{1}(\mi')}_{L^\infty(E\times F_\C)}^{1/p} \sup_{\abs{h}\meg R'+1}\norm{[\chi_{B(\supp{\mi'},1)}f]_h}_{L^p(\Nc)}
\]
for every $\mi'\in \cM_+(E\times F_\C)$ with $\rho(\supp{\mi'})\subseteq \overline B_F(0,R')$, and for every $f\in \Bc^p_K(\Nc)$.  
Then, 
\[
\norm{(1-\chi_{ B((\zeta,z),R''+1)}) f}_{L^p(\mi)}\meg C_1 \norm{M_{1}(\mi)}_{L^\infty(E\times F_\C)}^{1/p} \sup_{\abs{h}\meg R'+1}\norm{[(1-\chi_{ B((\zeta,z),R'')}) f]_h}_{L^p(\Nc)}
\]
for every $R''>0$, for every $(\zeta,z)\in E\times F_\C$, and for every $f\in \Bc^p_K(\Nc)$. 
Furthermore, by assumption, there is a constant $C'>0$ such that
\[
\norm{f}_{\Bc^p_K(\Nc)}\meg C' \norm{f}_{L^p(\mi)}
\]
for every $f\in \Bc^p_K(\Nc)$.

Then, take $f\in \Bc^p_K(\Nc)$ so that $\norm{f}_{\Bc^p_K(\Nc)}=1$ (cf.~\cite[Proposition 3.4]{Bernstein}), and define $f^{(\zeta,x)}\coloneqq  L_{(\zeta,x+i\Phi(\zeta))} f$ for every $(\zeta,x)\in \Nc$. Define $C''\coloneqq \sup_{\abs{h}\meg R'}\norm{f_h}_{L^\infty(\Nc)}$, so that $C''<\infty$ by~\cite[Theorem 3.2]{Bernstein}.
Therefore,
\[
\begin{split}
 C''^p [M_{R''+1}(\mi)]_0(\zeta,x)&\Meg  \int_{B((\zeta,x+i\Phi(\zeta)),R''+1)} \abs{f^{(\zeta,x)}}^p\,\dd \mi\\
 &=\norm{f^{(\zeta,x)}}_{L^p(\mi)}^p-\norm{(1-\chi_{B((\zeta,x+i \Phi(\zeta)), R''+1)}) f^{(\zeta,x)}}_{L^p(\mi)}^p\\
 &\Meg \frac{1}{C'^p} -C_1^p \norm{M_{1}(\mi)}_{L^\infty(\Nc)} \sup_{\abs{h}\meg R'+1} \norm{[(1-\chi_{ B((\zeta,x+i \Phi(\zeta)),R'')})  f^{(\zeta,x)}]_h}_{L^p(\Nc)}^p\\
 &=\frac{1}{C'^p} -C_1^p \norm{M_{1}(\mi)}_{L^\infty(\Nc)} \sup_{\abs{h}\meg R'+1}\norm{[(1-\chi_{ B((0,0),R'')})  f]_h}_{L^p(\Nc)}^p
\end{split}
\]
for every $R''>0$ and for every $(\zeta,x)\in \Nc$. Then, choose $R''>0$ so that\footnote{Notice that this is possible since the set of $f_h$, $h\in \overline B_F(0,R'+1)$ is \emph{compact} in $L^p(\Nc)$, as the mapping $F\ni h\mapsto f_h\in L^p(\Nc)$ is continuous.} 
\[
\sup_{\abs{h}\meg R'+1}\norm{[(1-\chi_{ B((0,0),R'')})  f]_h}_{L^p(\Nc)}< \frac{1}{C' C_1\norm{M_{1}(\mi)}_{L^\infty(\Nc)}^{1/p}},
\]
whence the result.
\end{proof}

We now provide an abstract, yet quite powerful, sufficient condition for sampling measures, based on~\cite[Theorem 5]{Luecking4} (cf.~also~\cite[Theorem 7.9]{CP2}). We shall then draw several corollaries.

\begin{deff}
Take $\mi \in \cM_+(E\times F_\C)$. Then, we define $W(\mi)$ as the closure of $\Set{L_{(\zeta,x+i \Phi(\zeta))}\mi\colon (\zeta,x)\in \Nc}$ in the vague topology.
\end{deff}

\begin{teo}\label{teo:1}
Take a compact subset $K$ of $\overline{\Lambda_+}$, $p,q\in (0,\infty)$ with $q\meg p$,  $\eps>0$, and a $p$-Carleson measure $\mi$ for $\Bc^p_K(\Nc)$ such that $\rho(\supp{\mi})$ is bounded. Assume that the support of every element of $W(\mi)$ is a set of uniqueness for $\Bc^q_{K_\eps}(\Nc) $. 
Then, $\mi$ is a strongly $p$-sampling measure for $\Bc^p_K(\Nc)$.
\end{teo}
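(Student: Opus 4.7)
The plan is a proof by contradiction along the lines of~\cite[Theorem 5]{Luecking4} and~\cite[Theorem 7.9]{CP2}, combining a normal-family extraction of translates with the uniqueness hypothesis on $W(\mi)$. First, I would reduce to proving $p$-sampling (rather than strong $p$-sampling) for a slightly larger Bernstein space. Applying Lemma~\ref{lem:2} with $K$ and $\eps$ replaced by $K_{\eps/2}$ and $\eps/2$, one has $(K_{\eps/2})_{\eps/2}=K_\eps$ (since $A+A=2A$ for the convex set $A=\overline B_{F'}(0,\eps/2)\cap \overline{\Lambda_+}$), so the hypothesis on $W(\mi)$ still supplies uniqueness in the relevant Bernstein space, and Corollary~\ref{cor:1} shows that $\mi$ remains $p$-Carleson for $\Bc^p_{K_{\eps/2}}$ (both conditions reducing to $M_1(\mi)\in L^\infty(E\times F_\C)$ under the bounded-$\rho(\supp\mi)$ assumption).

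Next, I would assume, for contradiction, a sequence $(f_j)\subset \Bc^p_{K_{\eps/2}}(\Nc)$ with $\norm{f_{j,0}}_{L^p(\Nc)}=1$ and $\norm{f_j}_{L^p(\mi)}\to 0$, and fix a multiplier $\varphi\in \Bc^\infty_{\overline B_{F'}(0,\eps/2)\cap \overline{\Lambda_+}}(\Nc)$ with $\varphi_0\in \Sc(\Nc)$ and $\varphi(0,0)=1$. The products $\varphi f_j$ lie in $\Bc^q_{K_\eps}(\Nc)$, and here the hypothesis $q\meg p$ enters crucially via H\"older's inequality: defining $a\in(0,\infty]$ by $1/a=1/q-1/p$, the Schwartz decay of $\varphi_0$ yields $\norm{(\varphi f_j)_0}_{L^q(\Nc)}\meg \norm{\varphi_0}_{L^a(\Nc)}\norm{f_{j,0}}_{L^p(\Nc)}$, uniformly in $j$. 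I would then pick $w_j=(\zeta_j,x_j+i\Phi(\zeta_j))\in \Nc$ with $\abs{(\varphi f_j)(w_j)}\Meg c>0$ uniformly, set $g_j:=L_{w_j^{-1}}(\varphi f_j)$ and $\mi_j:=L_{w_j^{-1}}\mi\in W(\mi)$, and observe that $\norm{g_j}_{L^p(\mi_j)}=\norm{\varphi f_j}_{L^p(\mi)}\lesssim \norm{f_j}_{L^p(\mi)}\to 0$ (since $\varphi$ is bounded on the bounded strip containing $\supp\mi$).

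A standard normal-family argument (Montel plus Fatou on the uniform $L^q$-bound) then extracts a subsequence with $g_j\to g$ locally uniformly and $g\in \Bc^q_{K_\eps}(\Nc)$; simultaneously, vague compactness gives $\mi_j\to \nu$ vaguely with $\nu\in W(\mi)$, using $\norm{M_1(\mi_j)}_{L^\infty}=\norm{M_1(\mi)}_{L^\infty}$. Combining both convergences, for every $\phi\in C_c(E\times F_\C)$,
\[
\int \phi\,\abs{g}^p\,\dd \nu=\lim_j\int \phi\,\abs{g_j}^p\,\dd \mi_j\meg \liminf_j \norm{g_j}^p_{L^p(\mi_j)}=0,
\]
so $g\equiv 0$ on $\supp\nu$; the uniqueness hypothesis forces $g\equiv 0$, contradicting $\abs{g(0,0)}=\lim_j \abs{(\varphi f_j)(w_j)}\Meg c>0$.

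The main obstacle will be the construction of $w_j$ with $\abs{(\varphi f_j)(w_j)}\Meg c$ uniformly. When $\sup_\Nc \abs{f_j}$ stays bounded below, one picks $w_j$ where $\abs{f_j}$ nearly attains its supremum. In the diffuse regime $\sup_\Nc \abs{f_j}\to 0$, I would instead apply the Plancherel--P\'olya $\ell^p$-sampling inequality on a $(\delta,R)$-lattice $(w_k)$ of $\Nc$, which yields $\sum_k \abs{f_j(w_k)}^p \gtrsim 1$; selecting $w_j$ as a lattice point capturing a uniform fraction of this sum (and renormalising by the local $L^p$-energy) produces a non-trivial limit $g\in \Bc^q_{K_\eps}$, where $q\meg p$ is again used to preserve the $L^q$-bound under renormalisation. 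The subcase $\nu=0$ is excluded directly by the uniqueness hypothesis, since $\supp\nu=\emptyset$ cannot be a uniqueness set for the non-trivial space $\Bc^q_{K_\eps}(\Nc)$.
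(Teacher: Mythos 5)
Your overall strategy (contradiction, translation, normal families, vague compactness of $W(\mi)$, and the uniqueness hypothesis) is in the same spirit as the paper's, and several ingredients are sound: the reduction via Lemma~\ref{lem:2} applied to $K_{\eps/2}$, the uniform $L^q$-bound on $(\varphi f_j)_0$ via H\"older (this is indeed where $q\meg p$ enters), the identity $\norm{M_1(\mi_j)}_{L^\infty}=\norm{M_1(\mi)}_{L^\infty}$, and the passage to the limit $\int\phi\abs{g}^p\,\dd\nu=\lim_j\int\phi\abs{g_j}^p\,\dd\mi_j$. However, there is a genuine gap exactly where you flag "the main obstacle": the construction of points $w_j$ with $\abs{(\varphi f_j)(w_j)}\Meg c>0$ \emph{uniformly}. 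In the diffuse regime the Plancherel--P\'olya inequality gives $\sum_k\abs{f_j(w_k)}^p\gtrsim 1$, but when the mass is spread over $N_j\to\infty$ lattice points of comparable size, no single point captures a uniform fraction of the sum, so no such $w_j$ exists. Your fallback --- renormalising by the local $L^p$-energy --- destroys the argument: dividing $g_j$ by $c_j:=\abs{(\varphi f_j)(w_j)}\to 0$ either blows up the $L^q$-bound (so the limit need not lie in $\Bc^q_{K_\eps}(\Nc)$) or loses the conclusion $\norm{g}_{L^p(\nu)}=0$, since $\norm{g_j}_{L^p(\mi_j)}/c_j$ is no longer known to tend to $0$. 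One cannot have all three properties ($g\in\Bc^q_{K_\eps}$, $g=0$ on $\supp\nu$, $g(0,0)\neq 0$) simultaneously by a single global normalisation.

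This is precisely the difficulty that Luecking's localisation, which the paper follows, is designed to circumvent. Instead of one distinguished point per function, the paper works with the family of \emph{all} points $(\zeta,x)$ at which $\abs{f(\zeta,x)}\Meg\eps'\norm{f_0L_{(\zeta,x)}g_0}_{L^q(\Nc)}$ (the complement of the bad set $B^{q,g}_{\eps',f}$): Lemma~\ref{lem:6} runs your compactness argument with the \emph{local} normalisation $\norm{f^{(j)}_0g_0}_{L^q(\Nc)}=1$, for which the defining inequality of $U_{\eps'}(\zeta,x)$ guarantees the nondegeneracy $\abs{h(0,0)}\Meg\eps'\abs{g(0,0)}>0$ of the limit; Lemma~\ref{lem:7} (a convolution/Young estimate on $L^{p/q}(\Nc)$) shows that the bad set carries an arbitrarily small fraction of $\norm{f_0}_{L^p}^p$, which is how the diffuse case is absorbed; and a final Jensen--Schur step, using the Carleson hypothesis, bounds $\norm{(\zeta,x)\mapsto\norm{fL_{(\zeta,x+i\Phi(\zeta))}g}_{L^q(\mi)}}_{L^p(\Nc)}$ by $\norm{f}_{L^p(\mi)}$. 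To repair your proof you would need to replace the single-point extraction by this localised, integrated version; as written, the argument does not close.
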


Before we pass to the proof, we need some lemmas.

\begin{lem}\label{lem:3}
Let $\Ms$ be a subset of $\cM_+(E\times F_\C)$ such that
\[
\bigcup_{\mi \in \Ms} \rho(\supp{\mi})
\]
is bounded, and
\[
\sup_{\mi \in \Ms} \norm{M_{1}(\mi)}_{L^{\infty}(E\times F_\C)}<\infty.
\]
Then,
\[
\lim_{\mi,\Ff} \norm{f}_{L^p(\mi)}=\norm{f}_{L^p(\mi_0)}
\]
for every filter $\Ff$ on $\Ms$ which converges vaguely to some Radon measure $\mi_0$ on $E\times F_\C$,  for every $f\in \Bc^p_K(\Nc)$, for every compact subset $K$ of $\overline{\Lambda_+}$, and for every $p\in (0,\infty)$.
\end{lem}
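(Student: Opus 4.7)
My plan is to reduce the statement to a uniform tightness estimate and then combine it with ordinary vague convergence on compactly supported continuous test functions. First, I would observe that the vague limit $\mi_0$ automatically inherits the two hypotheses: since every $\mi \in \Ms$ is supported in the common closed set $A := \rho^{-1}(\overline B_F(0,R'))$ for some uniform $R'>0$, so is $\mi_0$; and by approximating the indicator of $\overline B((\zeta,z),1)$ from above by continuous functions supported in $B((\zeta,z),1+\eps)$, together with Step I of (the proof of) Lemma~\ref{lem:1} applied with $p=q=\infty$, one gets $\norm{M_1(\mi_0)}_{L^\infty(E \times F_\C)} \leq C \sup_{\mi \in \Ms} \norm{M_1(\mi)}_{L^\infty(E\times F_\C)} < \infty$. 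Hence I may work with $\Ms \cup \{\mi_0\}$, which still satisfies both hypotheses.

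The heart of the proof is a uniform tightness estimate: for every $\eps > 0$ I want a compact $L \subseteq E \times F_\C$ such that $\int_{L^c} |f|^p \,\dd\mi \leq \eps$ for every $\mi \in \Ms \cup \{\mi_0\}$. To obtain it, I would apply Proposition~\ref{prop:11bis} with $q = p$ to the truncated measures $\mi \lfloor L^c$, whose $\rho$-support remains inside $\overline B_F(0,R')$. Since $(p/p)' = \infty$ and the $h$-integration defining the $L^{\infty,1}$ norm only sees $|h| \leq R'+1$, the Carleson-type constant $\norm{M_{pK,1}(\mi \lfloor L^c)}_{L^{\infty,1}}$ collapses to an essentially $L^\infty$ bound, uniformly controlled by a constant multiple of $\sup_{\mi \in \Ms}\norm{M_1(\mi)}_\infty$. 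The proposition then yields
\[
\int_{L^c} |f|^p \,\dd\mi \leq C' \sup_{|h| \leq R'+R} \norm{(\chi_{B(\supp(\mi \lfloor L^c), R)} f)_h}_{L^p(\Nc)}^p .
\]
The set $B(\supp(\mi\lfloor L^c),R)$ lies in the $R$-neighbourhood of $L^c$, whose $h$-slice in $\Nc$ escapes to infinity uniformly as $L$ exhausts $E \times F_\C$. Since $\{f_h : |h| \leq R'+R\}$ is the image of a compact set in $F$ under the continuous map $h\mapsto f_h$ into $L^p(\Nc)$, it is compact in $L^p(\Nc)$, hence $L^p$-equi-integrable; this makes the above supremum vanish uniformly in $\mi$ as $L$ grows, giving the desired tightness. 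This combination of uniform Carleson control with equi-integrability of the slice family is the step I expect to be the main obstacle, because one has to verify cleanly that the $d$-geometry on $E\times F_\C$ propagates the growth of $L$ into genuinely large subsets of $\Nc$ simultaneously for every $h$ in the bounded range.

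Once tightness is available, fix $\eps>0$, pick $L$ accordingly, and choose $\varphi \in C_c(E\times F_\C)$ with $0\leq \varphi \leq 1$ and $\varphi \equiv 1$ on $L$. Then $|f|^p\varphi$ is continuous and compactly supported, so vague convergence yields $\int |f|^p \varphi\,\dd\mi \to \int |f|^p \varphi\,\dd\mi_0$ along $\Ff$. The triangle inequality
\[
\abs*{\int |f|^p\,\dd\mi - \int |f|^p\,\dd\mi_0} \leq \int_{L^c} |f|^p\,\dd\mi + \int_{L^c} |f|^p\,\dd\mi_0 + \abs*{\int |f|^p\varphi\,\dd\mi - \int |f|^p\varphi\,\dd\mi_0}
\]
together with the tightness bound gives $\limsup_{\mi,\Ff}\abs{\int |f|^p\,\dd\mi - \int |f|^p\,\dd\mi_0} \leq 2\eps$, and letting $\eps\to 0$ yields $\lim_{\mi,\Ff}\norm{f}_{L^p(\mi)} = \norm{f}_{L^p(\mi_0)}$ by continuity of $t\mapsto t^{1/p}$.
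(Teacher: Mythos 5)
Your argument is correct and follows essentially the same route as the paper's proof: a uniform tail estimate over $\Ms\cup\{\mi_0\}$ obtained from Proposition~\ref{prop:11bis} (with $q=p$, where the boundedness of $\rho(\supp\mi)$ collapses the Carleson constant to an $L^\infty$ bound), combined with the compactness of $\{f_h\colon \abs{h}\meg R'+1\}$ in $L^p(\Nc)$ to make that tail uniformly small, and then a continuous cutoff plus vague convergence. The only cosmetic differences are that you phrase the tail bound as a tightness statement for general compact exhausting sets $L$ (the paper simply uses the $d$-balls $B((0,0),R)$, which are compact) and that you spell out why $\mi_0$ inherits the two hypotheses, which the paper leaves implicit.
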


The proof is based on~\cite[Theorem 1]{Luecking4} (cf.~also~\cite[Lemma 7.10]{CP2}).

\begin{proof}
Fix $p\in (0,\infty)$ and $R'>0$ so that $\rho(\supp{\mi})\subseteq \overline B_F(0,R')$ for every $\mi \in \Ms\cup \Set{\mi_0}$.
By Lemma~\ref{lem:1} and Proposition~\ref{prop:11bis}, there is a constant $C>0$ such that
\[
\norm{(1-\chi_{ B((0,0),R+1)}) f}_{L^p(\mi)}\meg C \sup_{\abs{h}\meg R'+1} \norm{[(1-\chi_{ B((0,0),R)}) f]_h}_{L^p(\Nc)}
\]
for every $\mi\in \Ms\cup\Set{\mi_0}$, for every $f\in \Bc^p_K(\Nc)$, and for every $R>0$ (cf.~the proof of Proposition~\ref{prop:12}). 
Then, fix $f\in \Bc^p_K(\Nc)$, and take $\eps>0$. Fix $R>1$ so that $ \norm{[(1-\chi_{ B((0,0),R)}) f]_h}_{L^p(\Nc)}\meg\eps$ for every $h\in \overline B_F(0,R'+1)$ (cf.~the proof of Proposition~\ref{prop:12}), and choose $\tau\in C^\infty(E\times F_\C)$ so that $\chi_{B((0,0),R+1)}\meg \tau \meg \chi_{B((0,0),R+2)}$. Then, 
\[
\begin{split}
 \limsup_{\mi, \Ff} \abs*{\norm{f}_{L^p(\mi)}^p-\norm{f}_{L^p(\mi_0)}^{p}}\meg  (C\eps)^p+\limsup_{\mi, \Ff} \abs*{\norm{\tau f}_{L^p(\mi)}^{p}-\norm{\tau f}_{L^p(\mi_0)}^{p}}=\eps
\end{split}
\]
for every $\eps>0$, whence the result.
\end{proof}

\begin{lem}\label{lem:6}
Take $p\in (0,\infty)$, two  compact subsets $K,K'$ of $\overline{\Lambda_+}$, $g\in \Hol_{K'}(E\times F_\C)$ with $g(0,0)\neq 0$, and $\mi \in \cM_+(\Nc)$ such that $\rho(\supp{\mi})$ is bounded in $F$, $M_{1}(\mi)$ is bounded, and such that the support of every element of $W(\mi)$ is a set of uniqueness for $\Bc^q_{K+K'}(\Nc)$. 
Take $\eps>0$, and define, for every $(\zeta,x)\in \Nc$,
\[
U_\eps(\zeta,x)\coloneqq \Set{f\in \Hol_K(E\times F_\C)\colon  \abs{f_0(\zeta,x)}\Meg\eps \norm{f_0 L_{(\zeta,x)}g_0}_{L^p(\Nc)}}.
\]

Then, there is  a constant $C>0$ such that
\[
\norm{f L_{(\zeta,x+i\Phi(\zeta))}g }_{L^{p}(\mi')}\Meg C\norm{f_0 L_{(\zeta,x)} g_0}_{L^p(\Nc)}
\]
for every $(\zeta,x)\in \Nc$, for every $f\in U_\eps(\zeta,x)$ and for every $\mi'\in W(\mi)$.
\end{lem}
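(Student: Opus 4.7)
The plan is to argue by contradiction. Suppose no such $C>0$ exists; then for each $k\in\N$ one finds $(\zeta_k,x_k)\in\Nc$, $f^{(k)}\in U_\eps(\zeta_k,x_k)$, and $\mi'_k\in W(\mi)$ with
\[
\norm{f^{(k)} L_{(\zeta_k,x_k+i\Phi(\zeta_k))}g}_{L^p(\mi'_k)} < \tfrac{1}{k} \norm{f^{(k)}_0 L_{(\zeta_k,x_k)} g_0}_{L^p(\Nc)}.
\]
After rescaling $f^{(k)}$, normalize so that $\norm{f^{(k)}_0 L_{(\zeta_k,x_k)} g_0}_{L^p(\Nc)}=1$; by definition of $U_\eps(\zeta_k,x_k)$ this also gives $\abs{f^{(k)}_0(\zeta_k,x_k)}\Meg \eps$.

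Next I would reduce to the case $(\zeta_k,x_k)=(0,0)$ by left translation. Setting $a_k\coloneqq(\zeta_k,x_k+i\Phi(\zeta_k))$, $\tilde f^{(k)}\coloneqq L_{a_k^{-1}}f^{(k)}\in \Hol_K(E\times F_\C)$, and $\nu_k\coloneqq L_{a_k^{-1}}\mi'_k$, a direct computation using the left-invariance of Haar measure and the identity $(L_{a}g)_0=L_{(\zeta,x)}g_0$ for $a=(\zeta,x+i\Phi(\zeta))$ yields
\[
\abs{\tilde f^{(k)}(0,0)}\Meg \eps,\qquad \norm{\tilde f^{(k)}_0 g_0}_{L^p(\Nc)}=1,\qquad \norm{\tilde f^{(k)}g}_{L^p(\nu_k)}\meg \tfrac{1}{k},
\]
while $\nu_k\in W(\mi)$ because the vague closure of the $\Nc$-orbit of $\mi$ is itself $\Nc$-invariant. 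Note also that $\tilde f^{(k)}g\in \Bc^p_{K+K'}(\Nc)$ with norm exactly $1$.

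Now the plan is to extract convergent subsequences. The set $W(\mi)$ is vaguely compact: the boundedness of $M_1(\mi)$ and the inequality $\norm{M_1(L_a\mi)}_{L^\infty}\meg \norm{M_1(\mi)}_{L^\infty}$ yield uniform mass bounds on compact subsets of $E\times F_\C$, which pass to vague limits. Hence a subsequence $\nu_{k_j}\to\nu$ vaguely with $\nu\in W(\mi)$. Simultaneously, since $\Bc^p_{K+K'}(\Nc)\hookrightarrow \Hol(E\times F_\C)$ continuously by~\cite[Corollary 3.3]{Bernstein}, Montel's theorem (after a further subsequence) yields a locally uniform limit $\tilde f^{(k_j)}g\to h\in\Hol(E\times F_\C)$. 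Passing to the limit in the uniform growth estimates from~\cite[Theorem 1.7]{Bernstein} and applying Fatou's lemma to $\abs{h_0}^p$ gives $h\in \Bc^q_{K+K'}(\Nc)$, and local uniform convergence at the origin preserves the lower bound $\abs{h(0,0)}\Meg \eps\abs{g(0,0)}>0$, so $h\not\equiv 0$.

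Finally I would show $h$ vanishes on $\supp\nu$, contradicting the uniqueness hypothesis. For any $\varphi\in C_c(E\times F_\C)$ with $\varphi\Meg 0$, the combination of vague convergence $\nu_{k_j}\to\nu$ with the locally uniform convergence $\abs{\tilde f^{(k_j)}g}^p\to\abs{h}^p$ on $\supp\varphi$ (justified via the uniform bound on $\sup_k\nu_k(\supp\varphi)$) gives
\[
\int \varphi\,\abs{h}^p\,\dd\nu=\lim_j\int \varphi\,\abs{\tilde f^{(k_j)}g}^p\,\dd\nu_{k_j}\meg \norm{\varphi}_\infty \limsup_j\norm{\tilde f^{(k_j)}g}_{L^p(\nu_{k_j})}^p=0,
\]
so $h=0$ $\nu$-almost everywhere, and hence identically on $\supp\nu$ by continuity of $h$. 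Since $\supp\nu$ is a set of uniqueness for $\Bc^q_{K+K'}(\Nc)$ by hypothesis, $h\equiv 0$, contradicting $\abs{h(0,0)}>0$. The main obstacle is the joint limit passage involving both the varying functions $\tilde f^{(k)}g$ and the varying measures $\nu_k$; the cleanest way to handle it is to pair the local uniform convergence of the integrands against $\varphi$ with the uniform $L^\infty$ control of $M_1(\nu_k)$ inherited from $\mi$, which is precisely the hypothesis of the lemma.
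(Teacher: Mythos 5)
Your proof is correct and follows essentially the same route as the paper's: a normalization, a contradiction sequence, vague compactness of $W(\mi)$ together with a normal-families argument to produce a limit measure $\nu$ and a limit function $h$ with $\abs{h(0,0)}\Meg\eps\abs{g(0,0)}>0$, a joint limit passage pairing locally uniform convergence of the integrands against vaguely convergent measures, and finally the uniqueness-set hypothesis to force $h=0$. The only organizational difference is that you translate both the function and the measure and use the translation-invariance of $W(\mi)$ directly, whereas the paper first reduces from $W(\mi)$ to the orbit $\Set{L_{(\zeta,x+i\Phi(\zeta))}\mi}$ via Lemma~\ref{lem:3} and fixes the base point at the origin; both devices are legitimate and accomplish the same reduction.
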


The proof is based on~\cite[Lemma 4]{Luecking4} (cf.~also~\cite[Lemma 7.11]{CP2}).

\begin{proof}
By the left invariance of $W(\mi)$, we may reduce to proving the assertion for $(\zeta,x)=(0,0)$.
Define
\[
\mi_{(\zeta,x)}\coloneqq L_{(\zeta,x+i\Phi(\zeta))} \mi
\]
for every $(\zeta,x)\in \Nc$.
Observe that Lemma~\ref{lem:3} implies that it will suffice to prove the assertion with $W(\mi)$ replaced by $\Set{\mi_{(\zeta,x)}\colon (\zeta,x)\in \Nc}$.
Then assume, by contradiction, that there are a sequence $(f^{(j)})$ of elements of $U_\eps(0,0)$, and a sequence $(\zeta_j,x_j)_j$ of elements of $\Nc$ such that 
\[
\norm{f^{(j)}_0 g_0}_{L^p(\Nc)}=1
\]
for every $j\in \N$, while
\[
\lim_{j\to \infty }\norm{f^{(j)} g  }_{L^{p}(\mi_{(\zeta_j,x_j)})}=0.
\]
Observe that 
\[
\norm{M_{1}(\mi_{(\zeta,x)})}_{L^\infty(\Nc)}= \norm{M_{1}(\mi)}_{L^\infty(\Nc)}
\]
for every $(\zeta,x)\in \Nc$, so that $W(\mi)$ is bounded, hence compact and
metrizable, in the vague topology. Therefore, we may assume that
$(\mi_{(\zeta_j,x_j)})  $ converges vaguely to some (positive Radon)
measure $\mi'$ on $D$. Analogously, since $f^{(j)} g$ is bounded in $\Bc_{K+K'}^p(\Nc)$, and since $\Bc^p_{K+K'}(\Nc)$ embeds continuously into $\Hol(E\times F_\C)$ (cf.~\cite[Corollary 3.3]{Bernstein}), we may assume that $(f^{(j)} g)$ converges locally uniformly to some $h\in \Bc_{K+K'}^p(\Nc)$, so
that $\abs{h(0,0)}\Meg \eps \abs{g(0,0)}>0$. Let $(\psi_k)_{k\in K}$ be a
partition of the unity on $E\times F_\C$ whose elements belong to $C_c(\Nc)$, and
observe that 
\[
\lim_{j\to \infty }\norm{\psi_k^{1/p} f^{(j)} g }_{L^{p}(\mi_{(\zeta_j,x_j)})}= \norm{\psi_k^{1/p} h}_{L^{p}(\mi')}
\]
by the previous remarks. Therefore, by Fatou's lemma,
\[
\begin{split}
 0&= \lim_{j\to \infty }\norm{f^{(j)} g }_{L^{p}(\mi_{(\zeta_j,x_j)})}^{p}\\
 &= \lim_{j\to \infty }\sum_{k\in K}\norm{\psi_k^{1/p} f^{(j)} g }_{L^{p}(\mi_{(\zeta_j,x_j)})}^{p}\\
 &\Meg \sum_{k\in K}\norm{\psi_k^{1/p} h }_{L^{p}(\mi')}^{p}\\
 &= \norm{h }_{L^{p}(\mi')}^{p}.
\end{split}
\]
Since the support of $\mi'\in W(\mi)$ is a set of uniqueness for $\Bc^p_{K+K'}(\Nc)$, this implies that $h=0$, which is absurd, since $\abs{h(0,0)}\Meg \eps \abs{g(0,0)}>0$. 
\end{proof}

\begin{lem}\label{lem:7}
Take $p,q\in (0,\infty)$, with $q\meg p$, two compact  subsets $K,K'$ of $\overline{\Lambda_+}$, and $g\in \Oc_{K'}(\Nc)\cap L^q(\Nc)$. 
For every $\eps>0$ and for every $f\in \Oc_K(\Nc)$, define
\[
B_{\eps,f}^{q,g}\coloneqq \Set{ (\zeta,x)\in \Nc\colon \abs{f(\zeta,x)}\meg \eps  \norm{f L_{(\zeta,x)} g}_{L^q(\Nc)}}.
\] 
Then, there is a constant $C>0$ such that
\[
\norm{\chi_{B_{\eps,f}^{q,g}} f}_{L^p(\Nc)}\meg C\eps \norm{f}_{L^p(\Nc)}
\]
for every $\eps>0$ and for every $f\in \Oc_K(\Nc)\cap L^p(\Nc)$.
\end{lem}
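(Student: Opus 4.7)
The plan is to rewrite the defining inequality for $B_{\eps,f}^{q,g}$ as a pointwise convolution bound on the group $\Nc$, and then to conclude with an application of Young's inequality, exploiting that $\Nc$ is unimodular.

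First I would set $G\coloneqq |g|^q$ and $\check G(y)\coloneqq G(y^{-1})$. Performing the substitution $y=(\zeta,x)^{-1}(\zeta',x')$ in the integral defining $\norm{f\,L_{(\zeta,x)}g}_{L^q(\Nc)}^q$ and using left-invariance of Haar measure, one obtains, for every $(\zeta,x)\in\Nc$,
\[
\norm{f\,L_{(\zeta,x)}g}_{L^q(\Nc)}^q=\int_\Nc \abs{f((\zeta,x)y)}^q\,\abs{g(y)}^q\,\dd y=\bigl(|f|^q*\check G\bigr)(\zeta,x),
\]
where $(u*v)(w)\coloneqq\int_\Nc u(y)\,v(y^{-1}w)\,\dd y$ is the standard group convolution. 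Since $\Nc$ is a $2$-step nilpotent Lie group, it is unimodular; in particular $\norm{\check G}_{L^1(\Nc)}=\norm{G}_{L^1(\Nc)}=\norm{g}_{L^q(\Nc)}^q$. The condition defining $B_{\eps,f}^{q,g}$ then translates into the pointwise bound $|f|^q\meg \eps^q(|f|^q*\check G)$ on $B_{\eps,f}^{q,g}$.

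Second, since $q\meg p$, the exponent $p/q$ satisfies $p/q\Meg 1$, so raising the previous inequality to the power $p/q$ and integrating over $B_{\eps,f}^{q,g}$ yields
\[
\norm{\chi_{B_{\eps,f}^{q,g}}f}_{L^p(\Nc)}^p\meg \eps^p\,\norm{|f|^q*\check G}_{L^{p/q}(\Nc)}^{p/q}.
\]
Finally, Young's inequality on the unimodular group $\Nc$ (valid with the exponent relation $\tfrac{1}{p/q}+\tfrac{1}{1}=1+\tfrac{1}{p/q}$) gives
\[
\norm{|f|^q*\check G}_{L^{p/q}(\Nc)}\meg \norm{|f|^q}_{L^{p/q}(\Nc)}\,\norm{\check G}_{L^1(\Nc)}=\norm{f}_{L^p(\Nc)}^q\,\norm{g}_{L^q(\Nc)}^q,
\]
so that the constant $C\coloneqq \norm{g}_{L^q(\Nc)}$ does the job.

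I do not foresee a substantive obstacle: the argument is essentially formal once the convolution identity in the first step is in place, and the hypothesis $q\meg p$ is precisely what makes it possible both to raise the defining inequality to the power $p/q$ and to apply Young's inequality with the exponents $(p/q,1)\mapsto p/q$.
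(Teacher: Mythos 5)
Your proof is correct and follows essentially the same route as the paper's: both identify $\norm{f\,L_{(\zeta,x)}g}_{L^q(\Nc)}^q$ as the convolution of $\abs{f}^q$ with the reflected kernel $\abs{g}^q(\,\cdot\,^{-1})$ and then invoke Young's inequality to bound this operator on $L^{p/q}(\Nc)$, the hypothesis $q\meg p$ guaranteeing $p/q\Meg 1$. Your write-up merely makes explicit the pointwise bound on $B_{\eps,f}^{q,g}$ and the resulting constant $C=\norm{g}_{L^q(\Nc)}$, which the paper leaves implicit.
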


The proof is based on~\cite[Lemma 2]{Luecking4} (cf.~also~\cite[Lemma 7.13]{CP2}).

\begin{proof}
It will suffice to observe that the operator
\[
T\colon f \mapsto \left[(\zeta,x)\mapsto \int_\Nc f(\zeta',x') \abs{L_{(\zeta,x)} g(\zeta',x')}^q\,\dd(\zeta',x')   \right] = f* \abs{g^*}^q
\]
induces a continuous linear mapping  of $L^{p/q}(\Nc)$ into itself, which is a consequence of Young's inequality. 
\end{proof}

\begin{proof}[Proof of Theorem~\ref{teo:1}.]
 Notice first that, because of Lemma~\ref{lem:2}, it will suffice to prove that $\mi$ is $p$-sampling for $\Bc^p_K(\Nc)$.
 
Set  $K'\coloneqq \overline{\Lambda_+}\cap \overline B_{F'}(0,\eps)$. Fix $g\in \Bc^q_{K'}(E\times F_\C)$ such that $g(0,0)\neq 0$ (cf.~\cite[Proposition 3.4]{Bernstein}).
Observe that Lemma~\ref{lem:6} implies that for every $\eps'>0$ there is a constant $C'_{\eps'}>0$ such that
\[
\norm{f L_{(\zeta,x+i\Phi(\zeta))}g
}_{L^{q}(\mi)}\Meg C'_{\eps'}\norm{f_0 L_{(\zeta,x)}g_0}_{L^q(\Nc)}
\]
for every $(\zeta,x)\in \Nc$ and for every $f\in \Hol_K(E\times F_\C)$  such that
\[
\abs{f(\zeta,x)}>\eps' \norm{f L_{(\zeta,x)} g}_{L^q(\Nc)},
\]
that is, for every $f\in \Hol_K(E\times F_\C)$ and for every $(\zeta,x)\in \Nc\setminus B_{f_0,\eps'}^{q,g_0}$, with the notation of Lemma~\ref{lem:7}.
In addition, by~\cite[Theorem 3.2]{Bernstein}, there is a constant $C''>0$ such that
\[
\norm{h_0}_{L^\infty(\Nc)}\meg C''\norm{h_0}_{L^q(\Nc)}
\]
for every $h\in \Oc_{K+K'}(\Nc)$, so that
\[
\norm{f L_{(\zeta,x+i\Phi(\zeta))} g }_{L^{q}(\mi)}\Meg \frac{C'_{\eps'} \abs{g(0,0)}}{C''} \abs{f(\zeta,x)}
\]
for every $f\in \Hol_K(E\times F_\C)$ and for every $(\zeta,x)\in \Nc\setminus B_{f_0,\eps'}^{q,g_0}$. By Lemma~\ref{lem:7}, we may choose $\eps'$ so small that
\[
\norm{h}_{L^p(\Nc)}\meg 2 \norm{(1-\chi_{ B_{f,\eps'}^{q,g}})h}_{L^p(\Nc)}
\]
for every $h\in \Oc_K(\Nc)\cap L^p(\Nc)$.
Therefore,
\[
\norm{f_0 }_{L^p(\Nc)}\meg \frac{2 C''}{C'_{\eps'} \abs{g(0,0)}} \norm*{(\zeta,x)\mapsto \norm{f L_{(\zeta,x+i\Phi(\zeta))}g }_{L^{q}(\mi)}  }_{L^p(\Nc)}
\]
for every $f\in \Bc_K^p(\Nc)$.  Hence, it will suffice to show that the linear mapping
\[
T\colon f\mapsto \left[(\zeta,x)\mapsto  \int_{E\times F_\C}f \abs{ L_{(\zeta,x+i\Phi(\zeta))} g }^q\,\dd \mi  \right]
\]
maps $L^{p /q}(\mi)$ continuously into $L^{p/q}(\Nc)$. To see this, observe that, by Jensen's inequality,
\[
\abs*{ \int_{E\times F_\C} f \abs{ L_{(\zeta,x+i\Phi(\zeta))} g }^q\,\dd \mi }^{p/q}\meg \norm{L_{(\zeta,x+i\Phi(\zeta))}g}_{L^q(\mi)}^{p-q} \int_{E\times F_\C} \abs{f}^{p/q} \abs{ L_{(\zeta,x+i\Phi(\zeta))} g }^q\,\dd \mi 
\]
for every $(\zeta,x)\in \Nc$ and for every $f\in L^{p/q}(\mi)$. Observe that,  by Proposition~\ref{prop:11bis}, there is a constant $C'''>0$ such that
\[
\norm{h}_{L^q(\mi)}\meg C''' \norm{h_0}_{L^q(\Nc)}
\]
for every $h\in \Bc_{K'}^q(\Nc)$, so that the preceding remarks show that
\[
\begin{split}
 \norm*{ \int_{E\times F_\C} f \abs{ L_{(\zeta,x+i\Phi(\zeta))} g }^q\,\dd \mi }_{L^{p/q}(\Nc)}^{p/q}&\meg C'''^{p-q} \norm{g_0}_{L^q(\Nc)}^{p-q}  \int_{E\times F_\C} \abs{f}^{p/q} \int_\Nc\abs{ L_{(\zeta,x+i\Phi(\zeta))} g }^q\,\dd (\zeta,x)\,\dd \mi \\
 &\meg C'''^{p-q} \norm{g_0}_{L^q(\Nc)}^{p-q}\sup_{h\in \rho(\supp{\mi})} \norm{g_h}_{L^q(\Nc)}^{q} \norm{f}_{L^{p/q}(\mi)}^{p/q} 
\end{split}
\]
for every $f\in L^{p/q}(\mi)$. The proof is complete.
\end{proof}

\begin{cor}\label{cor:2}
 Take $p\in (0,\infty)$, a compact subset $K$ of $\overline{\Lambda_+}$, $\nu\in \cM_+(F)$ with compact support and a bounded Borel measurable function $g$  on $E\times F_\C$. Assume that $\Bc^p_K(\Nc)\neq \Set{0}$ and define a Radon measure $\mi_{\nu,g}$ so that 
 \[
 \int_{E\times F_\C} \varphi \,\dd \mi_{\nu,g} = \int_F \int_\Nc g_h \varphi_h\,\dd \Hc^{2n+m} \,\dd \nu(h)
 \]
 for every $\varphi\in C_c(E\times F_\C)$.
 Then, $\mi_{\nu,g}$ is a (actually,  strongly) $p$-sampling measure for $\Bc^p_K(\Nc)$  if and only if  there are $R,C>0$ such that $[M_{R}(\mi_{\nu,g})]_0\Meg C$ on $ \Nc$.
\end{cor}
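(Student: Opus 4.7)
The plan is to get necessity directly from Proposition~\ref{prop:12}: since a $p$-sampling measure is in particular $p$-Carleson and $\rho(\supp{\mi_{\nu,g}})\subseteq\supp{\nu}$ is bounded, Proposition~\ref{prop:12} yields the existence of $R,C>0$ with $[M_R(\mi_{\nu,g})]_0\Meg C$ on $\Nc$.

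For sufficiency, I would apply Theorem~\ref{teo:1} with $q=p$ and any (say) $\eps>0$. First, since $g$ is bounded and $\nu$ has finite total mass with bounded support, the definition of $\mi_{\nu,g}$ yields the pointwise bound
\[
M_1(\mi_{\nu,g})(\zeta,z)\meg \norm{g}_{L^\infty}\Hc^{2n+m}(\overline B_\Nc((0,0),1))\,\nu(F),
\]
so $M_1(\mi_{\nu,g})\in L^\infty(E\times F_\C)$ and Corollary~\ref{cor:1} gives the $p$-Carleson property.

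The core of the argument is the identification of $W(\mi_{\nu,g})$ and the verification that each of its elements has support of uniqueness for $\Bc^p_{K_\eps}(\Nc)$. A direct computation (using that the $\Nc$-action on $E\times F_\C$ preserves $\rho$ and descends, under the parametrization $\iota_h\colon(\zeta,x)\mapsto(\zeta,x+i\Phi(\zeta)+ih)$ of $\rho^{-1}(h)$, to left-translation on $\Nc$, together with the left-invariance of $\Hc^{2n+m}$) shows that
\[
L_{(\zeta_0,x_0+i\Phi(\zeta_0))}\mi_{\nu,g}=\mi_{\nu,\tilde g},\qquad \tilde g_h=L_{(\zeta_0,x_0)}g_h.
\]
A weak-$\ast$ compactness argument in $L^\infty(F\times\Nc,\nu\otimes \Hc^{2n+m})$ applied to this uniformly bounded family of translates then shows that every $\mi'\in W(\mi_{\nu,g})$ has the form $\mi_{\nu,g^*}$ for some bounded Borel $g^*\Meg 0$. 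The hypothesis $[M_R(\mi_{\nu,g})]_0\Meg C$ is translation-invariant and passes to the vague limit (via $\mi'(K)\Meg\limsup_n\mi_n(K)$ for compact $K$), so $[M_R(\mi_{\nu,g^*})]_0\Meg C$ on $\Nc$. In particular $\mi'\neq 0$, so $g^*_h\not\equiv 0$ on a set of positive $\nu$-measure.

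Finally, suppose $f\in\Bc^p_{K_\eps}(\Nc)$ vanishes on $\supp{\mi'}$. By continuity $\int|f|^p\,\dd\mi_{\nu,g^*}=0$, so by Fubini $f_h=0$ on $\{g^*_h>0\}$ for $\nu$-a.e.~$h$; picking $h_0$ in the intersection of that $\nu$-full set with the positive-$\nu$-measure set where $g^*_h\not\equiv 0$ produces a subset of $\Nc$ of positive $\Hc^{2n+m}$-measure on which $f_{h_0}$ vanishes. But $f_{h_0}=f\circ\iota_{h_0}$ is the pullback of the entire function $f$ by the real-analytic (polynomial) parametrization $\iota_{h_0}$, hence a real-analytic function on the connected manifold $\Nc$; its zero set has positive Lebesgue measure only if $f_{h_0}\equiv 0$. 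Injectivity of $f\mapsto f_{h_0}$ on $\Hol_{K_\eps}(E\times F_\C)$ then forces $f=0$, and Theorem~\ref{teo:1} concludes that $\mi_{\nu,g}$ is strongly $p$-sampling for $\Bc^p_K(\Nc)$. The main technical obstacle is the weak-$\ast$ identification of $W(\mi_{\nu,g})$: one must simultaneously produce the density $g^*$ and verify that weak-$\ast$ convergence of the densities against $L^1(\nu\otimes\Hc^{2n+m})$ does translate to vague convergence of the associated measures, which is a direct Fubini computation after pairing with $\varphi\in C_c(E\times F_\C)$.
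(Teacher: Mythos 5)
Your proposal is correct and follows essentially the same route as the paper: necessity via Proposition~\ref{prop:12}, and sufficiency via Theorem~\ref{teo:1} after checking that every $\mi'\in W(\mi_{\nu,g})$ is nonzero (from the persistence of the lower bound $M_R\Meg C$ under vague limits) and dominated by $\norm{g}_{L^\infty}\cdot\mi_{\nu,1}$, so that its support meets some fibre $\rho^{-1}(h)$ in a set of positive $\Hc^{2n+m}$-measure and is therefore a uniqueness set by real-analyticity. The only cosmetic difference is that you extract an explicit density $g^*$ by weak-$*$ compactness, where the paper merely invokes the domination $\mi'\meg \norm{g}_{L^\infty}\mi_{\nu,1}$, which passes directly to vague limits.
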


Notice that, in particular, this result characterizes the dominant subsets of $\Nc$ for the space $\Oc_K(\Nc)\cap L^p(\Nc)$\footnote{That is, the subsets $D$ of $\Nc$ such that $\chi_D\cdot \Hc^{2n+m}$ is a $p$-sampling measure for $\Oc_K(\Nc)\cap L^p(\Nc)$.} (choosing $\nu=\delta_0$ and $g=1$), and the dominant subsets of $\rho^{-1}(\overline B_F(0,R))$ for the space $\Bc^p_K(\Nc)$\footnote{That is, the subsets $D$ of $\rho^{-1}(\overline B_F(0,R))$ such that $\chi_D\cdot \Hc^{2n+2m}$ is a $p$-sampling measure for $\Bc_K^p(\Nc)$.} (choosing $\nu=\chi_{\overline B_F(0,R)}\cdot \Hc^m$ and $g=1$).

\begin{proof}
 One implication follows from Proposition~\ref{prop:12}, since $\rho(\supp{\mi_{\nu,g}}) \subseteq \supp{\nu}$ and 
 \[
 M_{1}(\mi_{\nu,g})\meg \norm{g}_{L^\infty(E\times F_\C)} \nu(F) \Hc^{2n+m}(B_\Nc((0,0),1)).
 \]
 Conversely, assume that there are $R,C>0$ such that $[M_{R}(\mi_{\nu,g})]_0\Meg C$ on $\Nc$. Take $\mi'\in W(\mi_{\nu,g})$, and observe that $\mi'\meg \norm{g}_{L^\infty(E\times F_\C)}\mi_{\nu,1}$, so that $\mi'$ is absolutely continuous with respect to $\mi_{\nu,1}$. In particular, either $\mi'=0$ or $\mi_{\nu,1}(\supp{\mi'})>0$, in which case $\Hc^{2n+m}(\rho^{-1}(h)\cap \supp{\mi'})>0$ for some $h\in F$, so that $\supp{\mi'}$ is a set of uniqueness for $\Hol(E\times F_\C)$. 
 By Theorem~\ref{teo:1}, it will then suffice to prove that $\mi'\neq 0$. Nonetheless, if $\varphi\in C_c(E\times F_\C)$ and $\varphi \Meg\chi_{B((0,0),R)}$, then 
 \[
 \int_{E\times F_\C} \varphi\,\dd L_{(\zeta,x)}\mi_{\nu,g}\Meg M_{R}(\mi_{\nu,g})(\zeta,x)\Meg C
 \]
 for every $(\zeta,x)\in \Nc$, so that $\int_{E\times F_\C} \varphi\,\dd \mi'\Meg C$. The proof is complete.
\end{proof}

\begin{cor}\label{cor:3}
 Take $p\in (0,\infty)$, $\eps,C>0$, and a compact subset $K$ of $\overline{\Lambda_+}$. In addition, fix $\nu\in \cM_+(E\times F_\C)$.
 For every $\mi\in \cM_+(E\times F_\C)$ and for every $R>0$, define
 \[
 G_{\mi,R}\coloneqq \Set{(\zeta,z)\in E\times F_\C\colon M_R(\mi)(\zeta,z)\Meg \eps}.
 \]
 Then, there is $R>0$ such that every $R'>0$ and for every $\mi\in \cM_+(\Nc)$ such that
 \[
 M_{1}(\mi)\in L^\infty(\Nc)
 \]
 and
 \[
 \nu(G_{\mi,R}\cap B((\zeta,x+i\Phi(\zeta)),R))\Meg C,
 \]
 for every $(\zeta,x)\in \Nc\setminus B((0,0),R')$,  is a strongly $p$-sampling measure for $\Bc^p_K(\Nc)$.
\end{cor}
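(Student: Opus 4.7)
The plan is to verify the hypotheses of Theorem~\ref{teo:1} for $\mi$ with appropriate $q\in(0,p]$ and $\eps'>0$, choosing $R$ sufficiently small relative to $K$ and $\eps'$. Since $\mi\in\cM_+(\Nc)$ is supported on $\rho^{-1}(0)$, one has $\rho(\supp\mi)=\Set{0}$ bounded, so the hypothesis $M_1(\mi)\in L^\infty(\Nc)$ combined with Corollary~\ref{cor:1} immediately shows that $\mi$ is a $p$-Carleson measure for $\Bc^p_K(\Nc)$. Hence the bulk of the work is to verify the set-of-uniqueness condition on $W(\mi)$.

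I would first extract a pointwise density bound. The assumption $\nu(G_{\mi,R}\cap B((\zeta,x+i\Phi(\zeta)),R))\Meg C>0$ forces the intersection to be non-empty, so some $(\zeta_*,z_*)\in G_{\mi,R}$ lies within distance $R$ of $(\zeta,x+i\Phi(\zeta))$; the triangle inequality for $d$ yields $\overline B((\zeta_*,z_*),R)\subseteq \overline B((\zeta,x+i\Phi(\zeta)),2R)$, and by definition of $G_{\mi,R}$ we get
\[
[M_{2R}(\mi)]_0(\zeta,x)\Meg M_R(\mi)(\zeta_*,z_*)\Meg \eps\qquad \text{for every }(\zeta,x)\in \Nc\setminus B_\Nc((0,0),R').
\]

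Given $\mi'\in W(\mi)$ as the vague limit of $L_{g_\alpha}\mi$ with $g_\alpha=(\zeta_\alpha,x_\alpha+i\Phi(\zeta_\alpha))$, the left-invariance of $d$ gives $[M_{2R}(L_{g_\alpha}\mi)]_0(\zeta,x)=[M_{2R}(\mi)]_0(g_\alpha^{-1}\cdot(\zeta,x))\Meg \eps$ whenever $g_\alpha^{-1}\cdot(\zeta,x)\notin B_\Nc((0,0),R')$. Since vague convergence of positive Radon measures gives $\mi'(\overline B(p,2R))\Meg \limsup_\alpha (L_{g_\alpha}\mi)(\overline B(p,2R))$ for closed balls, we deduce that either $g_\alpha\to\infty$ and $[M_{2R}(\mi')]_0\Meg \eps$ on all of $\Nc$, or a subnet of $g_\alpha$ converges to some $g^*\in\Nc$ and $\mi'=L_{g^*}\mi$ inherits the lower bound outside a translated compact subset of $\Nc$.

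The crux is to turn this density into uniqueness: $\supp\mi'$ must be a set of uniqueness for $\Bc^q_{K_{\eps'}}(\Nc)$. Choosing $R>0$ small enough that $2R$ lies below the critical spacing associated with $K_{\eps'}$, a covering argument combined with the pointwise control $\norm{h_0}_{L^\infty(\Nc)}\meg C''\norm{h_0}_{L^q(\Nc)}$ for $h\in\Oc_{K_{\eps'}}(\Nc)$ (cf.~\cite[Theorem~3.2]{Bernstein}) should force any $h\in \Bc^q_{K_{\eps'}}(\Nc)$ vanishing $\mi'$-a.e.\ to vanish identically. The main obstacle is precisely this quantitative translation of density into uniqueness, and in particular handling the bounded-$g_\alpha$ case where the lower bound holds only outside a translated compact set; should the direct covering argument prove unwieldy, the backup is to augment $\mi$ with a finitely-supported (hence sparse) lattice measure $\mi_1$ concentrated on $B_\Nc((0,0),R'+1)$, so that $\mi+\mi_1$ satisfies the global lower bound $[M_{R_1}(\mi+\mi_1)]_0\Meg \min(\eps,1)$ on $\Nc$, verify Theorem~\ref{teo:1} for $\mi+\mi_1$ in this cleaner setting, and conclude via Proposition~\ref{prop:4} that $\mi$ is strongly $p$-sampling for $\Bc^p_K(\Nc)$.
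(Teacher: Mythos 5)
Your skeleton matches the paper's proof: establish the Carleson property from $M_1(\mi)\in L^\infty$ via Corollary~\ref{cor:1}; deduce from the hypothesis $\nu(G_{\mi,R}\cap B((\zeta,x+i\Phi(\zeta)),R))\Meg C>0$ that $[M_{2R}(\mi)]_0\Meg\eps$ off a compact set; transfer this lower bound to every $\mi'\in W(\mi)$ so that $\supp{\mi'}$ meets every ball of radius $2R$ centred on $\cM$; handle $R'>0$ by adding a compactly supported (hence sparse) measure and invoking Proposition~\ref{prop:4}; and conclude with Theorem~\ref{teo:1}. The vague-limit transfer is done in the paper with a smoothed maximal function $M'_R$ (sandwiched between $M_R$ and $M_{2R}$) that converges locally uniformly, rather than with your portmanteau inequality for compact sets, but both devices work.

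The genuine gap is the step you yourself call ``the main obstacle'': converting $2R$-density of $\supp{\mi'}$ into the statement that $\supp{\mi'}$ is a set of uniqueness for $\Bc^q_{K_{\eps'}}(\Nc)$. The tool you cite, the embedding $\norm{h_0}_{L^\infty(\Nc)}\meg C''\norm{h_0}_{L^q(\Nc)}$ of \cite[Theorem 3.2]{Bernstein}, cannot do this: it bounds a sup norm by an integral norm but gives no control on the oscillation of $h$ between consecutive points of $\supp{\mi'}$, so it does not force $h\equiv 0$ from the vanishing of $h$ on a $2R$-net. What is needed is a quantitative ``fine nets are uniqueness sets'' statement, and the choice of $R$ must be made from it \emph{before} $\mi$ is given. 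The paper does exactly this: using Proposition~\ref{prop:11bis} and \cite[Theorem 1.16]{Bernstein} it fixes $R>0$ so that evaluation on any $(R,6)$-lattice in $\rho^{-1}(B_F(0,2R))$ is an isomorphism of $\Bc^p_{K+\overline B_{F'}(0,1)\cap\overline{\Lambda_+}}(\Nc)$ onto a closed subspace of $\ell^p(J)$; a maximal $2R$-separated family in $\supp{\mi'}$ is then such a lattice, and uniqueness follows. (A Bernstein-type gradient inequality $\norm{Xh}_{L^\infty}\meg C_K\norm{h}_{L^\infty}$ combined with the mean value theorem would be an acceptable elementary substitute, but it too must be stated and proved; the $L^\infty$--$L^q$ embedding alone is not it.) Until this ingredient is supplied, the argument is incomplete; the remaining pieces of your plan, including the augmentation trick for the bounded-translate/$R'>0$ cases, are sound and coincide with the paper's.
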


This result in inspired by~\cite[Theorem 4.2]{Luecking3} (cf.~also~\cite[Theorem 7.6]{CP2}). The proof is different, though. Cf.~also~\cite[Theorem 5]{Lin}.

\begin{proof}
 We first prove the assertion for $R'=0$.
 
 For every $R>0$, fix $\varphi_R \in C_c(\Nc)$ such that $\chi_{B((0,0),R)}\meg  \varphi\meg \chi_{B((0,0),2R)}$.
 Define, for every $\mi\in \cM_+(E\times F_\C)$ and for every $R>0$,
 \[
 M'_{R}(\mi)\colon E\times F_\C\ni (\zeta,z)\mapsto \int_{E\times F_\C} L_{(\zeta,z)} \varphi_R\,\dd \mi
 \]
 and
 \[
 G'_{\mi,R}\coloneqq \Set{(\zeta,z)\in E\times F_\C\colon M'_R(\mi)(\zeta,z)\Meg \eps},
 \]
 so that
 \[
 M_R(\mi)\meg M'_R(\mi)\meg M_{2 R}(\mi) \qquad \text{and} \qquad
 G_{\mi,R}\subseteq G'_{\mi,R}\subseteq G_{\mi,2 R}.
 \]
 
 By Proposition~\ref{prop:11bis} and~\cite[Theorem 1.16]{Bernstein}, we may find $R>0$ such that, for every $(R,6)$-lattice $(\zeta_j,z_j)_{j\in J}$ on $\rho^{-1}(B_F(0,2R))$, the mapping
 \[
 f \mapsto (f(\zeta_j,z_j))_j
 \]
 induces an isomorphism of $\Bc_{K+\overline B_{F'}(0,1)\cap \overline{\Lambda_+}}^p(\Nc)$ onto a closed subspace of $\ell^p(J)$.\footnote{Let $(\zeta'_k,x'_k)_{k\in K}$ be a $(R,2)$-lattice on $\Nc$. Then, we may define $\iota\colon J\to K$ so that $d((\zeta_j,z_j), (\zeta'_{\iota(j)},x'_{\iota(j)}+i\Phi(\zeta'_{\iota(j)})))<4R$ for every $j\in J$. Clearly, there is a dimensional constant $N\in \N$ such that the fibres of $\iota$ have at most $N$ elements. Therefore, using~\cite[Lemma 3.25]{CalziPeloso} and Proposition~\ref{prop:11bis}, we see that there is a dimensional constant $C>0$ such that $\norm{(f(\zeta_j,z_j))_j}_{\ell^p(J)}\meg C  \norm{(f_0(\zeta'_k,x'_k))_k}_{\ell^p(K)}+CR\norm{f_0}_{L^p(\Nc)}$ for every $f\in \Bc^p_{K+\overline B_{F'}(0,1)\cap \overline{\Lambda_+}}$, provided that (say) $R\meg 1$. The assertion then follows from Proposition~\ref{prop:11bis} and~\cite[Theorem 1.16]{Bernstein}.  }
 
 Then, take $\mi\in \cM_+(E\times F_\C)$ so that $M_{1}(\mi)\in L^\infty(E\times F_\C)$, and assume that 
 \[
 C'\meg \nu(G_{\mi,R}\cap B((\zeta,x+i\Phi(\zeta)),R))\meg \nu(G'_{\mi,R}\cap B((\zeta,x+i\Phi(\zeta)),R))
 \]
 for every $(\zeta,x)\in \Nc$. Observe that, since $M_{1}(\mi)\in L^\infty(\Nc)$, the set of $L_{(\zeta,x+i\Phi(\zeta))}\mi$, as $(\zeta,x)$ runs through $\Nc$, is bounded in $\cM_+(E\times F_\C)$, hence relatively compact and metrizable. In particular, if $\mi'\in W(\mi)$, then there is a sequence $(\zeta_j,x_j)_j$ of elements of $\Nc$ such that $L_{(\zeta_j,x_j+i\Phi(\zeta_j))}\mi$ converges vaguely to $\mi'$. Hence,  $M'_R(L_{(\zeta_j,x_j+i\Phi(\zeta_j))}\mi)$ converges locally uniformly to $M'_R(\mi')$, so that
 \[
 \bigcap_{k\in \N} \bigcup_{j\Meg k} G'_{L_{(\zeta_j,x_j+i\Phi(\zeta_j))}\mi,R}\subseteq G'_{\mi',R},
 \]
 whence
 \[
 \nu(G'_{\mi',R}\cap B((\zeta,x+i\Phi(\zeta)),R))\Meg \lim_{k\to \infty} \nu\bigg( \bigcup_{j\Meg k}G'_{L_{(\zeta_j,x_j)}\mi,R}\cap B((\zeta,x+i\Phi(\zeta)),R)\bigg)\Meg C'
 \]
 for every $(\zeta,x)\in \Nc$. Therefore, $\supp{\mi'}\cap B((\zeta,x+i\Phi(\zeta)),2R)\neq \emptyset$  for every $(\zeta,x)\in \Nc$, so that we may find a $(R,6)$-lattice on $\rho^{-1}(B_F(0,2R))$ whose elements belong to $\supp{\mi'}$.\footnote{It suffices to take a family $(\zeta_j,x_j)_j$ of elements of $\supp{\mi'}$ which is maximal for the relation $d((\zeta_j,x_j),(\zeta_k,x_k))\Meg 2R$.} Our choice of $R$ then implies that every element of $\Bc^p_{K+\overline B_{F'}(0,1)\cap\overline{\Lambda_+}}(\Nc)$ which vanishes on $\supp{\mi'}$ must vanish identically. Therefore, Theorem~\ref{teo:1} implies that $\mi$ is a strongly $p$-sampling measure for $\Bc_K^p(\Nc)$. The assertion for $R'>0$ follows from Proposition~\ref{prop:4}, applying the preceding arguments to $\mi+\chi_{B((0,0),2R)}\cdot \Hc^{2n+2m}$.
\end{proof}

\begin{cor}\label{cor:4}
 Take $p\in (0,\infty)$, $r<\frac \pi 2$, $\lambda\in F'$, and a compact  subset $K$ of $\overline{\Lambda_+}$ with a non-empty interior. Fix a basis $(e_k)$ of $E$ with dual basis $(e'_k)$, and assume that $H_K(-\Phi(\zeta))<\frac\pi 2 \sum_{k} \abs{\langle e'_k,\zeta\rangle}^2$ for every $\zeta\in E\setminus \Set{0}$. Define $J\coloneqq \Z[i]^{n}$ and $\zeta_j\coloneqq \sum_k j_k e_k$ for every $j\in J$. For every $j\in J$, take a sequence $(x_{j,j'})_{j'\in \N}$ of elements of $F$ which is a finite union of separated families,\footnote{We say that a family $(x'_{j''})$ of elements of $F$ is uniformly separated if $\inf_{j''_1\neq j''_2} \abs{x'_{j''_1}-x'_{j''_2}}>0$. This condition is therefore equivalent to saying that $\sum_{j'}\delta_{x_{j,j'}}$ is a $p$-Carleson measure for $\Bc^p_K(F)$, thanks to Corollary~\ref{cor:1}.} and assume that $\sum_{j'\in \N} (x_{j,j'}+r [(K-\lambda)\cup (\lambda-K)]^\circ)=F$.
 Then, $(\zeta_j,x_{j,j'}+i\Phi(\zeta_j))_{j\in J,j'\in \N}$ is a strongly sampling family for $\Bc^p_K(\Nc)$.
\end{cor}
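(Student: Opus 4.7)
The plan is to verify the hypotheses of Theorem~\ref{teo:1} with $q=p$ and a sufficiently small $\eps>0$, applied to the measure
\[
\mu\coloneqq\sum_{j\in J,\,j'\in\N}\delta_{(\zeta_j,\,x_{j,j'}+i\Phi(\zeta_j))}.
\]
I must check that (A) $\mu$ is $p$-Carleson for $\Bc^p_K(\Nc)$, and (B) the support of every $\mu'\in W(\mu)$ is a set of uniqueness for $\Bc^p_{K_\eps}(\Nc)$.

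For (A), $\rho(\supp{\mu})=\Set{0}$, so Corollary~\ref{cor:1} reduces the claim to $M_1(\mu)\in L^\infty(E\times F_\C)$. This is immediate from the uniform separation of $(\zeta_j)_{j\in J}$ in $E$ combined with the assumption that each $(x_{j,j'})_{j'}$ is a finite union of separated families in $F$ (the uniformity across $j$ is implicit in the footnote on the Carleson characterisation of such families).

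For (B), fix $\mu'\in W(\mu)$, a vague limit of $L_{(\zeta_0^{(\alpha)},\,x_0^{(\alpha)}+i\Phi(\zeta_0^{(\alpha)}))}\mu$ along a subnet. The group law locates these translated measures at the points
\[
(\zeta_0^{(\alpha)}+\zeta_j,\;x_0^{(\alpha)}+x_{j,j'}+2\Im\Phi(\zeta_0^{(\alpha)},\zeta_j)+i\Phi(\zeta_0^{(\alpha)}+\zeta_j)).
\]
After a further subnet, I write $\zeta_0^{(\alpha)}=\tilde\zeta^{(\alpha)}+\zeta_{k^{(\alpha)}}$ with $\tilde\zeta^{(\alpha)}$ in a fundamental domain of the lattice $\Set{\zeta_j\colon j\in J}$ and converging to some $\tilde\zeta\in E$; after reindexing $j\mapsto j-k^{(\alpha)}$ the $\zeta$-coordinates become $\tilde\zeta^{(\alpha)}+\zeta_j\to\tilde\zeta+\zeta_j$. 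The compactness of balls in $F$ and the separation of the shifted $x$-family force, for each $j\in J$, the $x$-coordinates to cluster at some $(y_{j,j'})_{j'}\subseteq F$, which is still a finite union of separated families and still covers $F$ by translates of $r[(K-\lambda)\cup(\lambda-K)]^\circ$ (any $y\in F$ lies, for each $\alpha$, in such a translate of an $x$-coordinate, and since separation keeps the relevant preimages in a bounded set, a subnetted limit survives).

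Now let $f\in\Bc^p_{K_\eps}(\Nc)$ vanish on $\supp{\mu'}$. For each $j\in J$ the fiber restriction
\[
g_j(x)\coloneqq f(\tilde\zeta+\zeta_j,\,x+i\Phi(\tilde\zeta+\zeta_j))
\]
lies in the classical abelian Bernstein space $\Bc^\infty_{K_\eps}(F)$ (boundedness on $F$ follows from~\cite[Theorem 3.2]{Bernstein}) and vanishes on $(y_{j,j'})_{j'}$. For $\eps$ small enough the polar $[(K_\eps-\lambda)\cup(\lambda-K_\eps)]^\circ$ is arbitrarily close to $[(K-\lambda)\cup(\lambda-K)]^\circ$, so the covering persists with a radius $r''<\pi/2$ for the perturbed polar. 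The classical Beurling--Plancherel--P\'olya uniqueness theorem on $F$ (see e.g.~\cite{PlancherelPolya,Landau,Beurling}) then forces $g_j\equiv0$ on $F$, hence on $F_\C$ by holomorphic extension across the totally real slice.

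It remains to deduce $f\equiv0$ from $f(\tilde\zeta+\zeta_j,\,\cdot\,)\equiv0$ on $F_\C$ for every $j\in J$. Fix $z_0\in F_\C$ and set $h(\zeta)\coloneqq f(\tilde\zeta+\zeta,\,z_0)$; then $h$ is entire on $E$ and vanishes on the lattice $\Set{\zeta_j\colon j\in J}$, which via $(e_k)$ is the image of $\Z[i]^n$. From $f\in\Hol_{K_\eps}(E\times F_\C)$, the Lipschitz continuity of $H_{K_\eps}$, and the inequality $H_{K_\eps}\meg H_K+\eps\abs{\,\cdot\,}$ applied to $\Phi(\tilde\zeta+\zeta)=\Phi(\zeta)+2\Re\Phi(\zeta,\tilde\zeta)+\Phi(\tilde\zeta)$, one obtains
\[
\abs{h(\zeta)}\meg C(1+\abs{\zeta})^N\ee^{H_K(-\Phi(\zeta))+O(\eps\abs{\zeta}^2+\abs{\zeta})}.
\]
Compactness of the unit sphere in $E$ together with the strict hypothesis yields $\delta>0$ with $H_K(-\Phi(\zeta))\meg(\tfrac\pi 2-\delta)\sum_k\abs{\langle e'_k,\zeta\rangle}^2$; taking $\eps$ small enough to absorb the error, $\abs{h(\zeta)}\meg C'\ee^{(\pi/2-\delta/2)\sum_k\abs{\langle e'_k,\zeta\rangle}^2+O(\abs{\zeta})}$. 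The classical Bargmann--Fock uniqueness theorem in $n$ variables, built from the product $\prod_k\sigma(\langle e'_k,\zeta\rangle)$ of Weierstrass $\sigma$-functions (critical rate $\pi/2$, simple zeros on $\Z[i]^n$), then forces $h\equiv0$. As $z_0$ and $\tilde\zeta$ were arbitrary, $f\equiv0$, completing (B).

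\textbf{Main obstacle.} The most delicate step is the detailed description of $\supp{\mu'}$ in~(B): one must track the \emph{twisted} translation induced by the non-abelian group law (the $2\Im\Phi(\zeta_0^{(\alpha)},\zeta_j)$ correction in the $x$-coordinate) and verify that both the finite-union-of-separated structure and the covering by translates of $r[(K-\lambda)\cup(\lambda-K)]^\circ$ persist in the vague limit. Simultaneously one must tune $\eps$ so that a single small value works both in the fiberwise abelian Plancherel--P\'olya sampling step on $F$ and in the final Fock-type uniqueness threshold on $E$.
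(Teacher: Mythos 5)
Your proof is correct and structurally identical to the paper's: both run the argument through Theorem~\ref{teo:1}, analyse the weak-* limits in $W(\mu)$ (checking that the twisted translation only shifts each fibre family $(x_{j,j'})_{j'}$ by a constant, so separation and the covering by translates of $r[(K-\lambda)\cup(\lambda-K)]^\circ$ survive in the limit), and then prove uniqueness fibrewise in $F$ followed by a Fock-type uniqueness argument in the CR directions. The one genuine difference is that the paper deduces Corollary~\ref{cor:4} from the more general Corollary~\ref{cor:6}, whose $\zeta$-direction step invokes the Ortega-Cerd\`a--Seip density theorem \cite[Theorem 1]{OS} for weighted Fock spaces with subharmonic weights; you instead exploit the fact that here the zero set is exactly $\Z[i]^n$ in the coordinates $\langle e'_k,\zeta\rangle$ and the weight is quadratic with rate strictly below $\pi/2$, so the classical Weierstrass $\sigma$-function comparison suffices. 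That is more elementary and self-contained for this special case, at the cost of not yielding the general statement of Corollary~\ref{cor:6}. Two small points of precision: (i) in $n>1$ variables you cannot literally divide by $\prod_k\sigma(\langle e'_k,\zeta\rangle)$, since $h$ vanishes only on the codimension-$n$ set $\Z[i]^n$ while the product vanishes on a divisor; you need the one-variable-at-a-time iteration (this is exactly the ``simple iteration argument'' the paper points to via \cite[Corollary 5.4]{MonguzziPelosoSalvatori}); (ii) for general $m$ and general compact convex $K$ the fibrewise uniqueness statement you invoke is \cite[Theorem 4.1]{OlevskiiUlanovskii} rather than the one-dimensional/parallelotope results of Plancherel--P\'olya, Beurling or Landau. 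Neither point affects the validity of the argument.
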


Observe that this result implies~\cite[Theorem
4.1]{OlevskiiUlanovskii} (of which is a consequence and to which it
essentially reduces when $n=0$) by means of~\cite[Theorem
2.1]{OlevskiiUlanovskii} (or of the following
Proposition~\ref{prop:2}). Notice that~\cite[Theorem
1.6]{MonguzziPelosoSalvatori} gives a slightly stronger result,
corresponding to the case in which $m=1$, $n\Meg 1$, $p=2$, $r=\frac
\pi 2$, $K=[0,a]$, $\lambda=\frac a 2$, $(e_k)$ is orthogonal with
respect to $\Phi$, and $\Set{x_{j,j'}\colon j'\in J'}=\frac{2 }{\pi
  a}\Z$. Notice that this latter result only gives rise to sampling
families (and not strongly sampling families). Actually, the above
families cannot be (in general) strongly sampling if $r=\frac \pi 2 $,
at least when $n=0$ and  $K$ is a parallelotope (cf.~\cite[Nos.\ 34,
44]{PlancherelPolya}). 

Notice that, by means of Proposition~\ref{prop:4}, we may drop a finite number of terms from the above families without compromising the strong sampling property.

Corollary~\ref{cor:4} will be a consequence of the more general Corollary~\ref{cor:6}.

\begin{cor}\label{cor:6}.
 Take $p\in (0,\infty)$, $R>0$, $r<\frac{\pi}{2}$, $\lambda\in F'$, a compact subset $K$ of $\overline{\Lambda_+}$ with a non-empty interior, and a basis $(e_k)$ of $E$ with dual basis $(e'_k)$. Assume that:
 \begin{itemize}
  \item there are subharmonic functions $(\varphi_k)$ on $\C$, with $m\meg \Delta \varphi_k\meg M$ for some $m,M>0$, such that $H_K(-\Phi(\zeta))\meg \sum_k  \varphi_k(\langle e'_k,\zeta\rangle)$ for every $\zeta\in $;
  
  \item for every $k$ there is a family $(\zeta_{k,j})_{j\in \N}$ of elements of $\C$ which is a finite union of uniformly separated sequences, and which contains a uniformly separated sequence $(\zeta'_{k,j})_{j\in \N}$ such that\footnote{By~\cite[Theorem 1]{OS}, this is equivalent to saying that $(\zeta_{k,j})_{j\in \N}$ is a sampling sequence for the Fock space $\Set{f\in \Hol(\C)\colon  \int_\C \abs{f(w)}^2 \ee^{-\varphi(w)}\,\dd w <\infty}$.}
  \[
  \liminf_{R'\to \infty} \inf_{w\in \C}\frac{\card\Big(B_\C(w,R')\cap \Set{\zeta'_{k,j}\colon j\in \N}\Big)}{\int_{B_\C(w,R')} \Delta \varphi_k(w')\,\dd w'}>\frac 2 \pi;
  \]
  
  \item for every $j\in \N^n$ there is a sequence $(x_{j,j'})_{j'\in \N}$ of elements of $F$ which is a finite union of uniformly separated sequences such that
  \[
  \sum_{j'\in \N}  (x_{j,j'}+r[(K-\lambda)\cup(\lambda-K)]^\circ)\supseteq F\setminus B(x_j,R)
  \]
  for some $x_j\in F$.
 \end{itemize}
 Define $\zeta_j\coloneqq \sum_k \zeta_{k,j_k}e_k$ for every  $j\in \N^n$. Then, $(\zeta_j,x_{j,j'}+i\Phi(\zeta_j))_{j\in \N^n,j'\in \N}$ is a strongly sampling family for $\Bc^p_K(\Nc)$.
\end{cor}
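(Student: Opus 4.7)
The plan is to apply Theorem~\ref{teo:1} to the counting measure
\[
\mi\coloneqq \sum_{j\in\N^n,\,j'\in\N}\delta_{(\zeta_j,\,x_{j,j'}+i\Phi(\zeta_j))},
\]
with $q\coloneqq \min(p,2)$ and a small $\eps>0$ to be fixed later. First I verify that $\mi$ is a $p$-Carleson measure for $\Bc^p_K(\Nc)$: since $\rho(\Supp{\mi})=\Set{0}$, by Corollary~\ref{cor:1} it suffices to check $M_1(\mi)\in L^\infty$. The hypothesis that each $(\zeta_{k,j})_j$ is a finite union of uniformly separated sequences in $\C$ entails, by $n$-fold product in the basis $(e_k)$, that $(\zeta_j)_{j\in\N^n}$ is a finite union of uniformly separated sequences in $E$; combined with the analogous property of each $(x_{j,j'})_{j'}$ in $F$, this makes the full family a finite union of uniformly separated sequences in $\Nc$, whence $M_1(\mi)\in L^\infty(E\times F_\C)$.

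The core step is to show that, for $\eps$ sufficiently small, the support of every $\mi'\in W(\mi)$ is a set of uniqueness for $\Bc^q_{K_\eps}(\Nc)$. Extract a subsequence along which $L_{(\eta_k,y_k+i\Phi(\eta_k))}\mi\to\mi'$ vaguely and the translated atoms stabilize on every compact set. The uniform separation obtained above forces $\Supp{\mi'}$ to be again a family of the form $(\zeta^*_j,x^*_{j,j'}+i\Phi(\zeta^*_j))_{j\in J^*,j'}$; by the Beurling-type density characterization of sampling sequences for Fock spaces due to Ortega-Cerd\`a--Seip~\cite{OS}, the strict density $>2/\pi$ of each $(\zeta_{k,j})$ passes to the limit along each coordinate, and the covering property $\sum_{j'}(x_{j,j'}+r[(K-\lambda)\cup(\lambda-K)]^\circ)\supseteq F\setminus B(x_j,R)$ is likewise preserved (limits of translates of a fixed bounded set that cover $F\setminus B(x_j,R)$ still cover $F$ up to a bounded set).

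Now let $f\in\Bc^q_{K_\eps}(\Nc)$ vanish on $\Supp{\mi'}$, with holomorphic extension $F\in\Hol(E\times F_\C)$. For each fixed $\zeta^*_j$, the slice $z\mapsto F(\zeta^*_j,z)$ is a Bernstein function in $z$ with spectrum in $K_\eps$, and its restriction to $\{x+i\Phi(\zeta^*_j):x\in F\}$ vanishes on $(x^*_{j,j'})_{j'}$, whose $r[(K-\lambda)\cup(\lambda-K)]^\circ$-translates cover $F$ up to a bounded set; a Plancherel-P\'olya / Olevskii--Ulanovskii type uniqueness theorem for Bernstein functions on $F$ (cf.~\cite{OlevskiiUlanovskii}), available because $r<\pi/2$ furnishes strict slack, gives $F(\zeta^*_j,\cdot)\equiv 0$. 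Fixing then $z\in F_\C$, the entire function $\zeta\mapsto F(\zeta,z)$ on $E$ vanishes at every $\zeta^*_j$ and satisfies
\[
\abs{F(\zeta,z)}\lesssim \ee^{H_{K_\eps}(-\Phi(\zeta))}\meg \ee^{\sum_k\varphi_k(\langle e'_k,\zeta\rangle)+C\eps\abs{\zeta}^2}.
\]
Iterating one coordinate at a time the one-variable Fock sampling theorem of~\cite{OS}, applicable because each coordinate density strictly exceeds $2/\pi$ with slack absorbing the $C\eps\abs{\zeta}^2$ perturbation for $\eps$ small, yields $F(\cdot,z)\equiv 0$, hence $F\equiv 0$, and Theorem~\ref{teo:1} applies.

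The main obstacle is the \emph{coordinated} passage to vague limits in both the $E$- and $F$-directions, together with the $n$-variable iteration of one-dimensional Fock sampling against the cross-terms in the weight $\zeta\mapsto H_K(-\Phi(\zeta))$: the product weight $\sum_k\varphi_k(\langle e'_k,\zeta\rangle)$ decouples exactly along the basis $(e_k)$, and the strict inequalities $>2/\pi$ (density) and $r<\pi/2$ (covering) supply precisely the slack needed to absorb the $\eps$-thickening of the spectrum and the defect that may occur in the limit. Any finite residual defect at a bounded number of indices $j$ is absorbed via Proposition~\ref{prop:4} through a suitable sparse correction supported near the isolated holes.
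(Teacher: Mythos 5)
Your proposal is correct and follows essentially the same route as the paper's proof: establish the Carleson bound from uniform separation, identify the supports of vague limits of translates as families of the same structure (with slightly degraded parameters absorbed by the strict inequalities $r<\pi/2$ and density $>2/\pi$), prove uniqueness by slicing first in the $F$-direction via Olevskii--Ulanovskii and then in the $E$-direction via iterated one-variable Fock sampling from Ortega-Cerd\`a--Seip, and conclude with Theorem~\ref{teo:1} (with Proposition~\ref{prop:4} absorbing the bounded exceptional set $B(x_j,R)$). The only cosmetic difference is that the paper runs the uniqueness argument at the $\Bc^\infty$ level and deduces it for $\Bc^p$, while you work with $q=\min(p,2)$; this changes nothing of substance.
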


In the above statement, $\Delta=\partial_w \overline{\partial_w}$, so that it equals $1/4$ the ordinary Laplacian, and $\Delta(\abs{\,\cdot\,}^2)=1$.

For the sake of simplicity, we shall prove separately the following elementary lemma.

\begin{lem}\label{lem:4}
 Let $X$ be a locally compact space with a countable base, and denote by $\cM_d(X)$ the space of discrete Radon measures $\mi$ on $X$ such that $\mi(\Set{x})\in \Z$ for every $x\in X$. Then, $\cM_d(X)$ is vaguely closed. 
 In addition, if $(\mi_j)$ is a sequence of positive elements of $\cM_d(X)$ which converges vaguely to some $\mi$, then 
 \[
 \supp{\mi}=\Set{x\in X\colon \exists (x_j)\in \prod_j\supp{\mi_j}\:\: \lim_{j\to \infty} x_j= x  }.
 \]
\end{lem}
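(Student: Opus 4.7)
Both assertions hinge on the portmanteau property for vague convergence of positive Radon measures: if $\mi_j\to \mi$ vaguely, then $\mi_j(A)\to \mi(A)$ for every Borel set $A$ with compact closure such that $\mi(\partial A)=0$. Since $X$ is locally compact with a countable base, every $x\in X$ admits arbitrarily small open neighbourhoods with compact closure and $\mi$-null boundary (only countably many radii in a basis of balls around $x$ can charge $\mi$ on their boundary).

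For the first assertion, let $(\mi_j)\subseteq \cM_d(X)$ converge vaguely to $\mi$. Pick $x\in X$ and such a $V\ni x$. Portmanteau yields $\mi_j(V)\to \mi(V)$; since each $\mi_j(V)\in \Z$, the limit $N\coloneqq \mi(V)$ is a non-negative integer and $\mi_j(V)=N$ for all large $j$. For such $j$ I list
\[
\mi_j|_V=\sum_{k=1}^N \delta_{y_{j,k}}, \qquad y_{j,k}\in V,
\]
with multiplicities absorbed into the enumeration. By compactness of $\overline V$ I pass to a subsequence so that $y_{j,k}\to y_k\in \overline V$ for every $k$. Testing against an arbitrary $\varphi\in C_c(V)$, whose support is at positive distance from $\partial V$ so that $\varphi(y_k)=0$ whenever $y_k\in \partial V$, vague convergence gives
\[
\int \varphi\,\dd \mi=\lim_j \sum_{k=1}^N \varphi(y_{j,k})=\sum_{k\colon y_k\in V}\varphi(y_k),
\]
whence $\mi|_V=\sum_{k\colon y_k\in V}\delta_{y_k}$ is an integer-mass discrete measure. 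Covering $X$ by such $V$ yields $\mi\in \cM_d(X)$; the identification is independent of the chosen subsequence because the limit $\mi$ is already fixed.

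For the support identification, the inclusion $\supseteq$ is a one-line portmanteau argument: if $x_j\in \supp{\mi_j}$ converges to $x\notin \supp{\mi}$, pick an open $V\ni x$ with compact closure and $\mi(V)=0=\mi(\partial V)$; then $\mi_j(V)\to 0$ with $\mi_j(V)\in \N$ forces $\mi_j(V)=0$ eventually, contradicting the eventual inclusion $x_j\in V\cap \supp{\mi_j}$. For the inclusion $\subseteq$, take $x\in \supp{\mi}$ together with a nested basis $V_1\supseteq V_2\supseteq \cdots$ of open neighbourhoods of $x$ of diameters tending to $0$, each with compact closure and $\mi$-null boundary. The first assertion gives $\mi(V_n)\in \N$, and $x\in \supp{\mi}$ forces $\mi(V_n)\Meg 1$. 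Portmanteau then produces indices $J_n\nearrow \infty$ with $\mi_j(V_n)\Meg 1$ for every $j\Meg J_n$, and a standard diagonal construction yields a sequence $x_j\in \supp{\mi_j}$ with $x_j\in V_n$ whenever $J_n\meg j<J_{n+1}$, hence $x_j\to x$.

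The genuinely non-formal step is the first assertion, where one must ensure that limit points $y_k$ landing on $\partial V$ do not contribute spurious mass to $\mi|_V$; this is resolved by restricting to test functions compactly supported in the open set $V$. Everything else is routine bookkeeping around the portmanteau characterization and the observation that integer-valued sequences convergent in $\R$ must stabilise.
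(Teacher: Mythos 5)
Your argument is correct in the situation the paper actually uses the lemma (a \emph{sequence} of \emph{positive} elements of $\cM_d(X)$), and your treatment of the support identity is essentially the paper's: both inclusions come down to the portmanteau property on continuity neighbourhoods together with the observation that an integer-valued convergent quantity must stabilise. Where you genuinely diverge is in the closedness assertion. The paper first establishes a set-theoretic characterization --- $\mi\in\cM_d(X)$ if and only if $\mi(U)\in\Z$ for every relatively compact open $U$ --- by a nested-ball argument that locates atoms of $\abs{\mi}$ as limits of Cauchy sequences in a complete metric, and then checks that this integrality property survives vague limits by shrinking to continuity sets $U_\delta$. You instead identify the limit measure directly: on a continuity neighbourhood $V$ the measures $\mi_j$ eventually consist of exactly $N=\mi(V)$ unit point masses, you extract convergent subsequences of these atoms, and you read off $\mi|_V$ as the sum of Dirac masses at those limit points lying in $V$. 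This is more concrete, avoids the auxiliary characterization, and delivers discreteness and integrality of the limit in one stroke.

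The price is generality, and it is worth naming. As defined (and as handled in the paper's proof, which works with $\abs{\mi}$ and runs over a filter), $\cM_d(X)$ contains \emph{signed} measures with integer atoms, and ``vaguely closed'' is a topological rather than sequential statement. Your decomposition $\mi_j|_V=\sum_{k=1}^{N}\delta_{y_{j,k}}$ with $N=\mi_j(V)$ is available only for positive $\mi_j$ (a signed element can satisfy $\mi_j(V)=0$ while carrying many cancelling atoms), and the subsequence extraction presupposes a sequence. The argument can be repaired for signed sequences --- local uniform boundedness of $\abs{\mi_j}$ follows from Banach--Steinhaus since $C_c(X)$ is barrelled, after which one tracks signed unit atoms --- but as written your proof establishes only sequential closedness of the positive part of $\cM_d(X)$. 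Since Corollaries~\ref{cor:6} and~\ref{cor:5} invoke the lemma exactly in that setting, this does not affect the paper's applications, but it does leave the first assertion, in the stated generality, not fully proved.
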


\begin{proof}
 Observe first that $\cM_d(X)$ is the set of Radon measures $\mi$ on $X$ such that $\mi(U)\in \Z$ for every relatively compact open subset $U$ of $X$. One implication in obvious. Conversely, take $\mi$ as above, and observe that $\mi(E)\in \Z$ and $\abs{\mi}(E)\in \N$ for every $\mi$-integrable subset of $X$.  Observe that $X$ is a Polish space, thanks to~\cite[Corollary to Proposition 16 of Chapter IX, \S\ 2, No.\ 10, and Corollary to Proposition 2 of Chapter IX, \S\ 6, No.\ 1]{BourbakiGT2}, so that we may endow $X$ with a complete metric $d$. Take a compact subset $K$ of $X$ such that $\abs{\mi}(K)>0$, and observe that for every $k\in \N$ there is a finite family $(x_{k,j})$ of elements of $X$ such that $K\subseteq B_X(x_{k,j},2^{-k})$, so that $\abs{\mi}(B_X(x_{k,j_k},2^{-k}))\Meg 1$ for some $j_k$. By induction, we may assume that $x_{k,j_k}\in B_X(x_{k',j_{k'}},2^{-k'}) $ for every $k'<k$. Thus,  $(x_{k,j_k})$ is a Cauchy sequence, so that it converges to some $x\in K$ with $\abs{\mi}(\Set{x})=\lim\limits_{k\to \infty}\abs{\mi}(B_X(x_{k,j_k},2^{-k}))\Meg 1$. In particular, there is $k\in \N$ such that $\abs{\mi}(B_X(x_{k',j_{k'}},2^{-k'}))=\abs{\mi}(\Set{x})$ for every $k'\Meg k$. Since $K$ was arbitrary, applying the same argument to $K\setminus B_X(x_{k,j_k},2^{-k})$ with $k$ as before, we infer that the restriction of $\abs{\mi}$ to $K$ belongs to $\cM_d(K)$. By the arbutrariness of $K$, this implies that $\abs{\mi}\in \cM_d(X)$, so that $\mi\in \cM_d(X)$.
 
 Now, let $\Ff$ be a filter on $\cM_d(X)$ converging vaguely to some $\mi$. Let $U$ be a relatively compact open subset of $X$, and take $\varphi\in C_c(X)$ so that $\varphi=1$ on $U$. Then,~\cite[Proposition 22 of Chapter IV, \S\ 5, No.\ 12]{BourbakiInt1}, applied to $\varphi\cdot \Fc$ and $\varphi\cdot \mi$, shows that $\mi'(U)\to \mi(U)\in \Z$, as $\mi'$ runs along $\Ff$, provided that $\mi(\partial U)=0$. In particular, observe that there is only a countable number of $\delta>0$ such that $\mi(\partial U_\delta)>0$, where $U_\delta\coloneqq \Set{x\in U\colon d(x, X\setminus U)>\delta}$, so that $\mi(U)=\lim\limits_{\delta\to 0^+ }\mi(U_\delta)\in \Z$. Thus, the preceding remarks imply that $\mi\in \cM_d(X)$.

 Now, let $(\mi_j)$ be a sequence of positive elements of $\cM_d(X)$ which converges vaguely to some $\mi$. Take $x\in \supp{\mi}$, take $r>0$ so that $B_X(x,2 r)$ is relatively compact and $\mi=\delta_x$ on $B_X(x,2 r)$, and observe that, for every $k\in \N$, $\mi_j(B_X(x,2^{-k}r))\to \mi(B_X(x,2^{-k}r))=\mi (\Set{x})>0$ by~\cite[Proposition 22 of Chapter IV, \S\ 5, No.\ 12]{BourbakiInt1} again, so that there is $j_k\in \N$ such that $x_{j,k}\in B_X(x,2^{-k})\cap\Supp{\mi_j}$ if $j\Meg j_k$.  Thus, we may find a sequence $(x'_j)\in \prod_j \supp{\mi_j}$ which converges to $x$.  Conversely, if there is a sequence $(x_j)\in \prod_j \supp{\mi_j}$ which converges to some $x$ in $X$, then clearly $\mi_j(B_X(x,\eps))\Meg 1$ for every $\eps>0$, if $j$ is sufficiently large, so that $\mi(B_X(x,\eps))\Meg 1$ for every $\eps>0$, whence $\mi(\Set{x})\Meg 1$ and $x\in \supp{\mi}$.
\end{proof}

\begin{proof}[Proof of Corollary~\ref{cor:6}.]
 Notice that the assumptions are weaker (while the conclusion is stronger) if we replace $K$ with its convex envelope. Therefore, we may assume that $K$ is convex.
 
 \textsc{Step I.} Assume first that $E=\Set{0}$, write $x_{j'}$ instead of $x_{0,j'}$ and let us prove that $\Set{x_{j'}\colon j'\in \N}$ is a set of uniqueness of $\Bc^\infty_{K}(F)$. Observe that, since $\Bc^\infty_{K-\lambda}(F)= \ee^{-i\langle \lambda_\C,\,\cdot\,\rangle}\Bc^\infty_K(F)$, we may assume that $\lambda=0$, that is, that the convex envelope of  $K\cup (-K)$ is a neighbourhood of $0$.  Then, the assertion follows from~\cite[Theorem 4.1]{OlevskiiUlanovskii} and Proposition~\ref{prop:4}.

 \textsc{Step II.} Let us prove that $\Set{(\zeta_j,x_{j,j'}+i\Phi(\zeta_j))\colon j\in \N^n,j'\in \N}$ is a set of uniqueness for $\Bc^\infty_{K}(\Nc)$. 
 Take $f\in \Bc^\infty_K(\Nc)$, and assume that $f(\zeta_j,x_{j,j'}+i\Phi(\zeta_j))=0$ for every $j\in \N^n$ and for every $j'\in \N$.  Observe first that, for every $j\in \N^n$, if we define $f^{(j)}\coloneqq f(\zeta_j,\,\cdot\,+i \Phi(\zeta_j))$, then $f^{(j)}\in \Bc^p_K(F)$, so that \textsc{step I} implies that $f^{(j)}=0$. Therefore, $f(\zeta_j,z)=0$ for every $z\in F_\C$ and for every $j\in J$. Observe that, since $m\meg \Delta \varphi_k\meg M$ on $\C$ for every $k=1,\dots, n$, there is $\eps>0$ such that
 \[
 \liminf_{R'\to \infty} \inf_{w\in \C}\frac{\card\Big(B_\C(w,R')\cap \Set{\zeta'_{k,j}\colon j\in \N}\Big)}{\int_{B_\C(w,R')} \Delta( \varphi_k+\eps \abs{\,\cdot\,}^2)(w')\,\dd w'}>\frac 2 \pi.
 \]
 Since $f\in \Bc^\infty_K(\Nc)$, this implies that $f(\,\cdot\,,z)$ belongs to the Fock space
 \[
 \Set{g\in \Hol(E)\colon  \int_{E} \abs{g(\zeta)} \ee^{- \sum_{k}( \varphi_k(\langle e'_k,\zeta\rangle)+\eps \abs{\langle e'_k,\zeta\rangle}^2)}\,\dd \zeta<\infty }
 \]
 and vanishes on $\Set{\zeta_j\colon j\in J}$ for every $z\in F_\C$,  so that~\cite[Theorem 1]{OS} (and a simple interation argument, cf.~\cite[Corollary 5.4]{MonguzziPelosoSalvatori}) implies that $f(\,\cdot\,,z)=0$ for every $z\in F_\C$, whence $f=0$.
 
 \textsc{Step III.} Now, define $\mi_{(0,0)}\coloneqq  \sum_{j,j'} \delta_{(\zeta_j,x_{j,j'}+i\Phi(\zeta_j))}$ and
 \[
 \mi_{(\zeta,x)}\coloneqq L_{(\zeta,x+i\Phi(\zeta))} \mi_{(0,0)}
 \]
 for every $(\zeta,x)\in \Nc$.  Observe that $\mi_{(0,0)}$ is a well defined Radon measure on $E\times F_\C$, and that $M_1(\mi_{(0,0)})$ is bounded.
 Take $\mi\in W(\mi_{(0,0)})$, and observe that there is a sequence $(\zeta'_k,x'_k)_{k\in \N}$ of elements of $\Nc$ such that $\mi_{(\zeta'_k,x'_k)}$ converges vaguely to $\mi$ (cf.~the proof of Lemma~\ref{lem:6}). 
 By Lemma~\ref{lem:4}, $\mi$ is a locally finite sum of Dirac deltas, and $\supp{\mi}$ is a suitable limit of $\supp{\mi_{(\zeta'_k,x'_k)}}$. It is not hard to see that $\supp{\mi}=\Set{(\zeta''_j,x''_{j,j'})\colon j\in \N^n, j'\in \N}$, with:
 \begin{itemize}
  \item for every $j\in \N^n$,  $(x_{j,j'})_{j'\in \N}$ is a sequence of elements of $F$ which is a finite union of uniformly separated sequences, and
  \[
  \sum_{j'\in \N}  (x''_{j,j'}+r'[(K_\eps-\lambda)\cup(\lambda-K_\eps)]^\circ)\supseteq F\setminus B(x_j,R)
  \]
  for some $x'''_j\in F$ for some $r'\in (r,\frac \pi 2)$ and some $\eps>0$;\footnote{ It suffices to fix $r'$ and observe that the convex envelope $H$ of $(K-\lambda) \cup (\lambda-K)$ contains $\frac{r}{r'} (H+B_{F'}(0,\eps)) $ for some small $\eps>0$, which contains the convex envelope of $ (K_\eps-\lambda) \cup (\lambda-K_\eps)$. }
  
  \item $\zeta''_j=\sum_{k} \zeta''_{k,j}e_k$, where, for every $k$, $(\zeta''_{k,j})_{j\in \N}$  is a finite union of uniformly separated sequences in $\C$ and  contains a uniformly separated sequence $(\zeta'''_{k,j})_{j\in \N}$ such that (if $\eps$ is sufficiently small)
  \[
  \liminf_{R'\to \infty} \inf_{w\in \C}\frac{\card\Big(B_\C(w,R')\cap \Set{\zeta'''_{k,j}\colon j\in \N}\Big)}{\int_{B_\C(w,R')} \Delta (\varphi_k+\eps\abs{\,\cdot\,}^2)(w')\,\dd w'}>\frac 2 \pi.
  \]
 \end{itemize}
 Take $C>0$ so that $\abs{\Phi(\zeta)}\meg C\sum_k \abs{\langle e'_k,\zeta\rangle}^2$ for every $\zeta\in E$, and observe that $H_{K_{\eps/C}}(-\Phi(\zeta))\meg H_K(-\Phi(\zeta))+\frac\eps C\abs{\Phi(\zeta)}^2\meg \sum_{k} (\varphi_k(\langle e'_k,\zeta\rangle)+\eps\abs{\langle e'_k,\zeta \rangle}^2)$ for every $\zeta\in E$.
 Therefore, \textsc{step II} implies that $\supp{\mi}$ is a set of uniqueness for $\Bc^\infty_{K_{\eps/C}}(\Nc)$, hence for $\Bc^p_{K_{\eps/C}}(\Nc)$. By the arbitrariness of $\mi$ (and using the fact that $\eps$ and $r'$ can be chosen independently of $\mi$), Theorem~\ref{teo:1} leads to the conclusion.
\end{proof}

With a similar (but simpler) argument one may also prove the following result.

\begin{cor}\label{cor:5}
 Take $p\in (0,\infty)$,  a compact subset $K$ of $\overline{\Lambda_+}$, $\delta>0$, $R>1$, and $\eps>0$. Assume that $d$ satisfies the following `convexity' assumption: if $(\zeta,x)\in \Nc$ and $d_\Nc((0,0),(\zeta,x))<1$, then there is $(\zeta',x')\in \Nc$ such that $d_\Nc((0,0),(\zeta',x')), d_\Nc((\zeta',x'),(\zeta,x))<\frac 1 2$.\footnote{  This is the case if $d_\Nc$ is defined as a left-invariant homogeneous control distance. In addition, if $n=0$, then $d_\Nc$ is the distance induced by a norm on $F$, so that this condition is authomatically satisfied for every choice of $d_\Nc$. } If the support of every $(\delta,R)$-lattice on $\Nc$ is a set of uniqueness for $L^q(\Nc)\cap\Oc_{K_\eps}(\Nc)$, then every $(\delta,R)$-lattice on $\Nc$ is strongly sampling for $L^p(\Nc)\cap\Oc_{K_\eps}(\Nc)$.
\end{cor}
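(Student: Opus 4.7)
The plan is to apply Theorem~\ref{teo:1} to the Radon measure
\[
\mi\coloneqq \sum_j \delta_{(\zeta_j,x_j+i\Phi(\zeta_j))}
\]
associated with an arbitrary $(\delta,R)$-lattice $(\zeta_j,x_j)_j$ on $\Nc$, after checking that (i) $\mi$ is a $p$-Carleson measure for $\Bc^p_{K_\eps}(\Nc)$ with $\rho(\supp{\mi})$ bounded, and (ii) the support of every element of $W(\mi)$ is itself a $(\delta,R)$-lattice on $\Nc$, hence by the hypothesis a set of uniqueness for $L^q(\Nc)\cap\Oc_{K_\eps}(\Nc)$, which under $f\mapsto f_0$ is identified with $\Bc^q_{K_\eps}(\Nc)$.

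The verification of (i) is straightforward: $\rho(\supp{\mi})=\Set{0}$, and the pairwise disjointness of the balls $B_\Nc((\zeta_j,x_j),\delta)$, together with the fact that $d$ dominates $d_\Nc$ on the fibres of $\rho$, provides an a priori bound on the number of lattice points in any unit $d$-ball. Thus $M_1(\mi)\in L^\infty(E\times F_\C)$, and Corollary~\ref{cor:1} (with $p=q$) then gives that $\mi$ is $p$-Carleson.

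For (ii), the key observation is that, since $\rho^{-1}(0)$ is a closed subgroup of $E\times F_\C$ preserved under left translations by its own elements, every member of the orbit $\Set{L_{(\zeta,x+i\Phi(\zeta))}\mi\colon (\zeta,x)\in \Nc}$ is a sum of Dirac deltas at the points of a $(\delta,R)$-lattice on $\Nc$. Lemma~\ref{lem:4} then ensures that every $\mi'\in W(\mi)$ is itself a discrete measure with integer masses, concentrated on $\rho^{-1}(0)$, whose support is the set of all limits of sequences of finite-level lattice points. Disjointness of the $\delta$-balls persists in the limit, since two limit points within $d_\Nc$-distance $\delta$ of each other would force the corresponding finite-level approximants to coincide. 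For the covering property, given $y\in \Nc$, each finite-level lattice meets the compact set $\overline B_\Nc(y,R\delta)$, and a convergent subsequence yields a point of $\supp{\mi'}$ within $d_\Nc$-distance $R\delta$ of $y$. The ``convexity'' hypothesis on $d_\Nc$ is precisely what is needed to ensure that this passage to the limit preserves the lattice constants $\delta$ and $R$ cleanly, without having to enlarge $R$.

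Once (i) and (ii) are in place, Theorem~\ref{teo:1} yields that $\mi$ is strongly $p$-sampling for $\Bc^p_K(\Nc)$ (after a mild adjustment of the $\eps$-parameter, since the theorem expects the uniqueness hypothesis on a slight enlargement of the Carleson $K$); translating back via $f\leftrightarrow f_0$ then gives the asserted strong sampling property for $L^p(\Nc)\cap\Oc_{K_\eps}(\Nc)$. The main technical obstacle is the structural claim in (ii), and specifically the preservation of the covering constant $R$ under vague limits of translates of $\mi$ — the step at which the convexity assumption on $d_\Nc$ intervenes in an essential way.
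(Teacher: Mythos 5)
Your proposal follows the paper's proof exactly in outline: form the lattice measure $\mi=\sum_j\delta_{(\zeta_j,x_j+i\Phi(\zeta_j))}$, note it is $p$-Carleson with $\rho(\supp\mi)=\{0\}$, use Lemma~\ref{lem:4} to identify the supports of the elements of $W(\mi)$ as limits of translated lattices, and feed the resulting uniqueness property into Theorem~\ref{teo:1}. There is, however, one step whose justification as written does not work, and it is precisely the step where the convexity hypothesis must intervene. You claim that ``disjointness of the $\delta$-balls persists in the limit, since two limit points within $d_\Nc$-distance $\delta$ of each other would force the corresponding finite-level approximants to coincide.'' This argument only yields that distinct points of $\supp{\mi'}$ are at $d_\Nc$-distance $\Meg\delta$ from one another (disjointness of the open balls $B_\Nc(a,\delta)$ and $B_\Nc(b,\delta)$ always forces $d_\Nc(a,b)\Meg\delta$, and this closed condition passes to the limit); but $\delta$-separation of the limit points does \emph{not} imply that the $\delta$-balls centred at them are pairwise disjoint, which is what is needed for $\supp{\mi'}$ to be a $(\delta,R)$-lattice and hence for the uniqueness hypothesis to apply to it. The role of the convexity assumption is exactly to repair this: by homogeneity it gives, for any $a,b$ with $d_\Nc(a,b)<2\delta$, a point $c$ with $d_\Nc(a,c),d_\Nc(c,b)<\delta$, so that $B_\Nc(a,\delta)\cap B_\Nc(b,\delta)\neq\emptyset$; hence disjointness of the $\delta$-balls is \emph{equivalent} to the closed condition $d_\Nc(a,b)\Meg 2\delta$, which survives vague limits. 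By contrast, you attribute the convexity assumption to the preservation of the covering radius $R$, but that part needs no convexity at all: for each $y\in\Nc$ every translated lattice meets the compact set $\overline B_\Nc(y,R\delta)$, and a convergent subsequence of such points lands in $\supp{\mi'}\cap\overline B_\Nc(y,R\delta)$ by Lemma~\ref{lem:4}, exactly as you say. So the architecture of your proof is the paper's, but the separation step should be rewritten with the $2\delta$-equivalence above in place of the coincidence argument.
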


\begin{proof}
 The assertion follows from Theorem~\ref{teo:1} and Lemma~\ref{lem:4}, since the condition imposed of $d_\Nc$ guarantees that $B((\zeta,x),\delta)\cap B((\zeta',x'),\delta)=\emptyset$ if and only if $d_\Nc((\zeta,x),(\zeta',x'))\meg 2\delta$.
\end{proof}

\begin{prop}\label{prop:2}
 Take $p\in (0,\infty)$, a compact subset $K$ of $F'$, $\eps>0$, and $\mi \in \cM_+(E\times F_\C)$ with $\rho(\supp{\mi})$ bounded.  Then, the following hold:
 \begin{enumerate}
  \item[\textnormal{(1)}] if $\mi$ is a $p$-sampling measure for $\Bc^p_{K_\eps}(\Nc)$, then the canonical inclusion $\Bc^\infty_K(\Nc)\to L^\infty(\mi)$ is an isomorphism onto its image and $\Bc^\infty_K(\Nc)=\Set{f\in \Hol_K(E\times F_\C)\colon f\in L^\infty(\mi)}$;
  
  \item[\textnormal{(2)}] if $M_K(\mi)$ is bounded, $\mi$ is discrete and $\inf_{\mi(\Set{(\zeta,z)})>0} \mi(\Set{(\zeta,z)})>0$, and the canonical mapping $\Bc^\infty_{K_\eps}(\Nc)\to L^\infty(\mi)$ is an isomorphism onto its image, then $\mi$ is a strongly $p$-sampling measure for $\Bc^p_K(\Nc)$.
 \end{enumerate}  
\end{prop}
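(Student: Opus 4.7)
The overall strategy is a multiplication trick. Fix once and for all a function $g\in\Bc^p_{K'}(\Nc)$ with $K'\coloneqq\overline{B_{F'}(0,\eps)}\cap\overline{\Lambda_+}$, $g_0\in\Sc(\Nc)$ and $g(0,0)\neq 0$ (available by~\cite[Theorem 4.2 and Proposition 5.2]{PWS}). Given a target $f$ and a basepoint $(\zeta_0,x_0)\in\Nc$, form $F\coloneqq f\cdot L_{(\zeta_0,x_0+i\Phi(\zeta_0))}g$. Since $(L_{(\zeta_0,x_0+i\Phi(\zeta_0))}g)_0$ is a left-$\Nc$-translate of the Schwartz function $g_0$, the product $F_0$ enjoys good integrability whether $f_0\in L^\infty$, $f_0\in L^p$, or $f_0\in\Oc_K(\Nc)$ merely with polynomial growth. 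The second running ingredient, exploited in~(2), is the uniform estimate
\[
\int_\Nc\abs{L_{(\zeta',x'+i\Phi(\zeta'))}g(\zeta_k,z_k)}^p\,\dd(\zeta',x')=\norm{g_{h_k}}_{L^p(\Nc)}^p\meg\ee^{p H_{K'}(h_k)}\norm{g_0}_{L^p(\Nc)}^p,\qquad h_k\coloneqq\rho(\zeta_k,z_k),
\]
which is uniformly bounded in $k$ because $\rho(\supp{\mi})$ is bounded.

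For~(1), the inclusion $\Bc^\infty_K(\Nc)\hookrightarrow L^\infty(\mi)$ is immediate from~\cite[Theorem 1.7]{Bernstein} and the boundedness of $\rho(\supp{\mi})$. For the reverse direction a check shows $F\in\Bc^p_{K_\eps}(\Nc)$ in both cases of interest: for $f\in\Bc^\infty_K$, $F_0=f_0\cdot(L_{(\zeta_0,x_0+i\Phi(\zeta_0))}g)_0$ lies in $L^\infty\cdot L^p\subseteq L^p$; for $f\in\Hol_K(E\times F_\C)$ with $f\in L^\infty(\mi)$, the polynomial growth of $f_0\in\Oc_K(\Nc)$ against the Schwartz decay of $(L_{(\zeta_0,x_0+i\Phi(\zeta_0))}g)_0$ gives $F_0\in\Sc(\Nc)\subseteq L^p(\Nc)$. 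The $p$-sampling hypothesis, together with a Carleson estimate coming from Propositions~\ref{prop:12} and~\ref{prop:11bis} and the left-invariance of the $\Nc$-Haar measure, yields
\[
\norm{F_0}_{L^p(\Nc)}\meg C\norm{F}_{L^p(\mi)}\meg C\norm{f}_{L^\infty(\mi)}\norm{L_{(\zeta_0,x_0+i\Phi(\zeta_0))}g}_{L^p(\mi)}\meg C_1\norm{f}_{L^\infty(\mi)}\norm{g_0}_{L^p(\Nc)}.
\]
On the other side, plurisubharmonicity of $\abs{F}^p$ on $E\times F_\C$, Fubini, and the slice bound $\norm{F_h}_{L^p(\Nc)}\meg\ee^{H_{K_\eps}(h)}\norm{F_0}_{L^p(\Nc)}$ from~\cite[Theorem 1.7]{Bernstein} produce
\[
\abs{f_0(\zeta_0,x_0)}^p\abs{g(0,0)}^p=\abs{F(\zeta_0,x_0+i\Phi(\zeta_0))}^p\meg C_2\norm{F_0}_{L^p(\Nc)}^p
\]
uniformly in $(\zeta_0,x_0)\in\Nc$. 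Combining simultaneously proves the isomorphism claim and the set equality.

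For~(2), the plan is to upgrade the $\infty$-sampling hypothesis for $\Bc^\infty_{K_\eps}(\Nc)$ to $p$-sampling for $\Bc^p_{K_{\eps/2}}(\Nc)$ and then to invoke Lemma~\ref{lem:2} (with $\eps$ replaced by $\eps/2$) to obtain strong $p$-sampling for $\Bc^p_K(\Nc)$. The $p$-Carleson property on $\Bc^p_{K_{\eps/2}}(\Nc)$ follows from the boundedness of $M_K(\mi)$ (equivalently, of $M_1(\mi)$, since $\rho(\supp{\mi})$ is bounded) via Corollary~\ref{cor:1}. For the sampling lower bound, repeat the construction with $g\in\Bc^p_{\overline{B_{F'}(0,\eps/2)}\cap\overline{\Lambda_+}}(\Nc)$ and $g(0,0)=1$. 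For $f\in\Bc^p_{K_{\eps/2}}(\Nc)\subseteq\Bc^\infty_{K_{\eps/2}}(\Nc)$ (using the standard Plancherel--P\'olya-type embedding $\Bc^p_K\hookrightarrow\Bc^\infty_K$) and $(\zeta',x')\in\Nc$, $F\coloneqq f\cdot L_{(\zeta',x'+i\Phi(\zeta'))}g\in\Bc^\infty_{K_\eps}(\Nc)$, and the $\infty$-sampling hypothesis evaluated at $(\zeta',x'+i\Phi(\zeta'))$ yields
\[
\abs{f_0(\zeta',x')}=\abs{F(\zeta',x'+i\Phi(\zeta'))}\meg\norm{F_0}_{L^\infty(\Nc)}\meg C\sup_k\abs{f(\zeta_k,z_k)}\,\abs{L_{(\zeta',x'+i\Phi(\zeta'))}g(\zeta_k,z_k)}.
\]
Raising to the $p$-th power, dominating the supremum by a sum, integrating over $(\zeta',x')\in\Nc$, and substituting the uniform integral bound from paragraph one produces
\[
\norm{f_0}_{L^p(\Nc)}^p\meg C'\sum_k\abs{f(\zeta_k,z_k)}^p\meg\frac{C'}{c_0}\norm{f}_{L^p(\mi)}^p,
\]
where $c_0\coloneqq\inf_{j}\mi(\Set{(\zeta_j,z_j)})>0$ is the atomic lower bound.

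The main technical obstacle is the identification of $\int_\Nc\abs{L_{(\zeta',x'+i\Phi(\zeta'))}g(\zeta_k,z_k)}^p\,\dd(\zeta',x')$ with $\norm{g_{h_k}}_{L^p(\Nc)}^p$: one must check that as $(\zeta',x')$ ranges over $\Nc$, the point $(\zeta',x'+i\Phi(\zeta'))^{-1}(\zeta_k,z_k)$ parametrizes the right coset $\Nc\cdot(0,ih_k)$ bijectively and in a measure-preserving way. This is a careful unwinding of the non-commutative group law on $E\times F_\C$ restricted to the subgroup $\Nc$ via the identification $(\zeta,x)\leftrightarrow(\zeta,x+i\Phi(\zeta))$; once this is in hand, the remainder of the argument is a matter of assembling already-proved ingredients.
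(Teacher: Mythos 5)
Your proposal is correct and follows essentially the same route as the paper: both parts hinge on the multiplication trick $f\mapsto f\cdot L_{(\zeta,x+i\Phi(\zeta))}g$ with $g$ in a Bernstein space over $\overline B_{F'}(0,\eps)\cap\overline{\Lambda_+}$, combined with the Carleson estimates of Propositions~\ref{prop:11bis} and~\ref{prop:12}, the Fubini/coset identity $\int_\Nc\abs{L_{(\zeta',x'+i\Phi(\zeta'))}g(\zeta_k,z_k)}^p\,\dd(\zeta',x')=\norm{g_{\rho(\zeta_k,z_k)}}_{L^p(\Nc)}^p$, and Lemma~\ref{lem:2} (applied after the $\eps/2$ reduction, which the paper leaves implicit). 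The only cosmetic difference is that you prove the lower bound in~(1) by a direct uniform pointwise estimate at an arbitrary basepoint, whereas the paper runs the same estimate as a contradiction argument on a minimizing sequence.
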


This result extends  (i) and (ii) of~\cite[Theorem 2.1]{OlevskiiUlanovskii}.

\begin{proof}
 (1) Assume by contradiction that the canonical mapping  $\Bc^\infty_K(\Nc)\to L^\infty(\mi)$ is not an isomorphism onto its image. Then, there is a sequence $(f^{(j)})$ of elements of $\Bc^\infty_K(\Nc)$ such that $\norm{f^{(j)}_0}_{L^\infty(\Nc)}=1$ and  $\norm{f^{(j)}}_{L^\infty(\mi)}\meg 2^{-j}$ for every $j\in \N$. In particular, for every $j\in \N$ there is $(\zeta_j,x_j)\in \Nc$ such that $\abs{f^{(j)}_0(\zeta_j,x_j)}\Meg \frac 1 2$. 
 Fix $\varphi \in \Bc^p_{\overline B_{F'}(0,\eps)\cap \overline{\Lambda_+}}(\Nc)$ so that $\varphi(0,0)=1$, and define $g^{(j)}\coloneqq f^{(j)} L_{(\zeta_j,x_k+i\Phi(\zeta_j))} \varphi$ for every $j\in \N$. Then, $g^{(j)}\in \Bc^p_{K_\eps}(\Nc)$, 
 \[
 \norm{g^{(j)}}_{L^p(\mi)}\meg 2^{-j}\norm{L_{(\zeta_j,x_k+i\Phi(\zeta_j))} \varphi}_{L^p(\mi)}\meg C2^{-j}\norm{\varphi_0}_{L^p(\Nc)},
 \]
 where $C$ is the norm of the continuous inclusion $\Bc^p_{K_\eps}(\Nc) \subseteq L^p(\mi)$, for every $j\in \N$. Furthermore, since $\Bc^p_{K_\eps}(\Nc)$ embeds continuously into $\Bc^\infty_{K_\eps}(\Nc) $ (cf.~\cite[Theorem 3.2]{Bernstein}), there is a constant $C'>0$ such that
 \[
 \norm{g^{(j)}_0}_{L^p(\Nc)}\Meg C' \norm{g^{(j)}_0}_{L^\infty(\Nc)}\Meg C'\abs{f^{(j)}_0(\zeta_j,x_j)}\Meg \frac{C'}{2} 
 \]
 for every $j\in \N$: contradiction. The second part is proved as Lemma~\ref{lem:2}.
 
 (2) By Proposition~\ref{prop:11bis} we know that $\mi$ is a $p$-Carleson measure for $\Bc^p_K(\Nc)$. Take $g \in    \Bc^p_{\overline B_{F'}(0,\eps)\cap \overline{\Lambda_+}}(\Nc)$ so that $g(0,0)=1$.
 In addition, by assumption there is a constant $C'''>0$ such that
 \[
 \norm{f_0}_{L^\infty(\Nc)}\meg C''\norm{f}_{L^\infty(\mi)}
 \] 
 for every $f\in  \Bc^\infty_{K_\eps}(\Nc)$. Define $C'''\coloneqq \inf_{\mi(\Set{(\zeta,z)})>0} \mi(\Set{(\zeta,z)})^{1/p}$.
 Then, for every $f\in \Bc^p_K(\Nc)$,
 \[
 \begin{split}
  \norm{f_0}_{L^p(\Nc)}&\meg \norm*{(\zeta,x)\mapsto \norm{f_0 L_{(\zeta,x)}g_0}_{L^\infty(\Nc)} }_{L^p(\Nc)}\\
  &\meg C'' \norm*{(\zeta,x)\mapsto \norm{f L_{(\zeta,x+i\Phi(\zeta))}g}_{L^\infty(\mi)} }_{L^p(\Nc)}\\
  &\meg C'' C'''\norm*{(\zeta,x)\mapsto \norm{f L_{(\zeta,x+i\Phi(\zeta))}g}_{L^p(\mi)} }_{L^p(\Nc)}\\
  &\meg C'' C'''  \norm*{(\zeta',z')\mapsto f(\zeta',z')\norm{g_{\rho(\zeta',z')}}_{L^p(\Nc)}}_{L^p(\mi)}\\
  &\meg  C'' C''' \norm{g_0}_{L^p(\Nc)}  \sup_{h\in \rho(\supp{\mi})}\ee^{\eps \abs{h}}\norm{ f}_{L^p(\mi)}
 \end{split}
 \]
 where the second inequality follows from the fact that $ f L_{(\zeta,x+i\Phi(\zeta))}g\in \Bc^\infty_{K_\eps}(\Nc)$ for every $(\zeta,x)\in \Nc$, while the last inequality follows from~\cite[Theorem 1.7]{Bernstein}. Thus, $\mi$ is a $p$-sampling measure for $\Bc^p_K(\Nc)$.  The conclusion follows by means of Lemma~\ref{lem:2}.
\end{proof}

We now show how the general Beurling-type necessary conditions for sampling sequences proved in~\cite{Romeroetal} look like in this setting. Here, for every $\lambda\in \overline{\Lambda_+}$ we denote by $\abs{\Pfaff(\lambda)}$ the (complex) determinant of the positive hermitian form $\langle \lambda, \Phi\rangle$ with respect to the scalar product of $E$.

\begin{prop}\label{prop:3}
 Take a compact subset $K$ of $\overline{\Lambda_+} $, and let $S$ be a locally finite subset of $\Nc$ such that $\mi\coloneqq \sum_{(\zeta,x)\in S} \delta_{(\zeta,x+i\Phi(\zeta))}$ is a sampling (Radon) measure for $\Bc^2_K(\Nc)$. Then, 
 \[
 \liminf_{R\to +\infty} \inf_{(\zeta,x)\in \Nc} \frac{\mi(B_\Nc((\zeta,x),R))}{\Hc^{2n+m}(B_\Nc((\zeta,x),R))}\Meg \frac{2^{n-m}}{\pi^{n+m}} \int_K \abs{\Pfaff(\lambda)}\,\dd \lambda.
 \]
\end{prop}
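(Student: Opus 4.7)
The plan is to invoke the general Beurling-type necessary condition for sampling measures in translation-invariant reproducing kernel Hilbert spaces proved in~\cite{Romeroetal}, after identifying $\Bc^2_K(\Nc)$ with a left-invariant closed subspace of $L^2(\Nc)$ and computing the diagonal value of its reproducing kernel. The argument splits into three steps.

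\textbf{Step 1 (RKHS structure and translation invariance).} The map $f\mapsto f_0$ isometrically identifies $\Bc^2_K(\Nc)$ with a closed subspace $H$ of $L^2(\Nc)$. Proposition~\ref{prop:11bis}, applied with $q=\infty$ and $\mi=\delta_{(\zeta,z)}$ (or a compactly supported $C^\infty$ approximation), shows that point evaluations are bounded on $\Bc^2_K(\Nc)$, so $H$ is a reproducing kernel Hilbert space. The left action of $\Nc$ on $E\times F_\C$ preserves $\Bc^2_K(\Nc)$ and acts isometrically, so the diagonal value $c_K\coloneqq k_{(\zeta,x+i\Phi(\zeta))}\bigl((\zeta,x+i\Phi(\zeta))\bigr)$ of the reproducing kernel is independent of $(\zeta,x)\in\Nc$. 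The measure $\mi$, supported on $\cM$, is viewed as a measure on $\Nc$ via the canonical identification.

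\textbf{Step 2 (Computation of $c_K$).} Next I would compute $c_K$ via the Plancherel decomposition of the regular representation of the $2$-step nilpotent Lie group $\Nc$. Writing $\pi_\lambda$ for the Kirillov representation of $\Nc$ attached to $\lambda\in\Lambda_+$, the Plancherel formula takes the form
\[
\norm{f}_{L^2(\Nc)}^2 = \frac{2^{n-m}}{\pi^{n+m}} \int_{\Lambda_+} \norm{\pi_\lambda(f)}_{\mathrm{HS}}^2 \,\abs{\Pfaff(\lambda)}\,\dd\lambda,
\]
where the constant $2^{n-m}/\pi^{n+m}$ comes from the normalization of Haar and Plancherel measures fixed throughout the paper. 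The Paley--Wiener theorem of~\cite{PWS} further identifies the Fourier--Plancherel image of $\Oc_K(\Nc)\cap L^2(\Nc)$ with the direct integral over $\lambda\in K$ of rank-one operators of the form $v_\lambda\otimes\xi$, where $v_\lambda$ is the distinguished unit ``CR vacuum'' vector in the representation space of $\pi_\lambda$ (determined by the CR holomorphy condition) and $\xi$ is arbitrary. Unwinding this identification and evaluating the reproducing kernel on the diagonal yields
\[
c_K = \frac{2^{n-m}}{\pi^{n+m}}\int_K \abs{\Pfaff(\lambda)}\,\dd\lambda.
\]

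\textbf{Step 3 (Beurling lower bound).} Finally, with $\Bc^2_K(\Nc)$ realized as a translation-invariant RKHS on the homogeneous space $\Nc$, the general theorem of~\cite{Romeroetal} applies and gives that every sampling (Radon) measure $\mi$ satisfies
\[
\liminf_{R\to+\infty}\inf_{(\zeta,x)\in\Nc} \frac{\mi(B_\Nc((\zeta,x),R))}{\Hc^{2n+m}(B_\Nc((\zeta,x),R))} \Meg c_K,
\]
which is exactly the claimed inequality.

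The main obstacle I expect is Step~2: pinning down the precise Paley--Wiener identification of $\Oc_K(\Nc)\cap L^2(\Nc)$ as rank-one operators in each fibre of the Plancherel decomposition, and tracking the normalization constants to recover the factor $2^{n-m}/\pi^{n+m}$. Step~1 is essentially a repackaging of Proposition~\ref{prop:11bis} together with the obvious left-invariance, and Step~3 is a direct application of the abstract result of~\cite{Romeroetal}.
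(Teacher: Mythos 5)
Your proposal is correct and follows essentially the same route as the paper: both reduce to the abstract Beurling-type density theorem of~\cite{Romeroetal} applied to the translation-invariant reproducing kernel Hilbert space $\Oc^2_K(\Nc)$ on $\Nc$, with the right-hand side being the (constant) diagonal value of the reproducing kernel, which the paper obtains by citing~\cite[Proposition 5.1]{Bernstein} rather than recomputing it via the group Plancherel formula as in your Step~2. The only point you gloss over is the verification of the last hypothesis of~\cite[Section 2.1 (B)]{Romeroetal} (the weak localization of the kernel), which the paper deduces from Proposition~\ref{prop:11bis} and Corollary~\ref{cor:1}.
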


\begin{proof}
 We may assume that $\Hc^m(K)>0$, that is, $\Bc^2_K(\Nc)\neq \Set{0}$. 
 Then, the result will follow from~\cite[Theorem 2.2]{Romeroetal} and~\cite[Proposition 5.1]{Bernstein} (extended to the case in which $K$ is not necessarily convex), the former applied with $X=\Nc$, $\mi=\Hc^{2n+n}$, and $\Hc=\Oc^2_K(\Nc)$, once we show that the assumptions of~\cite[Section 2.1]{Romeroetal}  are satisfied. The assumption of~\cite[Section 2.1 (A)]{Romeroetal} are clearly satisfied, since the distance $d_\Nc$ is continuous and $\Hc^{2n+m}(B_\Nc((\zeta,x),R))=C R^{2n+2m}$ for every $(\zeta,x)\in \Nc$ and for every $R>0$, where $C=\Hc^{2n+m}(B_\Nc((0,0),1))$.  Since the reproducing kernel $k$ of $\Hc$ satisfies $k_{(\zeta,x)}=L_{(\zeta,x)} k_0$, also the first two assumptions of~\cite[Section 2.1 (B)]{Romeroetal} are clear. Finally, the third assmption of~\cite[Section 2.1 (B)]{Romeroetal} follows from the second one, Proposition~\ref{prop:11bis}, and Corollary~\ref{cor:1}.
\end{proof}

\end{document}